\newtheorem{Theorem}{Theorem}[section]
\newtheorem{lemma}{Lemma}[section]
\newtheorem{corollary}{Corollary}[section]
\begin{document}
\title{ An upper bound on the nullity of signed graphs\footnote{This work was supported by the National Nature Science Foundation of China (Nos.11871040).}}

\author{\centerline{ Keming Liu \footnote{Corresponding author. 
			E-mail address: 1358365098@qq.com(Keming Liu).}
		$\ \ $ Xiying Yuan\footnote{
			E-mail address: xiyingyuan@shu.edu.cn(Xiying Yuan).} }\\[2mm]	
\centerline{\normalsize  Department of Mathematics, Shanghai University, Shanghai 200444, P. R. China.}}
\date{}
\maketitle
{\bf Abstract:}
In this paper, an upper bound on the nullity of signed graphs in terms of  the cyclomatic number and the number of pendant vertices is proved, and the corresponding extremal signed graphs are completely characterized.

{\bf Keywords:}
Signed graph; cyclomatic number; pendant vertices; nullity.
\section{Introduction}
All graphs considered in this paper are simple and finite. Let $G=(V(G), E(G))$ be an undirected graph with $V(G)=\left\{v_{1}, v_{2}, \cdots, v_{n}\right\}$ be the vertex set and $E(G)$ be the edge set. We write $x \sim y$ to mean two vertices $x$ and $y$ of $G$ are adjacent. For a vertex $u \in V(G)$, the degree of $u$, denoted by $d_{G}(u)$, is the number of vertices which are adjacent to $u$. A vertex of $G$ is called a pendant vertex (or a leaf) if it is a vertex of degree one in $G$. The number of leaves in $ G $ is denoted by $ p(G) $. Denote by $ P_{n} $ and $C_{n}$ a path and cycle on $n$ vertices, respectively. 
The adjacency matrix $A(G)$ of $G$ is an $n \times n$ matrix whose $(i, j)$-entry equals to $ 1 $ if $v_{i} \sim v_{j}$, and 0 otherwise.  

A signed graph $\varGamma=(G, \sigma)$ consists of a simple graph $G=(V(G), E(G))$, referred to as its underlying graph, and a mapping $\sigma$: $ E(G) \rightarrow\{+,-\}$, its edge labelling. The adjacency matrix of $\varGamma$ is $A(\varGamma)=\left(a_{i j}^{\sigma}\right)$ with $a_{i j}^{\sigma}=\sigma\left(v_{i} v_{j}\right) a_{i j}$, where $\left(a_{i j}\right)$ is the adjacency matrix of the underlying graph $G$. An edge $e$ is said to be positive or negative if $\sigma\left(e\right)=+$ or $\sigma\left(e\right)=-$, respectively. And a simple graph can always be viewed as a singed graph with all positive edges. Let $C$ be a cycle of $\varGamma$, the sign of $C$ is defined by $\sigma(C)=\prod_{e \in C} \sigma(e) $.  A cycle $C$ is said to be positive or negative if $\sigma(C)=+$ or $\sigma(C)=-$, respectively. The rank and nullity of a signed graph $\varGamma$ are  defined to be the rank and the multiplicity of the zero eigenvalues of its adjacency matrix, respectively, written as $r(\varGamma)$ and $ \eta(\varGamma) $, respectively.
For $\lambda \in \mathbb{R}$, we denote by $m(\varGamma, \lambda)$ the algebraic multiplicity of $\lambda$ in the adjacency spectrum of $\varGamma$. If $\lambda$ is not an eigenvalue of $\varGamma$, we put $m(\varGamma, \lambda)=0$. Obvious $ r(\varGamma) + \eta(\varGamma) = |V(\varGamma)| $. 

Let $ \varGamma $ be a signed graph, $ U \subset V(\varGamma) $, $ \varGamma^{U} $  be the signed graph obtained from $ \varGamma $ by reversing the sign of each edge between a vertex in $ U $ and a vertex in $ V(\varGamma) \setminus U $, and $\Gamma-U$ be the signed graph obtained from $ \varGamma $ by removing the vertices of $U$ together with all edges incident to them.
A block of a graph is a maximal subgraph without any cut-vertex. 

The nullity is a classical topic in spectrum theory of graphs. About the nullity of simple graphs and its applications, there are many known results, see [5], [8], [10], [11], [12], [13], [14], [17] and [19] for details.
For signed graph, Liu and You in [18] investigated the nullity of the bicyclic signed, obtained the nullity set of unbalanced bicyclic signed graphs, and determined the nullity set of bicyclic signed graphs. Fan characterized several classes of signed graphs according to nullity in [7]. 
%unicyclic signed graphs of order $ n $ with nullity
 %$ n-2,\  n-3,\  n-4,\  n-5 $  respectively in [7], and characterized the signed graphs of order $ n $ with nullity $ n-2 $ or $ n-3 $ and determined the unbalanced bicyclic signed graphs of order $ n $ with nullity $ n-3 $ or $ n-4 $, and signed bicyclic signed graphs of order $ n $ with nullity $ n-5 $.
In addition, more results of signed graphs and their applications, see [20], [24] and  [25]. Let $c(G)$ be the cyclomatic number of a graph $G$, that is $c(G)=|E(G)|-|V(G)|+\omega(G)$, where $\omega(G)$ is the number of connected components of $G$. 
For a signed graph $\Gamma$,  the  cyclomatic number and the number of pendant vertices of $\Gamma$ are defined to be the  cyclomatic number and the number of pandent vertices of its underlying graph, respectively. An upper bound on the nullity of graph in terms of $ c(G) $ and $ p(G) $ was proved in [23]. Moreover, the extremal graphs have been characterized in [6], [15] and [23]. 
In this paper, we get an upper bound of the nullity of signed graph in terms of $ c(\Gamma) $ and $ p(\Gamma) $(See Theorem $ 3.1 $), and characterize the corresponding extremal signed graphs( See Section $ 5 $). 

\section{Tools for the nullity of the signed graph}
In this section, useful tools to study the nullity of the signed graph will be listed.
\begin{lemma}\cite{A3} Let $(P_{n}, \sigma)$ be a signed path. Then $ \eta(P_{n}, \sigma)=1 $  if  $ n $  is odd, and $ \eta(P_{n}, \sigma)=0 $  if  $ n $  is even.
\end{lemma}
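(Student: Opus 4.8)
The plan is to reduce to the ordinary (all-positive) path by a switching argument and then read off the answer from the classical spectrum of $P_n$. First I would use the fact that $P_n$, being a tree, contains no cycle, so $(P_n,\sigma)$ is vacuously balanced. Concretely, label the vertices $v_1,v_2,\dots,v_n$ consecutively along the path and set
$U=\{\,v_i:\prod_{j=1}^{i-1}\sigma(v_jv_{j+1})=-1\,\}$
(equivalently, introduce a switching function $\theta$ with $\theta(v_1)=1$ and $\theta(v_{i+1})=\theta(v_i)\,\sigma(v_iv_{i+1})$, and let $U$ be the set of vertices on which $\theta=-1$). A direct check on each edge shows that in $(P_n,\sigma)^{U}$ every edge becomes positive, so $(P_n,\sigma)^{U}$ is the all-positive path $P_n$.

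Next I would observe that the switching operation $\Gamma\mapsto\Gamma^{U}$ realizes the similarity $A(\Gamma^{U})=DA(\Gamma)D$, where $D=\mathrm{diag}(d_1,\dots,d_n)$ with $d_i=-1$ for $v_i\in U$ and $d_i=+1$ otherwise (note $D^2=I$). Since similar matrices share the same spectrum, $A(P_n,\sigma)$ and $A(P_n)$ have identical eigenvalues; in particular $\eta(P_n,\sigma)=\eta(P_n)=m(P_n,0)$. Thus the signed statement follows once the unsigned one is established.

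Finally I would compute $m(P_n,0)$. The eigenvalues of $P_n$ are $2\cos\frac{k\pi}{n+1}$ for $k=1,\dots,n$, and they are pairwise distinct; the value $0$ occurs among them exactly when $\frac{k\pi}{n+1}=\frac{\pi}{2}$ has an integer solution, i.e. when $n+1$ is even, that is, when $n$ is odd, and then only for the single index $k=\frac{n+1}{2}$. Hence $\eta(P_n,\sigma)=1$ when $n$ is odd and $\eta(P_n,\sigma)=0$ when $n$ is even. (If one prefers to avoid quoting the spectrum, the characteristic polynomials obey $\phi(P_n,x)=x\,\phi(P_{n-1},x)-\phi(P_{n-2},x)$, whence $\phi(P_n,0)=-\phi(P_{n-2},0)$ with $\phi(P_0,0)=1$ and $\phi(P_1,0)=0$, giving $\phi(P_{2m},0)=(-1)^m\neq 0$ and $\phi(P_{2m+1},0)=0$; simplicity of the root $0$, when present, follows because the adjacency matrix of a path is a Jacobi matrix, i.e. tridiagonal with nonzero off-diagonal entries, and such matrices have simple spectrum.)

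The argument is short, and the only point deserving attention is the multiplicity assertion: showing that $0$, when it is an eigenvalue, has multiplicity exactly one (not just that it is a root of the characteristic polynomial). This is why the proof leans on the simplicity of the eigenvalues of $P_n$ rather than merely on the vanishing of $\phi(P_n,0)$.
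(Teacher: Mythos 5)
Your proof is correct and complete. The paper itself offers no argument for this lemma --- it is quoted from the literature (Cvetkovi\'c--Gutman) --- so there is no in-text proof to compare against; what you have written is a legitimate self-contained justification. Your route (switch the tree to the all-positive path via $D A(\Gamma) D$ with $D^2=I$, then invoke the explicit eigenvalues $2\cos\frac{k\pi}{n+1}$ of $P_n$, which are simple) is one of the two standard arguments; the other, slightly more combinatorial one uses the identity $\eta(T)=|V(T)|-2\mu(T)$ for (signed) forests together with $\mu(P_n)=\lfloor n/2\rfloor$, and has the advantage of not requiring any spectral computation. Your care in addressing the multiplicity --- either through the distinctness of the $n$ cosine values or through the Jacobi-matrix observation --- is exactly the point that a na\"{\i}ve appeal to $\phi(P_n,0)=0$ would miss, and the switching step correctly uses that a tree is vacuously balanced, so the construction $\theta(v_{i+1})=\theta(v_i)\sigma(v_iv_{i+1})$ always succeeds. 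No gaps.
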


\begin{lemma} \cite{A7}
	 Let $\left(C_{n}, \sigma\right)$ be a signed cycle. Then $\eta\left(C_{n}, \sigma\right)=2$ if and only if  $\left(C_{n}, \sigma\right)$ is posotive and $n \equiv 0 (mod\ 4)$ or $\left(C_{n}, \sigma\right)$ is negative and $n \equiv 2 (mod\ 4)$, $\eta\left(C_{n}, \sigma\right)=0$ otherwise.
\end{lemma}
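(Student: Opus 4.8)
The plan is first to reduce to two canonical signed cycles via a switching argument, and then to read off the nullity from explicit eigenvalue formulas. Recall that for any $U\subseteq V(\varGamma)$ one has $A(\varGamma^{U})=D_{U}A(\varGamma)D_{U}$, where $D_{U}$ is the diagonal $\{\pm1\}$-matrix with $-1$'s exactly in the positions of $U$; since $D_{U}^{2}=I$ this is a similarity, so $\varGamma$ and $\varGamma^{U}$ have the same spectrum, in particular $\eta(\varGamma^{U})=\eta(\varGamma)$. Now take a spanning tree of $C_{n}$, i.e.\ the path obtained by deleting one edge $e_{0}$; as any signed tree is switching-equivalent to the all-positive tree, we may switch so that every edge of this path is positive, and then the sign of $e_{0}$ is forced to equal $\sigma(C_{n})$ because $\sigma(C_{n})=\prod_{e\in C_{n}}\sigma(e)$ is a switching invariant. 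Hence $(C_{n},\sigma)$ is switching-equivalent to the all-positive cycle $C_{n}^{+}$ when $\sigma(C_{n})=+$, and to the cycle $C_{n}^{-}$ with exactly one negative edge when $\sigma(C_{n})=-$, so it suffices to compute $\eta(C_{n}^{+})$ and $\eta(C_{n}^{-})$.

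For $C_{n}^{+}$ the adjacency matrix is the circulant with first row $(0,1,0,\dots,0,1)$, whose eigenvalues are $2\cos(2\pi j/n)$ for $j=0,1,\dots,n-1$. For $C_{n}^{-}$, put the negative edge between vertices $1$ and $n$; a direct substitution shows that each vector $(1,\zeta,\zeta^{2},\dots,\zeta^{n-1})$ with $\zeta^{n}=-1$ is an eigenvector with eigenvalue $\zeta+\zeta^{-1}=2\cos\bigl(\pi(2j+1)/n\bigr)$, and since these $n$ vectors form a Vandermonde system with distinct nodes they are a basis, so the spectrum of $C_{n}^{-}$ is exactly $\{\,2\cos(\pi(2j+1)/n):j=0,1,\dots,n-1\,\}$. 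It remains to count when $0$ appears: $2\cos(2\pi j/n)=0$ for some $j\in\{0,\dots,n-1\}$ if and only if $4\mid n$, in which case exactly $j=n/4$ and $j=3n/4$ give the value $0$; and $2\cos(\pi(2j+1)/n)=0$ for some such $j$ if and only if $n/2$ is an odd integer, i.e.\ $n\equiv2\pmod4$, in which case exactly $2j+1=n/2$ and $2j+1=3n/2$ give the value $0$. Thus $\eta(C_{n}^{+})=2$ if $4\mid n$ and $\eta(C_{n}^{+})=0$ otherwise, while $\eta(C_{n}^{-})=2$ if $n\equiv2\pmod4$ and $\eta(C_{n}^{-})=0$ otherwise; together with the first paragraph this is exactly the asserted statement.

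The trigonometric bookkeeping is routine; the two points that need care are (i) getting the spectrum of $C_{n}^{-}$ right, since the single negative edge imposes an ``anti-periodic'' boundary condition, which is why the eigenvalues are indexed by the $n$-th roots of $-1$ and involve the half-integer arcs $\pi(2j+1)/n$ rather than $2\pi j/n$; and (ii) the multiplicity count, i.e.\ checking that when $0$ occurs it is attained by exactly two of the listed eigenvalue-values, so that the nullity is precisely $2$ and never larger. Point (ii) reduces to the elementary facts that $2\cos(2\pi j/n)=2\cos(2\pi j'/n)$ if and only if $j'\equiv\pm j\pmod n$, and similarly $2\cos(\pi(2j+1)/n)=2\cos(\pi(2j'+1)/n)$ if and only if $2j+1\equiv\pm(2j'+1)\pmod{2n}$. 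A backup route that avoids switching altogether is the Sachs-type coefficient theorem for the characteristic polynomial of a signed graph: the only spanning subgraphs of $C_{n}$ that are vertex-disjoint unions of edges and cycles are the perfect matchings (present only when $n$ is even) and the cycle itself, so $\phi(C_{n},\sigma;x)$ depends on $\sigma$ only through $\sigma(C_{n})$, which again reduces the problem to the two explicit cases computed above.
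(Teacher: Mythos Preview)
The paper does not prove this lemma; it is quoted from \cite{A7} without argument, so there is no ``paper's own proof'' to compare against. Your proof, however, is correct and self-contained.

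Your argument proceeds by the standard two-step route: first a switching reduction to the two canonical signed cycles $C_{n}^{+}$ and $C_{n}^{-}$, then an explicit spectral computation for each. The switching step is sound --- the relation $A(\varGamma^{U})=D_{U}A(\varGamma)D_{U}$ is a similarity, the sign of the unique cycle is a switching invariant, and switching along a spanning path normalizes all but one edge. The eigenvalue computations are also right: the circulant formula for $C_{n}^{+}$ is classical, and your verification that $(1,\zeta,\dots,\zeta^{n-1})$ with $\zeta^{n}=-1$ is an eigenvector of $A(C_{n}^{-})$ with eigenvalue $\zeta+\zeta^{-1}$ checks out at both boundary vertices, with the Vandermonde argument giving a basis. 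The zero-counting in both cases is accurate, including the multiplicity-two check.

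One small stylistic remark: your third paragraph is commentary rather than proof content (restating points (i) and (ii) that you already handled, and sketching a Sachs-formula alternative). In a polished write-up you could drop it entirely; the first two paragraphs already constitute a complete proof.
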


\begin{lemma} \cite{A2}
	Let $ v $ be a vertex of  $\Gamma$, then $ \eta(\Gamma)-1 \leq \eta(\Gamma-v) \leq \eta(\Gamma)+1 $. 
\end{lemma}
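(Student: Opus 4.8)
The plan is to reduce this to the Cauchy interlacing theorem. Order the vertices of $\Gamma$ so that $v$ is the last one; then $A(\Gamma-v)$ is precisely the $(n-1)\times(n-1)$ principal submatrix of $A(\Gamma)$ obtained by deleting the row and column indexed by $v$. Since $\sigma$ takes values in $\{+,-\}$, both $A(\Gamma)$ and $A(\Gamma-v)$ are real symmetric matrices, so interlacing applies verbatim: writing $\lambda_1\geq\cdots\geq\lambda_n$ for the eigenvalues of $A(\Gamma)$ and $\mu_1\geq\cdots\geq\mu_{n-1}$ for those of $A(\Gamma-v)$, one has
$$\lambda_{i+1}\leq\mu_i\leq\lambda_i\qquad(1\leq i\leq n-1).$$

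From these inequalities I would read off the two bounds. For the lower bound $\eta(\Gamma)-1\leq\eta(\Gamma-v)$: if $\eta(\Gamma)=m\geq 1$, say $\lambda_s=\lambda_{s+1}=\cdots=\lambda_{s+m-1}=0$, then for every $i$ with $s\leq i\leq s+m-2$ we have $0=\lambda_{i+1}\leq\mu_i\leq\lambda_i=0$, forcing $\mu_i=0$; these are $m-1$ zero eigenvalues of $A(\Gamma-v)$, so $\eta(\Gamma-v)\geq m-1$ (the case $m=0$ being trivial). For the upper bound $\eta(\Gamma-v)\leq\eta(\Gamma)+1$: symmetrically, if $\eta(\Gamma-v)=q\geq 1$ with $\mu_t=\cdots=\mu_{t+q-1}=0$, then for every $i$ with $t+1\leq i\leq t+q-1$ we get $\lambda_i\leq\mu_{i-1}=0$ and $\lambda_i\geq\mu_i=0$, hence $\lambda_i=0$; these are $q-1$ zero eigenvalues of $A(\Gamma)$, so $\eta(\Gamma)\geq q-1$.

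A second, even more elementary, route avoids interlacing: passing from $A(\Gamma)$ to $A(\Gamma-v)$ deletes one row and then one column, and each such deletion lowers the rank by at most one and never raises it, so $r(\Gamma)-2\leq r(\Gamma-v)\leq r(\Gamma)$; combining this with $r(\cdot)+\eta(\cdot)=|V(\cdot)|$ and $|V(\Gamma-v)|=|V(\Gamma)|-1$ yields both inequalities at once. Either way there is no genuine obstacle here; the only thing to watch is the index bookkeeping in the interlacing argument (respectively, the two elementary rank facts), which is why the statement is simply quoted from \cite{A2}.
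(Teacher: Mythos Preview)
Your proof is correct; both routes (Cauchy interlacing and the elementary rank argument) are standard and valid for real symmetric matrices, and the adjacency matrix of a signed graph is indeed real symmetric. The paper does not supply its own proof of this lemma---it merely cites the result from \cite{A2}---so there is no in-paper argument to compare against; your write-up would serve perfectly well as a self-contained justification.
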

For an induced subgraph $\Gamma_{1}$ of $\Gamma$ and a vertex $x$ outside $\Gamma_{1}$, the induced subgraph of $\Gamma$ with vertex set $V\left(\Gamma_{1}\right) \cup\{x\}$ is simply written as $\Gamma_{1}+x,$  and sometimes we use the notation $\Gamma-\Gamma_{1}$ instead of $\Gamma-V\left(\Gamma_{1}\right)$.
\begin{lemma} \cite{A18}
Let $v$ be a cut-vertex of a connected signed graph $\Gamma$, and $\Gamma_{1}$ be a component of $\Gamma-v$. 

(1) If $\eta\left(\Gamma_{1}\right)=\eta\left(\Gamma_{1}+v\right)-1$, then $\eta(\Gamma)=\eta\left(\Gamma_{1}\right)+\eta\left(\Gamma-\Gamma_{1}\right)$; 

(2)	If $\eta\left(\Gamma_{1}\right)=\eta\left(\Gamma_{1}+v\right)+1$, then $ \eta(\Gamma)=\eta(\Gamma-v)-1 $. 
\end{lemma}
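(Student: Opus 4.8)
The plan is to reduce the statement to a block decomposition of $A(\Gamma)$ along the cut-vertex $v$ together with one elementary lemma on the rank of a real symmetric matrix with a bordered row and column. First I would set $\Gamma_2=\Gamma-v-\Gamma_1$; since $v$ separates $\Gamma_1$ from the rest, $\Gamma-v=\Gamma_1\cup\Gamma_2$ is a disjoint union and $\Gamma-\Gamma_1=\Gamma_2+v$. Ordering the vertices of $\Gamma$ as $V(\Gamma_1),v,V(\Gamma_2)$ puts the adjacency matrix in the block form
\[
A(\Gamma)=\begin{pmatrix}A(\Gamma_1)&\alpha&0\\ \alpha^{\top}&0&\beta^{\top}\\ 0&\beta&A(\Gamma_2)\end{pmatrix},
\]
where $\alpha$ and $\beta$ are the vectors of signed adjacencies from $v$ into $V(\Gamma_1)$ and $V(\Gamma_2)$, and the off-diagonal zero blocks reflect that $\Gamma_1$ and $\Gamma_2$ are joined by no edge. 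I would then record the standard fact that, for a real symmetric matrix $M$ and a vector $b$, the rank of the bordered matrix $\left(\begin{smallmatrix}M&b\\ b^{\top}&0\end{smallmatrix}\right)$ equals $\operatorname{rank}(M)+2$ if $b\notin\operatorname{col}(M)$, equals $\operatorname{rank}(M)+1$ if $b=Mx$ with $b^{\top}x\ne 0$, and equals $\operatorname{rank}(M)$ if $b=Mx$ with $b^{\top}x=0$; this follows from the congruence $\left(\begin{smallmatrix}I&-x\\ 0&1\end{smallmatrix}\right)$, which turns the $(2,2)$-entry into $-x^{\top}Mx=-b^{\top}x$ (and $b^{\top}x$ does not depend on the choice of $x$). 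Applying this with $M=A(\Gamma_1)$ and $b=\alpha$, and using $\eta=|V|-\operatorname{rank}$, the hypothesis of (1) becomes exactly ``$\alpha=A(\Gamma_1)x$ for some $x$ with $\alpha^{\top}x=0$'', while the hypothesis of (2) becomes exactly ``$\alpha\notin\operatorname{col}(A(\Gamma_1))$'' (recall $\operatorname{col}(A(\Gamma_1))=(\ker A(\Gamma_1))^{\perp}$ since $A(\Gamma_1)$ is symmetric).

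For (1), taking $x$ as above, I would conjugate $A(\Gamma)$ by $P=\left(\begin{smallmatrix}I&-x&0\\ 0&1&0\\ 0&0&I\end{smallmatrix}\right)$. Using $A(\Gamma_1)x=\alpha$ to clear the $\alpha$-blocks and $\alpha^{\top}x=0$ to keep the new $(2,2)$-entry equal to $0$, the computation should give
\[
P^{\top}A(\Gamma)P=A(\Gamma_1)\ \oplus\ \begin{pmatrix}0&\beta^{\top}\\ \beta&A(\Gamma_2)\end{pmatrix}=A(\Gamma_1)\ \oplus\ A(\Gamma-\Gamma_1).
\]
Since congruence by an invertible matrix preserves rank, hence nullity, this yields $\eta(\Gamma)=\eta(\Gamma_1)+\eta(\Gamma-\Gamma_1)$. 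For (2), I would instead regard $A(\Gamma)$ as the block-diagonal matrix $A(\Gamma-v)=A(\Gamma_1)\oplus A(\Gamma_2)$ bordered by the row and column of $v$, namely $b=(\alpha^{\top},\beta^{\top})^{\top}$. Since $\operatorname{col}(A(\Gamma-v))=\operatorname{col}(A(\Gamma_1))\oplus\operatorname{col}(A(\Gamma_2))$ and $\alpha\notin\operatorname{col}(A(\Gamma_1))$, we get $b\notin\operatorname{col}(A(\Gamma-v))$, so the bordered-matrix fact gives $\operatorname{rank}(A(\Gamma))=\operatorname{rank}(A(\Gamma-v))+2$, i.e. $\eta(\Gamma)=\eta(\Gamma-v)-1$.

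The whole argument is linear-algebraic, so I do not anticipate a real obstacle; the care needed is in the bookkeeping. The main point is to check in case (1) that the chosen congruence really annihilates the $\alpha$-block on both sides while leaving the $\beta$-block — and hence the induced subgraph $\Gamma-\Gamma_1$ — intact: this is exactly where the extra condition $\alpha^{\top}x=0$ enters, and it is what distinguishes case (1) from the intermediate possibility $\eta(\Gamma_1+v)=\eta(\Gamma_1)$. A secondary point is that in case (2) the vector $\beta$ plays no role, so the conclusion does not require $\Gamma_2$ to be connected. Finally, since Lemma 2.3 forces $\eta(\Gamma_1+v)\in\{\eta(\Gamma_1)-1,\eta(\Gamma_1),\eta(\Gamma_1)+1\}$, parts (1) and (2) cover precisely the two extreme cases.
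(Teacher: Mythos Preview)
Your argument is correct and complete. The block decomposition at the cut-vertex, the bordered-matrix rank trichotomy, and the congruence by $P$ in case~(1) all go through exactly as you describe; in particular, your identification of the hypotheses of (1) and (2) with the cases $\alpha\in\operatorname{col}(A(\Gamma_1)),\ \alpha^{\top}x=0$ and $\alpha\notin\operatorname{col}(A(\Gamma_1))$ is the right translation, and the claim that $\alpha^{\top}x$ is independent of the choice of $x$ is justified since any two solutions differ by an element of $\ker A(\Gamma_1)=\operatorname{col}(A(\Gamma_1))^{\perp}$.

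As for comparison: the paper does not prove this lemma at all---it is quoted from \cite{A18} (Liu--You) and used as a black box. So there is no ``paper's own proof'' to set yours against. Your linear-algebraic argument via congruence is a clean, self-contained substitute; the typical proofs in the literature for this kind of cut-vertex splitting either proceed the same way or go through an equivalent argument about null-space vectors supported on $\Gamma_1$ versus $\Gamma_2$. Either way, what you have written would stand on its own as a proof of Lemma~2.4.
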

\begin{lemma} \cite{A26}
Let $G$ be a graph and $v \in V(G)$, then we have $c(G-v)=c(G)$ if $v$ does not lie on a cycle of $G$, otherwise $c(G-v) \leq c(G)-1$.	
\end{lemma}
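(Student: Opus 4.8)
The plan is to argue directly from the definition $c(H)=|E(H)|-|V(H)|+\omega(H)$. First I would reduce to the case where $G$ is connected: writing $H$ for the component of $G$ containing $v$, the remaining components are untouched by the deletion of $v$ and contain no cycle through $v$, while $c(G)=c(H)+\sum_{H'\ne H}c(H')$ and likewise for $G-v$; hence $c(G)-c(G-v)=c(H)-c(H-v)$, and $v$ lies on a cycle of $G$ if and only if it lies on a cycle of $H$. So assume henceforth that $G$ is connected.

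Writing $d=d_{G}(v)$ and $k=\omega(G-v)$, deleting $v$ removes exactly $d$ edges and one vertex, so $c(G-v)=\bigl(|E(G)|-d\bigr)-\bigl(|V(G)|-1\bigr)+k$; subtracting $c(G)=|E(G)|-|V(G)|+1$ gives the single identity
\[
c(G)-c(G-v)=d-k .
\]
The point is that, $G$ being connected, every component of $G-v$ contains at least one neighbour of $v$ (otherwise it would already be separated from $v$ in $G$); since the $d$ neighbours are split among the $k$ components, this forces $k\le d$, so $c(G-v)\le c(G)$ in all cases.

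It then remains to decide when the inequality is strict, which is controlled entirely by whether two neighbours of $v$ can land in the same component of $G-v$. If $v$ lies on no cycle, I claim each component of $G-v$ contains exactly one neighbour of $v$: two distinct neighbours $u_{1},u_{2}$ in one component would be joined by a simple path inside that component, which together with the edges $vu_{1},vu_{2}$ produces a cycle through $v$ --- contradiction. Hence $k=d$ and $c(G-v)=c(G)$. Conversely, if $v$ lies on a cycle $C$, its two neighbours on $C$ lie in one component of $G-v$ (joined by the path $C-v$), so the $d$ neighbours occupy at most $d-1$ components; thus $k\le d-1$ and $c(G-v)=c(G)-(d-k)\le c(G)-1$.

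There is no real obstacle here: once the identity $c(G)-c(G-v)=d_{G}(v)-\omega(G-v)$ is recorded, everything is a short counting argument. The only places that need a little care are the reduction to a connected graph --- needed so that ``every component of $G-v$ meets the neighbourhood of $v$'' is legitimate --- and insisting on a \emph{simple} path in the cycle-building step, so that the closed walk obtained is honestly a cycle through $v$.
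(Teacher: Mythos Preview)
Your argument is correct: the identity $c(G)-c(G-v)=d_G(v)-\omega(G-v)$ for connected $G$, together with the observation that two neighbours of $v$ share a component of $G-v$ precisely when $v$ lies on a cycle, gives the result cleanly. Note, however, that the paper does not supply its own proof of this lemma --- it is quoted from \cite{A26} as a known fact --- so there is no in-paper argument to compare against; your proof stands on its own as a standard and complete justification.
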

%\begin{lemma} 
%	Let $\Gamma$ be a connected signed graph and $v \in V(\Gamma)$. Then $c(\Gamma-v)=c(\Gamma)-d_{\Gamma}(v)+\omega(\Gamma-v)$, where $\omega(\Gamma-v)$ is the number of connected components of $\Gamma-v$.
%\end{lemma}

A matching $M$ in $G$ is a set of pairwise non-adjacent edges, that is, no two edges share a common vertex. A maximum matching is a matching that contains the largest possible number of edges. The cardinality of a maximum matching is called the matching number of $ G $, denoted by $ \mu(G) $. 
For a tree $ T $ on at least two vertices, a vertex $ u \in T $ is called a covered vertex  in $ T $, if all maximum matchings of $ T $ that cover $ u $; otherwise, $ u $ is not a covered vertex in $ T $.

\begin{lemma} \cite{A7}
	Let $\Gamma$ be a signed graph of order $n$ and $T$ be a signed tree. Denote by $T(u) \odot^{k} \Gamma$ a signed graph which is obtained from $ T \cup U $ by joining $u \in V(T)$ and arbitrary $k(1 \leq k \leq n)$ vertices of $\Gamma$.
	
	$ (1) $ If $u$ is a covered vertex in $T$, then $\eta\left(T(u) \odot^{k} \Gamma\right)=\eta(T)+\eta(\Gamma)$.
	
	$ (2) $ If $u$ is not a covered vertex in $T$, then $\eta\left(T(u) \odot^{k} \Gamma\right)=\eta(T)-1+\eta(\Gamma+u)$.
\end{lemma}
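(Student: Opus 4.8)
The plan is to prove both parts simultaneously by induction on $|V(T)|$, using the cut-vertex Lemma 2.4 together with the classical identity $\eta(F)=|V(F)|-2\mu(F)$ valid for every signed forest $F$: a signed forest has no cycle, hence is switching equivalent to an all-positive forest, and switching by a diagonal $\pm1$ matrix preserves the spectrum, so this reduces to the familiar forest case. Write $\Gamma'=T(u)\odot^{k}\Gamma$. If $\Gamma$ has a component meeting none of the $k$ new edges, that component appears with the same multiplicity in $\Gamma'$, in $\Gamma$ and in $\Gamma+u$, and does not affect $\eta(T)$, so we may assume $\Gamma'$ is connected. When $|V(T)|=1$ we have $\Gamma'=\Gamma+u$, $\eta(T)=1$, and $u$ is not a covered vertex (the empty matching is maximum and misses $u$), so statement (2) is the trivial identity $\eta(\Gamma+u)=1-1+\eta(\Gamma+u)$; this is the base case.

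Assume now $|V(T)|\ge 2$, so $u$ is a cut-vertex of $\Gamma'$. Let $T_{1},\dots,T_{s}$ be the components of $T-u$; then the components of $\Gamma'-u$ are exactly $T_{1},\dots,T_{s}$ together with the components of $\Gamma$, and for each $i$ the subgraph of $\Gamma'$ induced on $V(T_{i})\cup\{u\}$ is the subtree $T_{i}+u$ of $T$. The combinatorial heart of the argument is the following dictionary, which I would obtain from the elementary recursion that expresses $\mu(T)$ according to whether a maximum matching covers $u$ and, if so, into which component of $T-u$ it matches $u$: if $u$ is a covered vertex of $T$, then some component, say $T_{1}$, satisfies $\mu(T_{1}+u)=\mu(T_{1})+1$ (equivalently, $u$ is covered in $T_{1}+u$); whereas if $u$ is not a covered vertex of $T$, then $\mu(T_{i}+u)=\mu(T_{i})$ for every $i$. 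Via $\eta(F)=|V(F)|-2\mu(F)$ these read $\eta(T_{1})=\eta(T_{1}+u)+1$ in the first case and $\eta(T_{i})=\eta(T_{i}+u)-1$ for all $i$ in the second, which are precisely the hypotheses of Lemma 2.4(2) and of Lemma 2.4(1), applied inside $\Gamma'$ with cut-vertex $u$ and $\Gamma_{1}=T_{i}$.

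For part (1): Lemma 2.4(2) with $\Gamma_{1}=T_{1}$ gives $\eta(\Gamma')=\eta(\Gamma'-u)-1$. Since $\Gamma'-u$ is the disjoint union of the forest $T-u$ and the graph $\Gamma$, we have $\eta(\Gamma'-u)=\eta(T-u)+\eta(\Gamma)$; and $u$ covered in $T$ gives $\mu(T-u)=\mu(T)-1$, hence $\eta(T-u)=\eta(T)+1$. Therefore $\eta(\Gamma')=\eta(T)+\eta(\Gamma)$, with no recourse to the induction hypothesis. For part (2): Lemma 2.4(1) with $\Gamma_{1}=T_{1}$ (any component now works) gives $\eta(\Gamma')=\eta(T_{1})+\eta(\Gamma'-T_{1})$. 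The induced subgraph $\Gamma'-T_{1}$ equals $(T-T_{1})(u)\odot^{k}\Gamma$, where $T-T_{1}$ is a subtree of $T$ on fewer vertices in which $u$ is still not a covered vertex (the matching conditions witnessing this for $T-T_{1}$ form a sub-collection of those for $T$); by the induction hypothesis applied to part (2), $\eta(\Gamma'-T_{1})=\eta(T-T_{1})-1+\eta(\Gamma+u)$, the degenerate case $T-T_{1}=\{u\}$ being the base case above. Finally, a maximum matching of $T$ missing $u$ restricts to maximum matchings of $T_{1}$ and of $T-T_{1}$ (the latter still missing $u$), so $\mu(T)=\mu(T_{1})+\mu(T-T_{1})$ and hence $\eta(T)=\eta(T_{1})+\eta(T-T_{1})$; combining yields $\eta(\Gamma')=\eta(T)-1+\eta(\Gamma+u)$.

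The only genuine obstacle I anticipate is the combinatorial dictionary — proving the covered/not-covered dichotomy between $T$ and the subtrees $T_{i}+u$ from the matching recursion, and checking that ``not covered'' is inherited by $T-T_{1}$ so the induction in part (2) runs — together with the minor bookkeeping for a disconnected $\Gamma$ and the one-vertex tree. Granting the dictionary, each part reduces to a single application of Lemma 2.4 and the nullity--matching identity.
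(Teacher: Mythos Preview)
The paper does not supply a proof of this lemma at all: it is quoted verbatim from reference~\cite{A7} (Fan, Wang, Wang). So there is no in-paper argument to compare your proposal against.

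That said, your proof is correct. The combinatorial dictionary you flag as the only real obstacle is in fact an easy consequence of the recursion $\mu(T)=\max\bigl(\mu(T-u),\,\max_{j}\{\mu(T_{j}+u)+\sum_{i\neq j}\mu(T_{i})\}\bigr)$: if $u$ is not covered in any $T_{j}+u$, then $\mu(T_{j}+u)=\mu(T_{j})$ for every $j$, the maximum above collapses to $\mu(T-u)$, and $u$ is not covered in $T$; the converse is the contrapositive. The inheritance of ``not covered'' by $T-T_{1}$ follows immediately, since the components of $(T-T_{1})-u$ are $T_{2},\dots,T_{s}$ and you have already established that $u$ is not covered in any $T_{j}+u$. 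The translation $\eta(F)=|V(F)|-2\mu(F)$ via switching is standard, and the two applications of Lemma~2.4 are exactly as you state. The reduction to connected $\Gamma'$ by peeling off untouched components of $\Gamma$ is also fine. Nothing in your outline is a genuine gap; the ``dictionary'' paragraph could be stated as two displayed claims and proved in three lines each.
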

\begin{lemma} 
Let $\Gamma$ be a signed graph with $V(\Gamma)=\left\{v_{1}, v_{2}, \cdots, v_{n}\right\}$. Denote by $\tilde{\Gamma}$ the signed graph obtained from $\Gamma$ by replacing each $v_{i}$ of $\Gamma$ with an independent set of $m_{i}$ vertices $v_{i}^{1}, v_{i}^{2}, \cdots, v_{i}^{m_{i}}$ and $ v_{i}^{x} $ $ ^{\pm}_{\sim} $ $ v_{j}^{y} $ in $\tilde{\Gamma}$ if and only if $ v_{i} $ $ ^{\pm}_{\sim} $ $ v_{j} $ in $\Gamma$, where $x \in\left\{1,2, \cdots, m_{i}\right\}$ and $y \in\left\{1,2, \cdots, m_{j}\right\}$. Then $r(\Gamma)=r(\tilde{\Gamma})$.
\end{lemma}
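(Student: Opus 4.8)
The plan is to exhibit the adjacency matrix of $\tilde{\Gamma}$ as a product of the form $D^{\top}A(\Gamma)D$ for a simple $0/1$ matrix $D$ of full row rank, and then deduce the rank equality by elementary linear algebra. To set this up, let $N=\sum_{i=1}^{n}m_{i}=|V(\tilde{\Gamma})|$ and index the rows and columns of $A(\tilde{\Gamma})$ by pairs $(i,x)$ with $1\le i\le n$ and $1\le x\le m_{i}$. Let $D$ be the $n\times N$ matrix that is block diagonal with $i$-th block the all-ones row vector $\mathbf{1}_{m_{i}}^{\top}$; equivalently, the $(i,(j,y))$-entry of $D$ equals $1$ if $i=j$ and $0$ otherwise.

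The first step is to verify the identity $A(\tilde{\Gamma})=D^{\top}A(\Gamma)D$ entrywise: the $((i,x),(j,y))$-entry of $D^{\top}A(\Gamma)D$ collapses to $a_{ij}^{\sigma}$, which by the construction of $\tilde{\Gamma}$ is exactly the $((i,x),(j,y))$-entry of $A(\tilde{\Gamma})$, the case $i=j$ with $x\ne y$ being consistent since $v_{i}^{x}$ and $v_{i}^{y}$ are non-adjacent and $a_{ii}^{\sigma}=0$. The second step is to note that, because every $m_{i}\ge 1$, the $n$ rows of $D$ are nonzero with pairwise disjoint supports, so $\operatorname{rank}(D)=n$; hence $D$, viewed as a map $\mathbb{R}^{N}\to\mathbb{R}^{n}$, is surjective, and $D^{\top}\colon\mathbb{R}^{n}\to\mathbb{R}^{N}$ is injective. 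The final step combines these facts: surjectivity of $D$ gives $r(A(\Gamma)D)=r(A(\Gamma))$ since $A(\Gamma)D$ and $A(\Gamma)$ have the same column space, and injectivity of $D^{\top}$ gives $r(D^{\top}A(\Gamma)D)=r(A(\Gamma)D)$, so $r(\tilde{\Gamma})=r(A(\tilde{\Gamma}))=r(A(\Gamma))=r(\Gamma)$.

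I do not anticipate a genuine obstacle; the only steps that need a little care are the entrywise check of the factorization and the bookkeeping of the diagonal blocks of $A(\tilde{\Gamma})$. An alternative, matrix-free route is induction on $N-n=\sum_{i}(m_{i}-1)$: it suffices to show that appending one further copy $v_{i}^{m_{i}+1}$ of $v_{i}$ does not change the rank, which holds because in the enlarged adjacency matrix the row and the column of the new vertex coincide with those of $v_{i}^{1}$ (both have a $0$ in the new coordinate, as $v_{i}^{m_{i}+1}$ and $v_{i}^{1}$ are non-adjacent), so no new direction is added to the row or column space; iterating over all added vertices yields $r(\tilde{\Gamma})=r(\Gamma)$.
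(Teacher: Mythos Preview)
Your proof is correct. The paper's own argument is a one-line remark: since the rows of $A(\tilde{\Gamma})$ indexed by $v_i^1,\ldots,v_i^{m_i}$ are identical, elementary row operations reduce $A(\tilde{\Gamma})$ to a matrix with the same rank as $A(\Gamma)$. Your factorization $A(\tilde{\Gamma})=D^{\top}A(\Gamma)D$ with $D$ of full row rank is a slightly different, more structural packaging of the same linear-algebra content; it has the advantage of making both the row and column duplications transparent in one identity, whereas the paper's row-reduction implicitly relies on the column duplication as well to finish. Your alternative inductive route---adding one twin vertex at a time and noting that its row and column duplicate an existing one---is essentially the paper's argument spelled out in more detail.
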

\begin{proof}
	Let $\tilde{A}$ and $ A $ be the adjacency matrix of $\tilde{\Gamma}$ and $\Gamma$, respectively. It is easy to get $ r(\tilde{A}) = r(A) $ by performing elementary row transformation on $\tilde{A}$. 
\end{proof}
\begin{lemma} \cite{A7}
	Let $ v $ be a pendant vertex of $\Gamma$ and $ u $ be the neighbour of $v$. Then 
	$ \eta(\Gamma)=\eta(\Gamma-v-u ) $. 
\end{lemma}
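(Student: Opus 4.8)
The plan is to establish the equivalent rank identity $r(\Gamma)=r(\Gamma-v-u)+2$ and then read off the nullity from $r(\Gamma)+\eta(\Gamma)=|V(\Gamma)|$. Since the adjacency matrix of a disjoint union is block diagonal and $v,u$ lie in the same component, I may assume $\Gamma$ is connected; the case $\Gamma=P_2$ being trivial ($\eta(P_2)=0$ and $\Gamma-v-u$ is empty), I assume $|V(\Gamma)|\ge 3$.

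First I would order the vertices of $\Gamma$ with $v_1=v$ and $v_2=u$. Because $v$ is pendant with unique neighbour $u$, the first row and column of $A(\Gamma)$ vanish apart from the entry $a:=\sigma(uv)\in\{+1,-1\}$ in positions $(1,2)$ and $(2,1)$, so
$$A(\Gamma)=\begin{pmatrix}0&a&0\\ a&0&b^{T}\\ 0&b&A'\end{pmatrix},\qquad A'=A(\Gamma-v-u),$$
where $b$ records the signed adjacencies of $u$ to $V(\Gamma)\setminus\{u,v\}$. Using the entry $a$ as a pivot I would clear the off-diagonal vectors $b$ and $b^{T}$ by rank-preserving elementary operations: for each column index $j\ge 3$ I subtract $a^{-1}b_{j-2}$ times the first column from the $j$-th column — this alters only the second entry of that column, since the first column is otherwise zero, and zeroes it out — and perform the symmetric operations on the rows. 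The outcome is block diagonal with the nonsingular $2\times 2$ block $\left(\begin{smallmatrix}0&a\\ a&0\end{smallmatrix}\right)$ (here $a=\pm1$ is essential) and the block $A'$, so $r(\Gamma)=2+r(A')=2+r(\Gamma-v-u)$. Combining with $|V(\Gamma)|=|V(\Gamma-v-u)|+2$ gives $\eta(\Gamma)=\eta(\Gamma-v-u)$.

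A second, matrix-free route I would keep in reserve applies Lemma~2.4 to the cut-vertex $u$ (valid once $|V(\Gamma)|\ge 3$): the component $\{v\}$ of $\Gamma-u$ has nullity $1$, whereas $\{v\}+u=P_2$ has nullity $0$ by Lemma~2.1, so case~(2) applies and $\eta(\Gamma)=\eta(\Gamma-u)-1$; since $\Gamma-u$ is the disjoint union of the isolated vertex $v$ with $\Gamma-v-u$, its nullity is $1+\eta(\Gamma-v-u)$, and the identity follows. I do not anticipate a genuine obstacle — this is the signed-graph analogue of a standard reduction for nullity — the only points needing attention being the degenerate case $\Gamma=P_2$ and the invertibility of the pivot $\sigma(uv)=\pm1$, both handled above.
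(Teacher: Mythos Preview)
Your proof is correct. The paper does not supply its own argument for this lemma but cites it from Fan, Wang and Wang~\cite{A7}, so there is no in-paper proof to compare against; that said, your first route (block form of $A(\Gamma)$, pivot on $\sigma(uv)=\pm1$ to obtain $r(\Gamma)=r(\Gamma-v-u)+2$) is exactly the standard derivation one finds in the literature, and your alternative via Lemma~2.4 is also sound.
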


We have the following Lemma by Lemma $ 2.3 $ and Lemma $ 2.8 $.

\begin{lemma} 
	%\cite{A21}
 Let $\Gamma$ be a signed graph obtained from a signed path $P_{m}$ with $m \geq 2$ and a disjoint signed graph $H$ by identifying a pendant vertex of $P_{m}$ with a vertex of $H$. Then $m(\Gamma, \lambda) \leq m(H, \lambda)+1$ for any $\lambda \in \mathbb{R}$.
\end{lemma}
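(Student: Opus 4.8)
The plan is to argue entirely at the level of the symmetric matrix $A(\Gamma)-\lambda I$, whose nullity is $m(\Gamma,\lambda)$, and to split into two cases according to whether $\lambda$ lies in the spectrum of the hanging path. Write $w_{2}w_{3}\cdots w_{m}$ for the path of $m-1$ new vertices, with $w_{2}$ adjacent to the vertex $v\in V(H)$ that was identified with the pendant of $P_{m}$; this hanging path is a signed $P_{m-1}$ and $vw_{2}$ is the only edge of $\Gamma$ joining $\{w_{2},\dots,w_{m}\}$ to $V(H)$. I will use that a signed path, being switching equivalent to an all-positive path, has the spectrum of the ordinary path, which is simple, and that the interlacing estimate of Lemma~2.3 is just Cauchy interlacing and so applies verbatim to $A(\Gamma)-\lambda I$ and any of its principal submatrices.

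First suppose $\lambda\notin\mathrm{spec}(P_{m-1})$. Ordering the vertices as $w_{2},\dots,w_{m}$ and then $V(H)$ puts $A(\Gamma)-\lambda I$ in block form with invertible leading block $A(P_{m-1})-\lambda I$ (the hanging path), trailing block $A(H)-\lambda I$, and an off-diagonal block with a single nonzero entry $\sigma(vw_{2})$ in the row of $w_{2}$ and the column of $v$. Forming the Schur complement of the leading block shows that $A(\Gamma)-\lambda I$ is congruent to $\bigl(A(P_{m-1})-\lambda I\bigr)\oplus\bigl(A(H)-\lambda I-t\,e_{v}e_{v}^{T}\bigr)$ for the scalar $t=\bigl[(A(P_{m-1})-\lambda I)^{-1}\bigr]_{w_{2}w_{2}}$. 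Since the first summand is nonsingular, $m(\Gamma,\lambda)$ equals the nullity of the second; and $t\,e_{v}e_{v}^{T}$ has rank at most one, so the rank, hence the nullity, of that second summand differs from that of $A(H)-\lambda I$ by at most one, giving $m(\Gamma,\lambda)\le m(H,\lambda)+1$.

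Now suppose $\lambda\in\mathrm{spec}(P_{m-1})$. Here I would delete $w_{2}$: this disconnects the hanging path from $H$, so $\Gamma-w_{2}$ is the disjoint union of $H$ and a signed $P_{m-2}$, and by Lemma~2.3 it suffices to check $m(\Gamma-w_{2},\lambda)=m(H,\lambda)$, i.e.\ $m(P_{m-2},\lambda)=0$. This follows because consecutive paths have disjoint spectra: the eigenvalues of $P_{k}$ are $2\cos\frac{j\pi}{k+1}$ $(1\le j\le k)$, and $2\cos\frac{j\pi}{m}=2\cos\frac{k\pi}{m-1}$ would force $j(m-1)=km$ with $1\le j\le m-1$, impossible since $\gcd(m,m-1)=1$. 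The degenerate cases $m=2,3$ fit this scheme with the conventions $P_{0}=\emptyset$ and $P_{1}=K_{1}$.

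The case $\lambda\in\mathrm{spec}(P_{m-1})$ is immediate once the spectrum fact is in hand; the step I expect to need the most care is the Schur-complement bookkeeping in the other case --- one must check that eliminating the \emph{entire} hanging path changes $A(H)-\lambda I$ in exactly one diagonal entry, the one at $v$, so that the modification genuinely has rank at most one (equivalently, that the ``feedback'' of the path into $H$ enters only through $v$). I should also note that at $\lambda=0$ the scalar $t$ vanishes whenever $0\notin\mathrm{spec}(P_{m-1})$, and the whole argument then collapses to the familiar one of peeling the hanging path off $v$ by repeated use of Lemma~2.8, finishing, when needed, with one application of Lemma~2.3; the above is precisely the extension of that reasoning to arbitrary $\lambda$.
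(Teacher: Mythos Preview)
Your argument is correct. The paper does not actually supply a proof here; it just writes ``We have the following Lemma by Lemma~2.3 and Lemma~2.8.'' Taken at face value that sentence only settles the case $\lambda=0$: one repeatedly peels off the pendant end of the hanging path via Lemma~2.8 (which is stated for nullity only) until at most one hanging vertex remains, and then applies one instance of the interlacing bound Lemma~2.3. For arbitrary $\lambda$ the pendant-deletion identity is no longer available, and some substitute is needed. Your two-case split---Schur complement of the invertible block $A(P_{m-1})-\lambda I$ when $\lambda\notin\mathrm{spec}(P_{m-1})$, and a single interlacing step at $w_{2}$ combined with $\mathrm{spec}(P_{m-1})\cap\mathrm{spec}(P_{m-2})=\emptyset$ otherwise---is exactly such a substitute, and is the standard route in the $A_{\alpha}$/gain-graph literature from which the paper borrows Lemmas~2.10 and~2.11. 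So your approach is genuinely different from (and more complete than) what the paper records; at $\lambda=0$ the two collapse to the same computation, as you note in your final paragraph.
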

\begin{lemma} \cite{A21}
	Let $\Gamma$ be a signed graph obtained from a signed cycle $C_{s}$ and a disjoint signed graph $H$ which are joined by a signed path $P_{m}$ with $m \geq 1$, i.e., identify one pendant vertex of $P_{m}$ with a vertex of $C_{s}$ and another with a vertex of $H$. Then $m(\Gamma, \lambda) \leq m(H, \lambda)+2$ for any $\lambda \in \mathbb{R}$.
\end{lemma}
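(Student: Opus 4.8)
The plan is to reduce the inequality to the pendant‑path case already settled in Lemma 2.10 by deleting a single, well‑chosen vertex of the cycle, and then to carry the bound back to $\Gamma$ by eigenvalue interlacing.

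Write $u$ for the vertex of $C_s$ identified with a pendant vertex of $P_m$ (if $m=1$ this same vertex is the vertex of $H$ carrying the path), and write $u'$ for the other pendant vertex of $P_m$, i.e.\ the vertex of $H$ to which $P_m$ is attached. Since $s\geq 3$, the vertex $u$ has a neighbour $w$ on $C_s$ with $w\neq u$ and $w\notin V(H)$. The first step is to delete $w$ and to recognize $\Gamma-w$: removing $w$ turns the cycle $C_s$ into a path on $s-1$ vertices one of whose endpoints is $u$, and concatenating this path with $P_m$ along $u$ exhibits $\Gamma-w$ as the signed graph obtained from a signed path on $s+m-2$ vertices and the disjoint signed graph $H$ by identifying a pendant vertex of that path with a vertex of $H$ (namely $u'$). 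Since $s\geq 3$ and $m\geq 1$ force $s+m-2\geq 2$, Lemma 2.10 applies and gives $m(\Gamma-w,\lambda)\leq m(H,\lambda)+1$ for every $\lambda\in\mathbb{R}$.

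The second step is interlacing. The matrix $A(\Gamma-w)$ is a principal submatrix of $A(\Gamma)$ of order $|V(\Gamma)|-1$, so Cauchy's interlacing theorem — the fact underlying Lemma 2.3, here applied to $A(\Gamma)$ itself and not merely to its kernel — yields $m(\Gamma,\lambda)\leq m(\Gamma-w,\lambda)+1$ for every $\lambda\in\mathbb{R}$. Chaining the two inequalities gives $m(\Gamma,\lambda)\leq m(H,\lambda)+2$, which is the claim.

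The one point that needs care is the choice of the vertex to delete: deleting $u$ itself would split off the whole residual path $C_s-u$ as a separate component and, combined with Lemma 2.10, would only produce the weaker bound $m(H,\lambda)+3$ when $m\geq 3$; deleting a cycle‑neighbour $w$ of $u$ is what makes the cycle ``unroll'' into the pendant path already hanging off $H$. A one‑line separate check disposes of the degenerate case $m=1$, where $C_s$ and $H$ share the single vertex $u=u'$: then $C_s-w$ is a pendant path on $s-1\geq 2$ vertices attached to $H$ at $u$, and the same two steps go through. I would also flag explicitly that Lemma 2.3 is being used in its general‑$\lambda$ form, i.e.\ as plain interlacing for the real symmetric matrix $A(\Gamma)$.
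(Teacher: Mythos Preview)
The paper does not prove this lemma; it is quoted from reference~[21] (Li--Wei). Your argument is correct and is essentially the natural proof: delete a cycle-neighbour $w$ of the attachment vertex~$u$, recognise $\Gamma-w$ as $H$ with a pendant path on $s+m-2\ge 2$ vertices, apply the pendant-path bound, and then restore $w$ via Cauchy interlacing.

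Two small corrections of a bookkeeping nature. First, the pendant-path lemma you invoke is Lemma~2.9 in this paper, not Lemma~2.10 (Lemma~2.10 is the very statement you are proving). Second, Lemma~2.3 as stated here is only the nullity case $\lambda=0$; you are right that the general-$\lambda$ inequality $m(\Gamma,\lambda)\le m(\Gamma-w,\lambda)+1$ is plain Cauchy interlacing for the real symmetric matrix $A(\Gamma)$, so it should be cited as such rather than as Lemma~2.3. Finally, your separate treatment of $m=1$ is unnecessary: the main count already gives a pendant path of length $s+m-2=s-1\ge 2$ in that case, so the same two steps go through without modification.
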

\begin{lemma} \cite{A21}
	Let $\Gamma$ be a simple signed graph obtained from a signed graph $H$ and a disjoint signed path $P_{m}$ with $m \geq 2$ by identifying two pendant vertices of $P_{m}$ respectively with two distinct vertices of $H$. Then $m(\Gamma, \lambda) \leq m(H, \lambda)+2$ for any $\lambda \in \mathbb{R}$.
\end{lemma}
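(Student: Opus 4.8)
The plan is to reduce this to the single‑attachment situation already handled by Lemma 2.9, paying only one application of vertex‑deletion interlacing. Write $P_m=x_1x_2\cdots x_m$, with $x_1$ identified with a vertex $u_1$ of $H$ and $x_m$ identified with a distinct vertex $u_2$ of $H$; thus $x_2,\dots,x_{m-1}$ are the new vertices added to $H$, and every edge of $\Gamma$ not lying in $H$ is one of $u_1x_2,\,x_2x_3,\,\dots,\,x_{m-1}u_2$. For a single vertex $v$ of $\Gamma$ one has $m(\Gamma,\lambda)\le m(\Gamma-v,\lambda)+1$ for every $\lambda\in\mathbb{R}$ — this is exactly Lemma 2.3 in the case $\lambda=0$, and in general it is Cauchy interlacing applied to the principal submatrix $A(\Gamma-v)$ of $A(\Gamma)$. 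So it suffices to produce one vertex $v$ with $m(\Gamma-v,\lambda)\le m(H,\lambda)+1$.

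For $m\ge 3$ I would delete $v=x_2$, the path‑neighbour of $u_1$. Removing $x_2$ destroys exactly the edges $u_1x_2$ and $x_2x_3$, so $\Gamma-x_2$ is precisely $H$ with the path $x_3x_4\cdots x_{m-1}u_2$ on $m-2$ vertices attached at $u_2$; that is, $\Gamma-x_2$ is obtained from $H$ and the signed path $P_{m-2}$ by identifying one pendant vertex of $P_{m-2}$ with $u_2\in V(H)$. When $m\ge 4$ we have $m-2\ge 2$, and Lemma 2.9 gives $m(\Gamma-x_2,\lambda)\le m(H,\lambda)+1$ for all $\lambda$; when $m=3$ the attached "path'' degenerates to the single vertex $u_2$, so $\Gamma-x_2=H$ and the bound is trivial. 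Combining with the interlacing inequality $m(\Gamma,\lambda)\le m(\Gamma-x_2,\lambda)+1$ yields $m(\Gamma,\lambda)\le m(H,\lambda)+2$.

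The case $m=2$ must be treated separately, since then $P_2=x_1x_2$ has no interior vertex: identifying its ends with $u_1\ne u_2$ merely adjoins the single edge $u_1u_2$ to $H$ (this is where simplicity of $\Gamma$ is used). Here I would delete $v=u_1$, so that $\Gamma-u_1=H-u_1$, and apply interlacing twice: $m(\Gamma,\lambda)\le m(\Gamma-u_1,\lambda)+1=m(H-u_1,\lambda)+1\le m(H,\lambda)+2$. (Equivalently, $A(\Gamma)=A(H)+E$ with $E$ symmetric of rank $2$, supported on the two entries indexed by $u_1,u_2$, and a symmetric rank‑$k$ perturbation changes the multiplicity of any real eigenvalue by at most $k$.)

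There is no deep obstacle here; the only thing to get right is \emph{which} vertex to delete. For $m\ge 3$, deleting a vertex of $H$ (say $u_1$) would cost two interlacing steps — one to pass from $\Gamma$ to $\Gamma-u_1$, and one hidden inside Lemma 2.9 applied to $H-u_1$ — giving only the weaker bound $m(H,\lambda)+3$. The point is to delete a \emph{new} path‑interior vertex adjacent to an attachment point, so that what remains is $H$ itself carrying a pendant path and Lemma 2.9 applies with no loss. The residual bookkeeping is merely the degenerate small cases $m=2$ (no interior vertex) and $m=3$ (the residual pendant path shrinks to a point).
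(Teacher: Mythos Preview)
Your argument is correct. The paper itself does not prove this lemma at all --- it is quoted from \cite{A21} without proof --- so there is no in-paper proof to compare against. Your reduction (delete the path-interior vertex $x_2$ adjacent to the attachment point $u_1$, leaving $H$ with a pendant $P_{m-2}$ attached at $u_2$, then invoke Lemma~2.9 and one step of Cauchy interlacing) is clean and handles the degenerate cases $m=3$ and $m=2$ correctly; in particular your observation that for $m=2$ the graph $\Gamma$ is just $H$ plus one edge, so two interlacing steps through $H-u_1$ suffice, is exactly right. The commentary at the end about why one must delete an interior path vertex rather than a vertex of $H$ is also accurate and worth keeping.
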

%\begin{lemma}\cite{A16}
%	For a tree $T$, all signed graphs on $T$ are switching equivalent.  
%\end{lemma}
	%\begin{Theorem}\cite{A24}
%	Let $G$ be a graph in which every component has at least two vertices. Then
%	$$
%	\eta(G) \leq\left\{\begin{array}{cc}
%	2 c(G)+p(G)-1, & \text { if } p(G) \geq 1\text{; } \\
%	2 c(G), & \text { if } p(G)=0 \text { and } G \text { is a cycle-disjoint signed graph; } \\
%	2 c(G)-1, & \text { if } p(G)=0 \text { and some two distinct cycles have common vertices. }
%	\end{array}\right.
%	$$\end{Theorem}
%Especially, the corresponding extremal graphs have been characterized in [15],   [24] and [6], respectively.
\section{An upper bound on the nullity of signed graph}

In this section, we mainly get an upper bound of the nullity $ \eta(\Gamma) $ of signed graph $ \Gamma $ in terms of the cyclomatic number $ c(\Gamma) $ and the number of pendant vertices $ p(\Gamma) $.

If a neighbor $y$ of $x \in V(G)$ has degree 2, i.e., $d(y)=2$, it is called a 2-degree neighbor of $x$. By using the similar arguments of the proof of Lemma $ 2.6$ in [23], we have the following result.
\begin{lemma}
	Let $\Gamma$ be a connected signed graph with a vertex $x$. Let $r$, $ m $ be the number of components containing 2-degree neighbors of $x$ and the number of 2 -degree neighbors of $x$, respectively, and $ s=\omega(\Gamma-x) $. Then

	$ (1) $ $d(x)+r \geq m+s$;
	
	$ (2) $ $2 d(x)+r \geq m+2 s+1$, if $x$ lies on a cycle of $\Gamma$;
	
$ 	(3) $ $c(\Gamma-x)=c(\Gamma)-d(x)+s$.
\end{lemma}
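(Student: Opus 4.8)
The plan is to forget the signs entirely and argue on the underlying graph $G$ of $\Gamma$: all three assertions involve only degrees, numbers of components, and cyclomatic numbers, which do not see the labelling $\sigma$. I would fix notation as follows. Write $d=d(x)$; let $G_1,\dots,G_s$ be the connected components of $G-x$, so $s=\omega(\Gamma-x)$; for each $i$ let $e_i$ be the number of edges joining $x$ to $G_i$ and $m_i$ the number of $2$-degree neighbours of $x$ that lie in $G_i$. Then $\sum_i e_i=d$, each $e_i\ge 1$ since $G$ is connected, $m=\sum_i m_i$, and $r=|\{\,i:m_i\ge 1\,\}|$. The one structural fact to record first is that in a simple graph each $2$-degree neighbour of $x$ is joined to $x$ by exactly one edge, so the $m_i$ such neighbours in $G_i$ account for $m_i$ of the $e_i$ edges from $x$ into $G_i$; hence $e_i\ge m_i$ for every $i$.

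Part $(3)$ is then a direct count: as $G$ is connected, $c(\Gamma)=|E(G)|-|V(G)|+1$, while $G-x$ has $|E(G)|-d$ edges, $|V(G)|-1$ vertices and $s$ components, so $c(\Gamma-x)=(|E(G)|-d)-(|V(G)|-1)+s=c(\Gamma)-d+s$.

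For $(1)$ I would prove the per-component inequality $e_i+\varepsilon_i\ge m_i+1$, where $\varepsilon_i=1$ if $m_i\ge 1$ and $\varepsilon_i=0$ otherwise: when $m_i=0$ this is just $e_i\ge 1$, and when $m_i\ge 1$ it is the inequality $e_i\ge m_i$ noted above. Summing over $i=1,\dots,s$ yields $d+r\ge m+s$.

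For $(2)$ I would sharpen this to the claim $f_i:=2e_i+\varepsilon_i-m_i-2\ge 0$ for every $i$ (for $m_i=0$ it reads $2e_i\ge 2$; for $m_i\ge 1$ it reads $2e_i-m_i-1\ge 2m_i-m_i-1=m_i-1\ge 0$, again via $e_i\ge m_i$), so that $2d+r=m+2s+\sum_i f_i$ and it suffices to exhibit a single component with $f_i\ge 1$. This is exactly where the hypothesis that $x$ lies on a cycle $C$ enters: $C-x$ is a path, hence lies in a single component $G_j$, and the two neighbours of $x$ on $C$ (distinct, since $C$ has length at least $3$) both lie in $G_j$, so $e_j\ge 2$. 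A short case check on $m_j$ --- $m_j\le 1$ gives $f_j=2e_j-2\ge 2$, while $m_j\ge 2$ gives $e_j\ge m_j$ and hence $f_j\ge m_j-1\ge 1$ --- shows $f_j\ge 1$ in every case, which finishes $(2)$. The only delicate point is this final case analysis: one must check that the extra edge supplied by the cycle is not ``absorbed'' when $x$ has many $2$-degree neighbours in $G_j$, and the inequality $e_j\ge m_j$ is precisely what rules that out. I expect this to be the main (though modest) obstacle; everything else is bookkeeping.
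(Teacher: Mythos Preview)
Your argument is correct. The paper does not actually supply its own proof of this lemma: it merely states that the result follows ``by using the similar arguments of the proof of Lemma~2.6 in~[23]'' (the Ma--Wong--Tian paper on unsigned graphs), so there is nothing substantive to compare against. Your strategy---drop the signs, work component-by-component in $G-x$ with the counts $e_i,m_i$, use $e_i\ge m_i$ and $e_i\ge 1$, and for part~(2) locate a single component $G_j$ with $e_j\ge 2$ via the cycle through $x$---is exactly the natural bookkeeping argument one expects from~[23], and all the inequalities and case checks you write down are valid.
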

	We call $\Gamma$ a cycle-disjoint signed graph if any two distinct cycles of $\Gamma$ (if any) have no common vertices. Now, we give an upper bound  on the nullity of signed graph.

	\begin{Theorem}
		Let $\Gamma$ be a  signed graph in which every component has at least two vertices. Then
		$$
		\eta(\Gamma) \leq\left\{\begin{array}{cc}
		2 c(\Gamma)+p(\Gamma)-1, & \text { if } p(\Gamma)\geq1 \text { ; } \\
		2 c(\Gamma), & \text { if } p(\Gamma)=0 \text { and } \Gamma \text { is a cycle-disjoint signed graph; } \\
		2 c(\Gamma)-1, & \text { if } p(\Gamma)=0 \text { and some cycles have common vertices. }
		\end{array}\right.
		$$\end{Theorem}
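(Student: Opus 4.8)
The plan is to proceed by induction on the number of vertices $n$ of $\Gamma$, treating each connected component separately (since nullity, cyclomatic number, and pendant count are all additive over components, and a disconnected extremal configuration reduces to the connected case). So assume $\Gamma$ is connected with $n \geq 2$. The base cases are small: if $\Gamma$ is a single edge, a path, or a single cycle, Lemmas 2.1 and 2.2 verify the bound directly. For the inductive step I would pick a suitable vertex $x$ and apply one of the vertex-deletion lemmas, then use Lemma 3.1 to control how $c$ and $p$ change, and the perturbation inequality Lemma 2.3 ($\eta(\Gamma) \le \eta(\Gamma - x) + 1$) to pass the bound down. The case split in the theorem ($p \ge 1$; $p = 0$ cycle-disjoint; $p = 0$ with overlapping cycles) will be mirrored by the case analysis in the induction.

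The key structural move is to choose $x$ well. When $p(\Gamma) \ge 1$, I would take $x$ to be the neighbor $u$ of a pendant vertex $v$; by Lemma 2.8, $\eta(\Gamma) = \eta(\Gamma - v - u)$, and $\Gamma - v - u$ has cyclomatic number at most $c(\Gamma)$ and its components each still have $\ge 2$ vertices once we discard isolated vertices (which contribute $0$ to nullity). Counting how many new pendant vertices appear after deleting $v$ and $u$ (controlled by the degree of $u$) against the two vertices removed gives the arithmetic $2c + p - 1$. When $p(\Gamma) = 0$, every vertex lies on a cycle; here I would instead lean on Lemmas 2.4 and 3.1 together with the path/cycle-attachment bounds (Lemmas 2.9–2.11) to peel off a cycle or a cycle joined by a path to the rest of the graph. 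In the cycle-disjoint case, a pendant cycle $C_s$ attached at a single vertex, or joined by a path, contributes at most $2$ to the nullity beyond what remains (Lemma 2.10 with $m=1$, or Lemma 2.4 at a cut-vertex), while removing it drops $c$ by exactly $1$, yielding $2c$. When some cycles share a vertex, there is a cut-vertex $x$ on two cycles, so $c(\Gamma - x) \le c(\Gamma) - 2$ by Lemma 2.5 applied carefully (or $s \ge 2$ in Lemma 3.1(3) forces a larger drop), and combining with $\eta(\Gamma) \le \eta(\Gamma - x) + 1$ gives the sharper $2c - 1$.

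In more detail, for the $p = 0$ cases I expect the cleanest route is: if $\Gamma$ itself is a cycle, Lemma 2.2 gives $\eta \le 2 = 2c$; otherwise there is a vertex $x$ of degree $\ge 3$ lying on a cycle. Apply Lemma 3.1(2)–(3): $2d(x) + r \ge m + 2s + 1$ and $c(\Gamma - x) = c(\Gamma) - d(x) + s$. The components of $\Gamma - x$ fall into those that, together with $x$, still satisfy the induction hypothesis and those (paths hanging off $x$) that become pendant paths; bounding $\eta(\Gamma)$ via $\eta(\Gamma) \le \eta(\Gamma-x)+1 = \sum_i \eta(\Gamma_i) + 1$ and feeding each $\eta(\Gamma_i)$ into the inductive bound, the inequality $d(x) \ge s$ (part (1) with the pendant count) closes the computation, and the extra $+1$ saved when cycles overlap ($s$ strictly larger, or $x$ on $\ge 2$ cycles) produces the $-1$ improvement. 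Alternatively, when $\Gamma$ has a cut-vertex separating off a cycle-part, Lemma 2.4 lets me split $\eta(\Gamma)$ exactly and induct on the two pieces.

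The main obstacle I anticipate is the bookkeeping in the $p = 0$, overlapping-cycles case: one must show that deleting a shared vertex (or choosing the right cut-vertex) genuinely costs two units of cyclomatic number while only gaining one unit of nullity and not creating too many pendant vertices that re-inflate the bound — the newly created pendant vertices after deletion have to be charged against the drop in $c$ correctly. Getting the arithmetic of Lemma 3.1 to line up in every sub-configuration (especially when $\Gamma - x$ has several nontrivial components, some containing cycles and some being bare paths, and when isolated vertices are produced and must be pruned) is where the argument will be delicate; the inequalities in Lemma 3.1(1)–(2) are exactly calibrated for this, so the work is to invoke them in the right combination rather than to prove anything genuinely new.
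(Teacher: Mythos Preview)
Your overall strategy --- induction on $n$, reducing to the connected case, splitting on $p(\Gamma)$, and driving the $p=0$ case with Lemma~3.1 applied at a vertex on a cycle --- is exactly the paper's approach, and your third paragraph is essentially the paper's Case~1 argument. Two concrete errors need fixing, though.

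First, isolated vertices contribute $1$ to nullity, not $0$: an isolated vertex has adjacency matrix $[0]$. So when you delete the pendant $v$ and its neighbor $u$ and isolated vertices appear (which happens whenever $u$ has another degree-one neighbor), you cannot simply discard them. The paper avoids this by splitting the $p\ge 1$ case more finely: when $p\ge 2$ and the neighbor $y$ has $d(y)>2$, it deletes \emph{only} the pendant and uses $\eta(\Gamma)\le\eta(\Gamma-x)+1$ rather than Lemma~2.8; when $d(y)=2$, deleting both is safe because connectivity forces $y$'s other neighbor to have degree $\ge 2$, so no isolated vertex is created. The sub-case $p=1$, $d(y)\ge 3$ then gets its own analysis of the components of $\Gamma-y$ via Lemma~3.1, parallel to the $p=0$ argument.

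Second, your claim that ``when some cycles share a vertex, there is a cut-vertex $x$ on two cycles'' is false --- a $\theta$-graph has overlapping cycles and no cut-vertex --- and Lemma~2.5 only gives $c(\Gamma-x)\le c(\Gamma)-1$, not a drop of two. Your later, more careful version (pick $x$ lying on two cycles and feed Lemma~3.1 into $\eta(\Gamma)\le\eta(\Gamma-x)+1$) is the right move; the paper makes it precise by choosing $x$ common to $C_1,C_2$ with $N_{C_1}(x)\ne N_{C_2}(x)$, which forces $d(x)\ge s+2$ and hence, combined with Lemma~3.1(1), $2d(x)+r\ge m+2s+2$, yielding the extra $-1$ directly in inequality~(1).
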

	\begin{proof}
		We may assume that $\Gamma$ is connected. We proceed by induction on $ n $, the order of $\Gamma$ to prove the inequality. If $ n=2 $, the results is obvious. Now we assume that $n \geq 3$ and 
		%If the order of $\Gamma$ is $ 3$, then $\Gamma=(P_{3},\sigma)$ or $\Gamma=(C_{3},\sigma)$. When the first case happens, $\eta(\Gamma)=1=2 c(\Gamma)+p(\Gamma)-1 ;$ when the second case happens, we have $ \eta(\Gamma)=0$(using Lemma $ 2.2$), $\eta(\Gamma)<2=2 c(\Gamma) .$ Now we assume the order of $\Gamma$ is $n \geq 4$, and assume 
		the inequality holds for any connected signed graph $\Gamma$ with  order $2 \leq|V(\Gamma)| \leq n-1$. The proof is divided into three cases according to the parameter $ p(\Gamma) $.

		\textbf{ Case 1.}  $ p(\Gamma)=0 $.
		
		In this case, $\Gamma$ contains cycles. Let $x$ be a vertex lying on a cycle, and let $\Gamma - x=H_{1} \cup \ldots \cup H_{s}$ be the connected components of $ \Gamma - x $.

		Without loss of generality, let $H_{i}$ be the components containing 2-degree neighbors of $x$ for $i=1, \ldots, r$, and $H_{j}$ be the components containing no 2-degree neighbors of $x$ for $j=r+1, \ldots, s$. This arrangement implies that each $H_{i}$ 
		%(for $i=1, \ldots, r$ )
		has pendant vertices, and each $H_{j}$ has no pendant vertices. Since $\Gamma$ has no pendant vertices, $2 \leq\left|V\left(H_{i}\right)\right|<|V(\Gamma)|$, then we have 
		$$ \eta\left(H_{i}\right) \leq 2 c\left(H_{i}\right)+p\left(H_{i}\right)-1 \text { for } i=1,2, \ldots, r; $$ 
		$$ 	\eta\left(H_{j}\right) \leq 2 c\left(H_{j}\right) \text { for } j=r+1, \ldots, s. $$
		by applying induction hypothesis to each component of $\Gamma-x$.
		%let $H_{i}$ be the components containing $ 2 $-degree neighbors of $x$, and $ H_{i}^{'} $ be the components containing no $ 2 $-degree neighbors of $x$. This arrangement implies that $H_{i}$ has pendant vertices, and $ H_{i}^{'} $ has no pendant vertices. Since $\Gamma$ has no pendant vertices, $2 \leq\left|V\left(H_{i}\right)\right|<|V(\Gamma)|$. Then we have 
		%$$ \eta\left(H_{i}\right) \leq 2 c\left(H_{i}\right)+p\left(H_{i}\right)-1\ and \  \eta\left(H_{i}^{'}\right) \leq 2 c\left(H_{i}^{'}\right)  
		% $$
		% by applying induction hypothesis to each component of $\Gamma-x$.
		
		\leftline {By Lemma $ 2.3 $, we have}
		$$ 	\eta(\Gamma) \leq 1+\eta(\Gamma-x)=1+\sum_{i=1}^{r} \eta\left(H_{i}\right)+\sum_{j=r+1}^{s} \eta\left(H_{j}\right). $$
		So
		$$
		\begin{aligned}
		\eta(\Gamma) & \leq 1+\sum_{i=1}^{r}\left(2 c\left(H_{i}\right)+p\left(H_{i}\right)-1\right)+\sum_{j=r+1}^{s} 2 c\left(H_{j}\right) \\
		&=1-r+2 \sum_{i=1}^{s} c\left(H_{i}\right)+\sum_{i=1}^{r} p\left(H_{i}\right),
		\end{aligned}
		$$
		that is 
		$$ \eta(\Gamma) \leq 1-r+2 c(\Gamma-x)+p(\Gamma-x). $$
		As $\Gamma$ has no pendant vertices, the number of pendant vertices of $\Gamma-x$ equals the number of 2-degree neighbors of $x$ in $\Gamma$, that is
		$$ p(\Gamma-x)=\sum_{i=1}^{r} p\left(H_{i}\right)=m, $$
		where $m$ denotes the number of 2-degree neighbors of $x$, especially, if all the components of $ \Gamma-x $  containing no $ 2 $-degree neighbors of $x$, then $  m=0 $. Lemma $3.1$ says that
		$$ c(\Gamma-x)=c(\Gamma)-d(x)+s. $$
		Then we have
		\begin{equation}
		\eta(\Gamma) \leq 2 c(\Gamma)-[2 d(x)-2 s+r-m-1] . \tag{1}
		\end{equation}
		Applying $ (2) $ of Lemma $ 3.1 $, we have $\eta(\Gamma) \leq 2 c(\Gamma)$.
		
		If two distinct cycles of $\Gamma$, say $C_{1}, C_{2}$, have common vertices, let $x$ be a common vertex of $C_{1}$ and $C_{2}$ such that $N_{C_{1}}(x) \neq N_{C_{2}}(x)$, then $d(x) \geq s+2$. By $ (1) $ of Lemma $3.1$, we have
		\begin{equation}
		2 d(x)+r \geq 2 s+m+2  . \tag{2}
		\end{equation}
		Substituting (2) to (1), we have $\eta(\Gamma) \leq 2 c(\Gamma)-1$.
		
		%	\textbf{ Case 1.2} If all the components of $ \Gamma-x $  containing no $ 2 $-degree neighbors of $x$.
		
		%	The proof is same with the case 1.1 just taking $  m=0 $ in case 1.1.
		
		In the following, we assume $p(\Gamma) \geq 1$.
		
		\textbf{ Case 2.} $p(\Gamma) = 1$.
		
		Let $x$ be the pendant vertex and  $ y $ be the neighbour of $ x $. By Lemma $2.8$, we have 
		$\eta(\Gamma)=\eta\left(\Gamma-x-y\right) $, and $c(\Gamma)=c\left(\Gamma-x-y\right)$, 
		
		If  $ d(y)=2 $, let $ z $ be the other neighbour of $ y $. When $ d(z)=2 $, we have  $ p\left(\Gamma-x-y\right)=p(\Gamma)=1$ and $ c\left(\Gamma-x-y\right)=c(\Gamma) $. So, $ \eta\left(\Gamma-x-y\right) \leq 2 c\left(\Gamma-x-y\right)+p(\Gamma-x-y)-1 $ by applying induction hypothesis to $\Gamma-x-y$ (noting that $\left|V\left(\Gamma-x-y\right)\right| \geq 2$ since $\Gamma$ is assumed with at least 4 vertices). 
		When $ d(z)>2$, we have $p\left(\Gamma-x-y\right)=p(\Gamma)-1=0$. By applying induction hypothesis, we have
		
		$$\eta(\Gamma)=\eta\left(\Gamma-x-y\right) \leq 2 c\left(\Gamma-x-y\right)=2 c(\Gamma)+p(\Gamma)-1.$$
		
		Now, only the case when $p(\Gamma)=1$ and $d(y) \geq 3$ is left. 
		
		Let $\Gamma-y=H_{1} \cup \ldots \cup H_{s}$
		be the connected components of $\Gamma-y$.
		
		Without loss of generality, let $H_{i}$ be the components containing 2-degree neighbors of $y$ for $i=1, \ldots, r$, and $H_{j}$ be the components containing no 2-degree neighbors of $y$ for $j=r+1, \ldots, s$, and $H_{s}$ contains the unique pendant vertex $x$. This arrangement implies that each $H_{i}$ (for $i=1, \ldots, r$ ) has pendant vertices, and each $H_{j}$ (for $j=r+1, \ldots, s-1$ ) has at least two vertices and has no pendant vertices. The induction hypothesis to $H_{i}$ (for $i=1, \ldots, s-1$ ) allows us to assume 
		$$ \eta\left(H_{i}\right) \leq 2 c\left(H_{i}\right)+p\left(H_{i}\right)-1 \text { for } i=1,2, \ldots, r; $$ 
		$$ 	\eta\left(H_{j}\right) \leq 2 c\left(H_{j}\right) \text { for } j=r+1, \ldots, s-1. $$
		From $\eta(\Gamma)=\eta(\Gamma-x-y)$ we have
		$$ \eta(\Gamma)=\sum_{i=1}^{r} \eta\left(H_{i}\right)+\sum_{j=r+1}^{s-1} \eta\left(H_{j}\right). $$
		So 
		$$ \eta(\Gamma) \leq \sum_{i=1}^{r}\left(2 c\left(H_{i}\right)+p\left(H_{i}\right)-1\right)+\sum_{j=r+1}^{s-1} 2 c\left(H_{j}\right), $$
		from which we have
		$$ 	\eta(\Gamma) \leq-r+2 \sum_{i=1}^{s-1} c\left(H_{i}\right)+\sum_{i=1}^{r} p\left(H_{i}\right). $$
		Since $x$ is the unique pendant vertex of $\Gamma$, the number of pendant vertices of $\Gamma-y$ equals the number of 2 -degree neighbors of $y$ in $\Gamma$. Thus
		$$ 	p(\Gamma-y)=\sum_{i=1}^{r} p\left(H_{i}\right)=m, $$ 
		where $m$ denotes the number of 2-degree neighbors of $y$ in $\Gamma$, especially, if all the components of $ \Gamma-y $  containing no $ 2 $ -degree neighbors of $y$, then $ m=0 $. Observing that $c\left(H_{s}\right)=0$ and applying Lemma $3.1$, we have
		$$
		\sum_{i=1}^{s-1} c\left(H_{i}\right)=\sum_{i=1}^{s} c\left(H_{i}\right)=c(\Gamma-y)=c(\Gamma)-d(y)+s.
		$$
		Then we have
		
		\begin{equation}
			\eta(\Gamma) \leq 2 c(\Gamma)-[2 d(y)+r-2 s-m] .    \tag{3}
		\end{equation}
		Lemma $3.1$ says $d(y)+r-s-m \geq 0$. Clearly, $d(y) \geq s$. Thus
		\begin{equation}
			2 d(y)+r-2 s-m \geq 0 .    \tag{4}
		\end{equation}
		Combining $(3)$ and $(4)$, we have $\eta(\Gamma) \leq 2 c(\Gamma)$. As $p(\Gamma)=1$, we obtain the required inequality $\eta(\Gamma) \leq 2 c(\Gamma)+p(\Gamma)-1.$
		
		\textbf{ Case 3.} $p(\Gamma) \geq 2$.
		
		Let $x$ be a pendant vertex and $ y $ be a neighbour of $ x $. By Lemma $2.3$ we have
		$$ \eta(\Gamma) \leq \eta(\Gamma-x)+1. $$
		As $\Gamma-x$ has pendant vertices, the induction hypothesis to $\Gamma-x$ says
		$$ \eta(\Gamma-x) \leq 2 c(\Gamma-x)+p(\Gamma-x)-1. $$
		Obviously, $c(\Gamma-x)=c(\Gamma)$. 
		
		If $d(y)>2$, then
		$$ p(\Gamma-x)=p(\Gamma)-1. $$
		So we have $\eta(\Gamma) \leq 2 c(\Gamma)+p(\Gamma)-1$. 
		
		Now suppose $d(y)=2$. Then we have 
		$ \eta(\Gamma)=\eta(\Gamma-x-y) $, $ c(\Gamma-x-y)=c(\Gamma) $, and $ 1 \leq p(\Gamma)-1 \leq p(\Gamma-x-y)  \leq p(\Gamma) $. By using induction hypothesis to $ \Gamma-x-y $, we have 
		$$\eta(\Gamma)=\eta(\Gamma-x-y) \leq 2 c(\Gamma-x-y)+p(\Gamma-x-y)-1=2 c(\Gamma)+p(\Gamma)-1.$$
		
		%Now $d(y)=2$, let $ z $ be the another neighbour of $ y $. When $d(z)=2$,  $p(\Gamma-x-y)=p(\Gamma) \geq 2$. In this case, we arrive at the required inequality by
		%$$\eta(\Gamma)=\eta(\Gamma-x-y) \leq 2 c(\Gamma-x-y)+p(\Gamma-x-y)-1=2 c(\Gamma)+p(\Gamma)-1.$$
		%When $d(z)>2$, $p(\Gamma-x-y)=$ $p(\Gamma)-1 \geq 1 .$ In this case, we obtain the required inequality by
		
		%	$\eta(\Gamma)=\eta(\Gamma-x-y) \leq 2 c(\Gamma-x-y)+p(\Gamma-x-y)-1=2 c(\Gamma)+p(\Gamma)-2 .$
		\end{proof}
\section{Auxiliary results for the characterization of the extremal signed graph}

%In this section, we will prove some important Lemmas for the following characterization of extremal signed graph.

An internal path of $ \Gamma $ is a path whose internal vertices(except end vertices) have degree $ 2 $ and a major vertex of $ \Gamma $ is a vertex with degree at least $ 3 $. Cycle $ C $ is called pendant in $ \Gamma $ if the cycle contains a unique major vertex of $ \Gamma $.

\begin{lemma} 
	$ (1) $ Suppose  $ P_{6}=v_{1}v_{2}v_{3} v_{4} v_{5} v_{6} $  is a path with four internal vertices of degree $ 2 $ in signed graph $\Gamma$. Let $ \Gamma^{'} $ be the signed graph obtained by replacing $ P_{6} $ with a new edge $ v_{1}v_{6} $ with sign $ \sigma(P_{6}) $. Then $ \eta (\Gamma)=\eta (\Gamma^{'}) $.
	
	$ (2) $ Suppose $C_{t} $ is a pendant cycle with nullity $ 2 $ in $ (\Gamma) $. Let $ \Gamma^{'} $ be the signed graph obtained by replacing $C_{t} $ with a  quadrangle with nullity $ 2 $. Then $ \eta (\Gamma)=\eta (\Gamma^{'}) $.
\end{lemma}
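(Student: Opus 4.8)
\emph{Plan for part (1).} The cleanest route is a direct analysis of the homogeneous system $A(\Gamma)x=0$; connectedness of $\Gamma$ is not needed here. Write $\sigma_i=\sigma(v_iv_{i+1})$ for $i=1,\dots,5$, so that $\sigma(P_6)=\sigma_1\sigma_2\sigma_3\sigma_4\sigma_5$. Since $v_2,v_3,v_4,v_5$ have degree $2$ in $\Gamma$, none of them is adjacent to a vertex outside the path, so the four rows of $A(\Gamma)$ indexed by them involve only coordinates among $x_1,\dots,x_6$. Reading off the rows of $v_2$ and $v_4$ gives the chain $x_3=-\sigma_1\sigma_2 x_1$, $x_5=-\sigma_3\sigma_4 x_3$, and reading off the rows of $v_5$ and $v_3$ gives $x_4=-\sigma_4\sigma_5 x_6$, $x_2=-\sigma_2\sigma_3 x_4$; thus $x_2,x_3,x_4,x_5$ are determined linearly by $x_1$ and $x_6$ and these eliminations form two disjoint chains, so they impose no further constraint. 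Substituting into the only remaining rows of $A(\Gamma)$ that touch the path, namely those of $v_1$ and $v_6$, the path contributions collapse via $\sigma_1 x_2=\sigma(P_6)\,x_6$ and $\sigma_5 x_5=\sigma(P_6)\,x_1$ into exactly the rows of $v_1$ and $v_6$ of $A(\Gamma')$, because in $\Gamma'$ the new edge $v_1v_6$ carries sign $\sigma(P_6)$; every row indexed by a vertex outside the path is literally identical in $\Gamma$ and $\Gamma'$. Hence restriction to the coordinates $(x_1,x_6,(x_w)_{w\notin P_6})$, with inverse given by the telescoping formulas, is a linear isomorphism $\ker A(\Gamma)\to\ker A(\Gamma')$, so $\eta(\Gamma)=\eta(\Gamma')$. (As in the rest of the paper, the replacement is understood to produce a simple signed graph, i.e. $v_1\not\sim v_6$ in $\Gamma$.)

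\emph{Plan for part (2).} I would reduce both $\Gamma$ and $\Gamma'$ to one common signed graph via the cut-vertex Lemma 2.4, working in the component containing the cycle (the other components being untouched, and nullity being additive over components). Write $C_t=u u_1 u_2\cdots u_{t-1}u$, where $u$ is the unique major vertex of $\Gamma$ lying on $C_t$; then $u_1,\dots,u_{t-1}$ all have degree $2$ in $\Gamma$, and since $u$ is major it has a neighbour off the cycle, so $u$ is a cut-vertex. One component of $\Gamma-u$ is the path $\Gamma_1:=u_1u_2\cdots u_{t-1}\cong P_{t-1}$, with $\Gamma_1+u=C_t$ and $\Gamma-\Gamma_1=:H$ the rest of $\Gamma$. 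By Lemma 2.2, $\eta(C_t)=2$ forces $t\equiv 0\pmod 4$ when $C_t$ is positive and $t\equiv 2\pmod 4$ when $C_t$ is negative; in both cases $t-1$ is odd, so by Lemma 2.1 $\eta(\Gamma_1)=\eta(P_{t-1})=1=\eta(C_t)-1=\eta(\Gamma_1+u)-1$. Thus part (1) of Lemma 2.4 applies and $\eta(\Gamma)=\eta(\Gamma_1)+\eta(\Gamma-\Gamma_1)=1+\eta(H)$.

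The identical computation applies to $\Gamma'$: replacing $C_t$ by a positive quadrangle attached at $u$ (the only $C_4$ of nullity $2$, by Lemma 2.2) does not change $H$, keeps $u$ a cut-vertex, and produces a path-component $P_3$ with $\eta(P_3)=1=\eta(C_4)-1$; so Lemma 2.4(1) gives $\eta(\Gamma')=\eta(P_3)+\eta(H)=1+\eta(H)=\eta(\Gamma)$. Part (2) is essentially mechanical once one notices that $u$ is a cut-vertex and that the parity of $t-1$ is the same in all sign/length cases allowed by Lemma 2.2, so the same branch of Lemma 2.4 is used throughout. The genuine content — and the only step that needs care — is the sign bookkeeping in part (1): producing the factor $\sigma(P_6)$ correctly and confirming that the four degree-$2$ rows impose nothing beyond expressing $x_2,x_3,x_4,x_5$ in terms of $x_1,x_6$.
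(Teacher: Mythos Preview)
Your argument for part (1) is correct and is essentially the paper's proof in dual form: the paper performs elementary row operations on $A(\Gamma)$ to collapse the $P_6$-block and read off $r(A(\Gamma))=r(A(\Gamma'))+4$, while you carry out the same elimination on $\ker A(\Gamma)$. The substitutions $x_3=-\sigma_1\sigma_2 x_1$, $x_5=-\sigma_3\sigma_4 x_3$, etc., are exactly the kernel-side shadow of the paper's row moves, and both produce the collapsed sign $\sigma(P_6)$ in the $v_1$- and $v_6$-rows.

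For part (2) you take a genuinely different route. The paper just says ``the proof of (2) is similar to (1)'', i.e.\ another direct matrix reduction on the block of $A(\Gamma)$ corresponding to $C_t$. You instead use the cut-vertex Lemma~2.4: since $\eta(C_t)=2$ forces $t$ even, the path component $P_{t-1}$ of $\Gamma-u$ has $\eta(P_{t-1})=1=\eta(C_t)-1$, so Lemma~2.4(1) gives $\eta(\Gamma)=1+\eta(H)$ with $H=\Gamma-(C_t\setminus\{u\})$; running the identical argument with the positive $C_4$ in place of $C_t$ yields $\eta(\Gamma')=1+\eta(H)$. This is cleaner than redoing the matrix bookkeeping and makes explicit where the hypothesis $\eta(C_t)=2$ enters (it selects branch (1) of Lemma~2.4 on both sides). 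The paper's matrix approach, on the other hand, is self-contained and does not need Lemma~2.4 to already be in hand.
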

%\begin{figure}[!h]
%	\centering
%	\includegraphics{}\\[1mm]
%	{\small{  \bf Fig 2~~}A path $ P_{6} $ is replaced by an edge $ {v_{1} v_{6}} $ }
%	\label{fig1}
%\end{figure}

\begin{proof}
	$ (1) $ The sign of edge $ v_{i} v_{i+1} $ is denoted by $ \sigma_{i, i+1} $, $ i=1, 2, 3, 4, 5 $. Arranging the vertices of $\Gamma$ such that the adjacency matrix of $\Gamma$  is
	$$ A(\Gamma)=\left(\begin{array}{cccccccc}
	A_{1}& \alpha & \mathbf{0} & \mathbf{0} & \mathbf{0}& \mathbf{0} & \beta  \\ 
	\alpha^{T}& 0 & \sigma_{1, 2} & 0 & 0 & 0 & 0  \\ 
	\mathbf{0}& \sigma_{1, 2} & 0 & \sigma_{2, 3} & 0 & 0 & 0 \\ 
	\mathbf{0}& 0 & \sigma_{2, 3} & 0 & \sigma_{3, 4} & 0 &0  \\ 
	\mathbf{0}& 0 & 0 & \sigma_{3, 4} & 0 & \sigma_{4, 5} & 0  \\ 
	\mathbf{0}& 0 & 0 & 0 & \sigma_{4, 5} & 0 & \sigma_{5, 6}  \\ 
	\mathbf{\beta}^{T}& 0 & 0 & 0 & 0 & \sigma_{5, 6} & 0   \\ 
	
	\end{array}\right).  $$
	%where the last six rows are respectively indexed by $ v_{1}, v_{2}, v_{3}, v_{4}, v_{5} $ and $ v_{6} $.
	
	By performing elementary row transformation on the matrix $ A(\Gamma) $, we have
	$$ B=\left(\begin{array}{cccccccc}
	A_{1}& \alpha & \mathbf{0} & \mathbf{0} & \mathbf{0}& \mathbf{0} & \beta  \\ 
	\alpha^{T}& 0 & 0 & 0 & 0 & 0 & \sigma_{1, 6}^{'}  \\ 
	\mathbf{0}& \sigma_{1, 2} & 0 & \sigma_{2, 3} & 0 & 0 & 0 \\ 
	\mathbf{0}& 0 & \sigma_{2, 3} & 0 & \sigma_{3, 4} & 0 &0  \\ 
	\mathbf{0}& 0 & 0 & \sigma_{3, 4} & 0 & \sigma_{4, 5} & 0  \\ 
	\mathbf{0}& 0 & 0 & 0 & \sigma_{4, 5} & 0 & \sigma_{5, 6}  \\ 
	\mathbf{\beta}^{T}& \sigma_{1, 6}^{'} & 0 & 0 & 0 & 0 & 0   \\  
	\end{array}\right)  $$
	where $ \sigma_{1, 6}^{'}= \sigma_{1, 2}\sigma_{2, 3}\sigma_{3, 4}\sigma_{4, 5}\sigma_{5, 6}$. Then we have $ r(A(\Gamma)) = r(B) $
	
	%It is obvious $ r(A(\Gamma)) = r(B) $. Noting that the last two to five  rows of $ B $ are linearly independent, and none of the other four rows can be linearly represented by these four rows in $ B $, so $ r(C) = r(B)- 4$, where 
	Write 
	$$ C = \left(\begin{array}{cccccccc}
	A_{1}& \alpha & \mathbf{0} & \mathbf{0} & \mathbf{0}& \mathbf{0} & \beta  \\ 
	\alpha^{T}& 0 & 0 & 0 & 0 & 0 & \sigma_{1, 6}^{'}  \\ 
	\mathbf{\beta}^{T}& \sigma_{1, 6}^{'} & 0 & 0 & 0 & 0 & 0   \\  
	\end{array} \right) $$
	%whereas $ r(C) = r(D)$, so $ r(D) = r(A(\Gamma)) - 4 $, where
	and 
	$$ D = \left(\begin{array}{cccccccc}
	A_{1}& \alpha  & \beta  \\ 
	\alpha^{T}& 0  & \sigma_{1, 6}^{'}  \\ 
	\mathbf{\beta}^{T}& \sigma_{1, 6}^{'} & 0  \\  
	\end{array} \right) $$
	Then $ r(B) = r(C)+ 4$, $ r(C) = r(D)$ and $ D=A(\Gamma^{'}) $, thus $ (1) $ holds. 
	
%	whereas $ D $ is the adjacency matrix of the transformed signed graph, thus $ (1) $ holds.
	
	$ (2) $ The proof of $ (2) $ is similar to $ (1) $.
\end{proof}

For a signed tree $T$ with at least two vertices, it follows from Theorem $3.1$ that $\eta(T) \leq p(T)-1$. When $p(T)=2,\ T$ becomes a path, then by Lemma $ 2.1 $, the equality holds if and only if $T$ is an even path. When $p(T) \geq 3$, by Lemma $ 3.1 $ in [6] and Lemma $ 2.12$, we have the following Lemma which gives a characterization of signed trees with nullity $p(T)-1$.
\begin{lemma}
	Let $T$ be a signed tree with $p(T)(\geq 3)$ leaves. Then $\eta(T)=p(T)-1$ if and only if $T$ satisfies the following two conditions:
	
	(i) All internal paths from a leaf to a major vertex are odd.
	
	(ii) Let $P(u, v)$ be an internal path from any leaf $u$ of $T$ to a major vertex $v$. Then $T^{\prime}=T-(P(u, v)-v)$ is a tree with nullity $p\left(T^{\prime}\right)-1$, and $v$ is a covered vertex of $T^{\prime}$.
\end{lemma}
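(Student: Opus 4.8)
The plan is to peel off an internal path from a leaf, reducing $T$ to the smaller signed tree $T'=T-(P(u,v)-v)$, and to track the nullity through this reduction with Lemma $2.8$ (or, alternatively, Lemma $2.6$). As a preliminary remark, since a signed tree has no cycles it is balanced, hence switching equivalent to its underlying all-positive tree, hence has the same adjacency rank and nullity; as $p(\cdot)$, ``major vertex'', ``internal path'' and ``covered vertex'' all depend only on the underlying graph, this already reduces the assertion to Lemma $3.1$ in [6] (this being the role of Lemma $2.12$). I will nonetheless carry out the direct argument, which also exhibits the signed ingredient explicitly.

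Since $p(T)\ge 3$, $T$ has a major vertex, so any leaf $u$ has a unique internal path $P(u,v)=uw_{1}\cdots w_{\ell-1}v$ to its nearest major vertex $v$, where $\ell\ge 1$ is the number of edges and $w_{1},\dots,w_{\ell-1}$ have degree $2$ in $T$. Put $T'=T-(P(u,v)-v)$. Then $\deg_{T'}(v)=\deg_{T}(v)-1\ge 2$, so $v$ is not a leaf of $T'$, the leaves of $T'$ are exactly those of $T$ other than $u$, and hence $p(T')=p(T)-1\ge 2$ and $T'$ is a connected signed graph with at least two vertices; Theorem $3.1$ thus gives $\eta(T')\le p(T')-1$. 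Removing the pendant pairs $(u,w_{1}),(w_{2},w_{3}),\dots$ along $P(u,v)$ by repeated use of Lemma $2.8$ shows that $\eta(T)=\eta(T')$ if $\ell$ is even, while $\eta(T)=\eta(T'-v)$ if $\ell$ is odd (in the odd case the final step deletes the pendant $w_{\ell-1}$ together with its neighbour $v$). As $P(u,v)$ has $\ell$ edges, ``$P(u,v)$ is odd'' means ``$\ell$ is odd''.

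To finish, combine these identities with the covered-vertex criterion for signed forests: $v$ is covered in $T'$ if and only if $\eta(T'-v)=\eta(T')+1$, and otherwise $\eta(T'-v)=\eta(T')-1$. If $\eta(T)=p(T)-1$, then $\ell$ cannot be even (else $\eta(T)=\eta(T')\le p(T')-1=p(T)-2$), so every internal path from a leaf to a major vertex is odd --- this is (i); and then $p(T')=p(T)-1=\eta(T)=\eta(T'-v)$, while $\eta(T'-v)\le\eta(T')+1\le p(T')$ by Lemma $2.3$ and Theorem $3.1$, forcing $\eta(T')=p(T')-1$ and $\eta(T'-v)=\eta(T')+1$, i.e.\ $v$ is covered in $T'$ --- this is (ii). Conversely, (i) and (ii) give $\ell$ odd, $v$ covered in $T'$, and $\eta(T')=p(T')-1$, whence $\eta(T)=\eta(T'-v)=\eta(T')+1=p(T')=p(T)-1$.

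The step I expect to be the main obstacle is the covered-vertex criterion in the signed setting rather than only for ordinary trees: it rests on a signed forest having adjacency rank $2\mu$ (by switching invariance together with the classical forest rank formula), which is what lets the matching-theoretic notion ``$v$ is a covered vertex of $T'$'' be read off as $\eta(T'-v)=\eta(T')+1$. A minor point is the degenerate case $p(T')=2$, where $T'$ is a path: Theorem $3.1$ still gives $\eta(T')\le 1$ and the criterion still applies to the interior vertex $v$. Everything else is routine bookkeeping with Lemmas $2.1$, $2.3$, $2.6$, $2.8$ and Theorem $3.1$.
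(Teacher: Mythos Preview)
Your proposal is correct, and your opening paragraph is exactly the paper's own argument: the paper does not prove Lemma~4.2 directly but simply observes that it follows from Lemma~3.1 in [6] together with the fact that a signed tree, being balanced, has the same nullity as its underlying tree (this is what the paper's cryptic reference to ``Lemma~2.12'' is presumably meant to convey). Your subsequent self-contained direct argument via Lemma~2.8 and the covered-vertex criterion is an addition the paper does not supply; it is sound and makes the dependence on Theorem~3.1 and the matching-theoretic identity $\eta(T')=|V(T')|-2\mu(T')$ explicit, which is a genuine improvement in transparency over the bare citation.
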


By slightly modifying the proofs of Lemma $ 3.4 $ in [6], we have the following Lemma for signed graph.
\begin{lemma} 
	Let $\Gamma$ be a connected leaf-free signed graph with $c(\Gamma) \geq 3$. If $\eta(\Gamma)=2 c(\Gamma)-1$, then
	$ \Gamma $  contains a cut-vertex.
\end{lemma}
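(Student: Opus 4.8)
I would prove the contrapositive: if $\Gamma$ is a connected, leaf-free signed graph with $c(\Gamma)\ge 3$ and $\Gamma$ has no cut-vertex, then $\eta(\Gamma)\le 2c(\Gamma)-2$. Such a $\Gamma$ is $2$-connected with at least three vertices, so for every vertex $x$ one has $\omega(\Gamma-x)=1$ and $x$ lies on a cycle. The main induction would be on $c(\Gamma)$, the base case being $c(\Gamma)=3$.

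In the inductive step, with $c(\Gamma)=k\ge 4$, I would fix an open ear decomposition $\Gamma=C_0\cup P_1\cup\cdots\cup P_{k-1}$ and set $\Gamma'=C_0\cup P_1\cup\cdots\cup P_{k-2}$, so that $\Gamma=\Gamma'\cup P_{k-1}$ where $P_{k-1}$ is a path on $m\ge 2$ vertices whose two endpoints $u,v$ are distinct vertices of $\Gamma'$ and whose internal vertices (if any) lie outside $\Gamma'$. Then $\Gamma'$ is $2$-connected with $c(\Gamma')=k-1\ge 3$, hence leaf-free with at least three vertices, so the induction hypothesis gives $\eta(\Gamma')\le 2c(\Gamma')-2=2k-4$. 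Since $\Gamma$ is simple and arises from $H=\Gamma'$ and a disjoint signed path on $m\ge 2$ vertices by identifying the path's two pendant vertices with $u$ and $v$, Lemma~$3.11$ with $\lambda=0$ gives $\eta(\Gamma)=m(\Gamma,0)\le m(\Gamma',0)+2=\eta(\Gamma')+2\le 2k-2$.

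For the base case $c(\Gamma)=3$ the target is $\eta(\Gamma)\le 4$, and here inequality~(1) from the proof of Theorem~$3.1$ (valid for any vertex $x$ on a cycle of a connected leaf-free signed graph) specializes, using $\omega(\Gamma-x)=1$, to $\eta(\Gamma)\le 2c(\Gamma)-\bigl(2d(x)+r-m-3\bigr)$, where $m$ is the number of $2$-degree neighbours of $x$ and $r$ the number of components of $\Gamma-x$ containing one. Since part~(1) of Lemma~$3.1$ gives $2d(x)+r-m\ge d(x)+1$, a vertex of degree at least $4$ yields $\eta(\Gamma)\le 2c(\Gamma)-2=4$; likewise a degree-$3$ vertex with at most two $2$-degree neighbours gives $2d(x)+r-m\ge 5$ and again $\eta(\Gamma)\le 4$. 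So I may assume $\Gamma$ has maximum degree $3$ and that its degree-$3$ vertices are pairwise non-adjacent; a degree count then forces exactly $2c(\Gamma)-2=4$ vertices of degree $3$, so after suppressing the degree-$2$ vertices the underlying graph of $\Gamma$ is a subdivision, with every edge subdivided at least once, of a loopless cubic $2$-connected multigraph on four vertices. Up to isomorphism there are only two such multigraphs, $K_4$ and the one consisting of two double edges together with a perfect matching on their four endpoints; applying part~(1) of Lemma~$4.1$ to shorten long maximal paths of degree-$2$ vertices reduces $\Gamma$, without changing $\eta$, to a bounded-size signed graph, and I would finish by checking the finitely many remaining signed graphs directly.

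I expect this last finite verification to be the main obstacle. The inductive step cleanly collapses everything onto $c(\Gamma)=3$, but there the bound cannot come from vertex deletion alone — in the leftover (cubic, independent degree-$3$ vertices) configuration inequality~(1) only yields $\eta(\Gamma)\le 2c(\Gamma)-1=5$, which mirrors the fact that $\eta\le 2c-2$ really does fail for $2$-connected leaf-free signed graphs when $c=2$ — so the small subdivided graphs must be handled by hand, along the lines of Lemma~$3.4$ of~[6]. Two routine points also need to be in place: every graph occurring in an open ear decomposition of a $2$-connected graph is again $2$-connected (so $\Gamma'$ above is genuinely $2$-connected), and a single-edge ear is still covered by Lemma~$3.11$ with $m=2$; and if the final check is organized up to switching, one uses that nullity is a switching invariant, so that for each underlying graph only the $2^{c(\Gamma)}=8$ sign patterns on a cycle basis need be examined.
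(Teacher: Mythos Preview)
The paper does not actually prove this lemma; it says only that the argument of Lemma~3.4 in~[6] for unsigned graphs goes through with minor modifications. So there is no in-paper proof to compare against, only the pointer to~[6].

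Your inductive step is correct and is a pleasant, self-contained alternative to whatever~[6] does. Peeling off the last open ear of a $2$-connected $\Gamma$ with $c(\Gamma)\ge 4$ and applying Lemma~2.11 (you wrote~3.11) with $\lambda=0$ gives $\eta(\Gamma)\le\eta(\Gamma')+2$, and since every prefix of an open ear decomposition is again $2$-connected, the induction hypothesis applies to $\Gamma'$ with $c(\Gamma')=c(\Gamma)-1\ge 3$. The single-edge ear case is covered by $m=2$, and simplicity of $\Gamma$ ensures $u\not\sim v$ in $\Gamma'$ there, so the hypothesis of Lemma~2.11 is met.

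The base case $c(\Gamma)=3$ is where the proposal is not yet a proof, as you yourself flag. Your reduction via inequality~(1) to maximum degree~$3$ with pairwise non-adjacent degree-$3$ vertices is correct, as is the enumeration of the two loopless $2$-connected cubic multigraphs on four vertices. But the ``finite verification'' is sizeable rather than cosmetic: Lemma~4.1(1) reduces each of the six subdivided edges only modulo~$4$, leaving up to four length choices per edge, times eight switching classes, times two underlying multigraphs; symmetry helps, but you still owe the reader either a computation or a further structural argument. There is also a small wrinkle you should note: the proof of Lemma~4.1(1) as written in this paper assumes the endpoints $v_1,v_6$ are non-adjacent in $\Gamma$, so for the multigraph with a pair of parallel edges you cannot shorten both parallel subdivided edges down to single edges without leaving simple signed graphs --- you must stop one step early on at least one of them. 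None of this invalidates the strategy, but the base case is presently a program, not a proof; if you want to keep this architecture, either carry out the check explicitly (organized by switching class) or replace it by an argument in the spirit of Theorem~5.2 and Lemma~5.2 of this paper, which handle the $2$-connected $c=2$ situation directly.
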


Let $ H $ and $ K $  be two disjoint signed graphs, the signed graph $ (H, v; K, u) $ be regarded as a coalescence of $ H $ and $ K $, obtained by identifying a vertex $ v $ of $ H $ with a vertex $ u $ of $ K $. 

By slightly modifying the proofs of Lemma 3.1 and Lamma 3.2 in [15], we have the following results for signed graph.

\begin {lemma} 
Let $ \Gamma=(H, v; K, u) $, then $\eta(\Gamma) \leq \eta(K)+\eta(H-v)+1$.
\end {lemma}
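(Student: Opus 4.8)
The plan is to recast the claimed nullity bound as a rank bound and then establish the latter by elementary transformations on the adjacency matrix, imitating the argument of Lemma 3.1 in [15]. Write $w$ for the vertex of $\Gamma$ obtained by identifying $v\in V(H)$ with $u\in V(K)$, and set $H_{0}=V(H)\setminus\{w\}$, $K_{0}=V(K)\setminus\{w\}$. Since $\Gamma$ has no edge between $H_{0}$ and $K_{0}$, listing the vertices in the order $H_{0},\,w,\,K_{0}$ puts $A(\Gamma)$ in the block form
$$A(\Gamma)=\begin{pmatrix} A(H-v) & \alpha & O\\ \alpha^{\mathsf T} & 0 & \beta^{\mathsf T}\\ O & \beta & A(K-u)\end{pmatrix},$$
where $\alpha$ and $\beta$ record the edges incident with $w$ inside $H$ and inside $K$, respectively, and the submatrix occupying the rows and columns indexed by $w,K_{0}$ is precisely $A(K)$. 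Because $\eta(\cdot)=|V(\cdot)|-r(\cdot)$ and $|V(\Gamma)|=|V(H)|+|V(K)|-1$, the inequality $\eta(\Gamma)\le\eta(K)+\eta(H-v)+1$ is equivalent to
$$r(\Gamma)\ \ge\ r(K)+r(H-v)-1 .$$

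To prove this, set $d_{1}=r(H-v)$ and $d_{2}=r(K)$, and choose a nonsingular $d_{1}\times d_{1}$ submatrix of $A(H-v)$, on rows $P$ and columns $Q$ (both inside $H_{0}$), together with a nonsingular $d_{2}\times d_{2}$ submatrix of $A(K)$, on rows $R$ and columns $S$ (both inside $\{w\}\cup K_{0}$). The row sets $P$ and $R$ are disjoint, as are the column sets $Q$ and $S$, so it makes sense to look at $N:=A(\Gamma)[P\cup R,\,Q\cup S]$, and it suffices to show $r(N)\ge d_{1}+d_{2}-1$. If $w\notin R$, then the block $A(\Gamma)[R,Q]$ is zero (no $H_{0}$--$K_{0}$ edges), so $N$ is block lower triangular with nonsingular diagonal blocks $A(H-v)[P,Q]$ and $A(K)[R,S]$, whence $r(N)=d_{1}+d_{2}$; the case $w\notin S$ is symmetric.

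The remaining --- and decisive --- case is $w\in R$ and $w\in S$. Writing $\widehat R=R\setminus\{w\}$ and $\widehat S=S\setminus\{w\}$ and ordering the rows of $N$ as $P,\,w,\,\widehat R$ and its columns as $Q,\,w,\,\widehat S$, the matrix $N$ has the same block shape as $A(\Gamma)$ above, with $A(K)[R,S]=\bigl(\begin{smallmatrix}0&\beta_{\widehat S}^{\mathsf T}\\ \beta_{\widehat R}&A(K-u)[\widehat R,\widehat S]\end{smallmatrix}\bigr)$ nonsingular. Since $A(H-v)[P,Q]$ is nonsingular, I would first use the columns indexed by $Q$ to clear the $P$-entries of the $w$-column --- this leaves the $\widehat R$-entries of that column untouched, because the $Q$-columns vanish on the rows $\widehat R$ --- and then, symmetrically, use the rows indexed by $P$ to clear the $Q$-entries of the $w$-row. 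The result is a block-diagonal matrix $\operatorname{diag}\bigl(A(H-v)[P,Q],\,W\bigr)$, where $W$ agrees with the nonsingular matrix $A(K)[R,S]$ in every entry except the $(w,w)$-entry; since changing a single entry of a matrix alters its rank by at most $1$, this gives $r(W)\ge d_{2}-1$ and hence $r(N)=d_{1}+r(W)\ge d_{1}+d_{2}-1$. In all cases $r(\Gamma)\ge r(N)\ge d_{1}+d_{2}-1$, which is what was needed. The degenerate situations in which $H$ or $K$ consists of a single vertex are absorbed by the usual empty-matrix conventions (alternatively, they follow at once from Lemma 2.3).

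The step I expect to require the most care is this last one: one must verify that the two successive elementary reductions genuinely produce a block-diagonal matrix and that $W$ differs from $A(K)[R,S]$ in exactly one entry, so that the single-entry rank estimate applies; concretely, this amounts to checking that clearing the $w$-column does not disturb its $\widehat R$-entries and that clearing the $w$-row does not disturb the block $W$. It is worth noting in passing that whenever some maximum nonsingular submatrix of $A(K)$ can be chosen to avoid both the row and the column of $w$, the very same computation yields the slightly stronger bound $\eta(\Gamma)\le\eta(K)+\eta(H-v)$.
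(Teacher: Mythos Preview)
Your argument is correct and, since both you and the paper defer to Lemma~3.1 of [15], it is essentially the same approach: recast the nullity inequality as the rank inequality $r(\Gamma)\ge r(H-v)+r(K)-1$, then exhibit a $(d_1+d_2)\times(d_1+d_2)$ submatrix of $A(\Gamma)$ of rank at least $d_1+d_2-1$ by block elimination. The verification you flagged as delicate --- that the two elimination steps leave the lower-right block equal to $A(K)[R,S]$ except in the single $(w,w)$ entry --- goes through exactly as you describe, because the $(\widehat R,Q)$ and $(P,\widehat S)$ blocks vanish. One cosmetic remark: with the row/column ordering you chose in the case $w\notin R$, the matrix $N$ is block \emph{upper} triangular rather than lower, but this of course does not affect the rank computation.
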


\begin {lemma} 
Let $H$ and $K$ be two disjoint signed graphs. If $\Gamma$ is obtained from $H$ and $K$ by connecting one vertex $v$ of $H$ and one vertex $u$ of $K$ with a path $P_{m}(m \geq 2)$, then $\eta(\Gamma) \leq \eta(H)+\eta(K)+1$.
\end {lemma}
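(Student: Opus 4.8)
I will parametrize the connecting path as $P_m = x_1 x_2 \cdots x_m$ with $x_1 = v \in V(H)$ and $x_m = u \in V(K)$, so that $x_2, \dots, x_{m-1}$ are the $m-2$ new vertices of $\Gamma$, each of degree $2$, and then argue by parity of $m$. The mechanism I will lean on is Lemma $2.8$: a pendant path attached to a fixed signed graph can be stripped two vertices at a time without changing the nullity, hence contributes nothing to $\eta$ exactly when the number of its new vertices is even. The point of splitting on the parity of $m$ is to guarantee that the auxiliary pendant paths produced in each case have an even number of new vertices.

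For $m$ even (which covers $m = 2$ as well), I will exhibit $\Gamma$ as a coalescence. Take $K^{*}$ to be $K$ with a pendant path $u = x_m, x_{m-1}, \dots, x_2, x_1$ attached at $u$, so that $x_1$ is a leaf of $K^{*}$ and $\Gamma = (K^{*}, x_1; H, v)$. Then Lemma $4.4$, applied with the graph named $H$ there taken to be $K^{*}$ and the graph named $K$ there taken to be $H$, gives $\eta(\Gamma) \le \eta(H) + \eta(K^{*} - x_1) + 1$. Since $K^{*} - x_1$ is $K$ with a pendant path on the $m-2$ (even-many) new vertices $x_2, \dots, x_{m-1}$, repeated use of Lemma $2.8$ reduces it to $\eta(K^{*} - x_1) = \eta(K)$, and the bound follows.

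For $m$ odd we have $m \ge 3$, so $x_2$ is a degree-$2$ new vertex distinct from $v$ and $u$. I will delete $x_2$: this disconnects $\Gamma$ into $H$ and a second piece equal to $K$ with a pendant path on the $m-3$ (even-many) new vertices $x_3, \dots, x_{m-1}$. By Lemma $2.8$ the second piece has nullity $\eta(K)$, so $\eta(\Gamma - x_2) = \eta(H) + \eta(K)$, and Lemma $2.3$ then gives $\eta(\Gamma) \le \eta(\Gamma - x_2) + 1 = \eta(H) + \eta(K) + 1$.

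The main obstacle is the tightness of the constant $+1$. The symmetric-looking routes all leak an extra unit: combining Lemma $4.4$ with the bound $\eta(H - v) \le \eta(H) + 1$ (Lemma $2.3$), or bounding the pendant-path side bluntly through Lemma $2.9$, only gives $\eta(H) + \eta(K) + 2$; and cutting $\Gamma$ at $v$ and invoking Lemma $2.4$ leaves the uncovered case $\eta(\Gamma_1 + v) = \eta(\Gamma_1)$ (which, unwound, is the case $\eta(K - u) = \eta(K)$) where Lemma $2.3$ alone is too weak. What makes the parity argument succeed is exactly that, in each case, the structure left hanging off the untouched graph is a pendant path with an even number of fresh vertices, which disappears for free. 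I will also check the degenerate configurations ($H$ or $K$ a single vertex, or disconnected), but Lemmas $2.3$, $2.8$ and $4.4$ are stated for arbitrary signed graphs, so these need no separate treatment.
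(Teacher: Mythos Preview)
Your argument is correct. The parity split does exactly what you claim: in the even case, writing $\Gamma=(K^{*},x_1;H,v)$ and invoking Lemma~4.4 leaves $K^{*}-x_1$ as $K$ with an even pendant path, which Lemma~2.8 strips for free; in the odd case, deleting $x_2$ separates $\Gamma$ into $H$ and $K$ with an even pendant path, and Lemma~2.3 supplies the lone $+1$. Every step is justified by lemmas already available in the paper.

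As for comparison: the paper does not actually give a proof of this lemma. It states that Lemmas~4.4 and~4.5 follow ``by slightly modifying the proofs of Lemma 3.1 and Lemma 3.2 in [15]'' and leaves it at that. Your write-up is therefore more detailed than what appears in the paper itself. The route you take---reducing to the coalescence bound of Lemma~4.4 in one parity and to vertex deletion in the other, with Lemma~2.8 handling the leftover pendant paths---is the natural one and is consistent with how [15] structures its Lemmas~3.1 and~3.2 (Lemma~3.2 there is derived from Lemma~3.1 together with pendant-vertex reduction). So your proof is essentially aligned with the intended argument, just made explicit where the paper defers to the reference.
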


For convenience, a signed graph $\Gamma$ will be said to be $2^{+}$-deficient if $\eta(\Gamma) \leq 2 c(\Gamma)+p(\Gamma)-2$, and it will be said to be 1-deficient if $\eta(\Gamma)=2 c(\Gamma)+p(\Gamma)-1$. By slightly modifying the proofs of Lemma $ 3.3 $ and Lemma $ 3.4 $ in [15], we have the following Lemmas for signed graph.

\begin {lemma} 
Let $\Gamma=(H, v; K, u)$, if $K$ is $2^{+}$-deficient, then so is $\Gamma$.
\end {lemma}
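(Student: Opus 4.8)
The plan is to work with the coalescence $\Gamma=(H,v;K,u)$ and combine the subadditivity bound of Lemma~4.5 with the cyclomatic and pendant bookkeeping. First I would record the elementary identities relating the parameters of $\Gamma$ to those of $H$ and $K$: since identifying $v$ with $u$ does not create or destroy any cycle, $c(\Gamma)=c(H)+c(K)$; and since a vertex of $\Gamma$ has degree $1$ exactly when it is a leaf of $H$ other than $v$ or a leaf of $K$ other than $u$ (with the coalesced vertex itself possibly becoming or ceasing to be a leaf), one has $p(\Gamma)\ge p(H)+p(K)-1$ in the generic case, and I would isolate the corner cases where $v$ or $u$ is a leaf and treat them separately since then the relation between $p(\Gamma)$, $p(H)$, $p(K)$ changes by a bounded amount.

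Next, apply Lemma~4.5 to get $\eta(\Gamma)\le \eta(K)+\eta(H-v)+1$. By Lemma~2.3, $\eta(H-v)\le \eta(H)+1$, but that is too lossy; instead I would use the hypothesis that $K$ is $2^{+}$-deficient, i.e. $\eta(K)\le 2c(K)+p(K)-2$, together with the (possibly crude) bound $\eta(H-v)\le 2c(H-v)+p(H-v)-1$ coming from Theorem~3.1 applied to each component of $H-v$ (handling isolated vertices of $H-v$ separately, as they contribute $1$ each to the nullity and $1$ each to the leaf/vertex count in a way that must be tracked). Combining these and using Lemma~3.1(3), $c(H-v)=c(H)-d_H(v)+\omega(H-v)$, along with the fact that $H-v$ has at most $d_H(v)$ components and that deleting $v$ can create at most $d_H(v)$ new leaves, I would bound $2c(H-v)+p(H-v)$ in terms of $2c(H)+p(H)$, picking up a controlled slack that should be at most $2d_H(v)-2\omega(H-v)$-type quantities which are nonnegative. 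Assembling the pieces should give $\eta(\Gamma)\le 2c(K)+p(K)-2+2c(H)+p(H)-1+1+(\text{slack})$, and the slack together with the $-1$ from $p(\Gamma)\ge p(H)+p(K)-1$ should collapse to yield exactly $\eta(\Gamma)\le 2c(\Gamma)+p(\Gamma)-2$.

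The main obstacle I expect is the careful accounting when $v$ is a cut-vertex of $H$ with several components in $H-v$, or when $v$ (or $u$) is itself a leaf so that the coalesced graph's leaf count does not simply add: in those situations the naive inequality $\eta(H-v)\le 2c(H-v)+p(H-v)-1$ may not be tight enough, and I would instead need the sharper case analysis of whether $\eta(H-v)=\eta(H-v-w)$ for a leaf $w$ (via Lemma~2.8) or invoke Lemma~2.4 at the cut-vertex $v$ to split $\eta(H)$ exactly. An alternative, cleaner route—which I would try first and which is presumably what the authors intend by ``slightly modifying the proofs of Lemma 3.3 in [15]''—is to avoid $\eta(H-v)$ altogether: use Lemma~2.4 directly on the cut-vertex $v$ of $\Gamma$, separating the component $K-v$ (or $K$ with $v$ attached) from the rest, so that $\eta(\Gamma)$ decomposes into $\eta$ of pieces containing $K$ and pieces containing $H$, and then the $2^{+}$-deficiency of $K$ transfers through one of the two cases of Lemma~2.4 essentially verbatim. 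I would organize the write-up around the two branches of Lemma~2.4 applied at $v$, checking in each branch that the deficiency in $K$ survives the recombination, and only fall back on the $H-v$ estimate if a degenerate subcase forces it.
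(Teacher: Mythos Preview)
The paper itself gives no proof of this lemma: it is simply asserted with the remark that the proof of Lemma~3.3 in [15] (for unsigned graphs) carries over with ``slight modifications'' to the signed setting, so there is no in-paper argument against which to compare line by line.

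Your primary route---apply Lemma~4.4 to obtain $\eta(\Gamma)\le \eta(K)+\eta(H-v)+1$, feed in the $2^{+}$-deficiency of $K$, and then control $\eta(H-v)$ via Theorem~3.1---is indeed the engine behind the argument in [15], so the plan is on the right track and matches what the paper intends. The alternative you sketch via Lemma~2.4 is less promising than you suggest: that lemma gives information only when $\eta(K-u)=\eta(K)\pm 1$, and says nothing in the remaining case $\eta(K-u)=\eta(K)$, so it cannot by itself carry the whole proof.

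Where your proposal falls short of a proof is exactly where you yourself flag it: the inequality $\eta(H-v)\le 2c(H-v)+p(H-v)-1$ is unavailable when $H-v$ has isolated vertices or is leaf-free, and in those boundary situations (for instance $H$ a single edge, so $H-v$ is a single vertex) the na\"ive chain of inequalities loses one unit and yields only $1$-deficiency of $\Gamma$. You correctly name these as obstacles but do not dispatch them; ``I would isolate the corner cases'' and ``the slack should collapse'' are not arguments. A clean finish requires a short explicit case split on $H-v$ (isolated-vertex components stripped first via Lemma~2.8 before Lemma~4.4 is invoked; leaf-free components handled with the cruder bound $\eta\le 2c$, which then suffices because $p$ contributes nothing) together with the exact identity $p(\Gamma)=p(H)+p(K)-[v\ \text{leaf of}\ H]-[u\ \text{leaf of}\ K]$ rather than the loose $p(\Gamma)\ge p(H)+p(K)-1$ you wrote.
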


\begin {lemma} 
Let $\Gamma=(T, v; H, u)$, where $v$ be a leaf  of a $ 1 $-deficient tree $T$, $ T $ and $ H $ be two disjoint signed graph. Then $\eta(\Gamma)=$ $\eta(H)+p(T)-2$.
\end {lemma}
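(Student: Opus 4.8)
The plan is to reduce the statement to the single computation $\eta(T-v)=\eta(T)-1$ and then feed this into the cut-vertex formula of Lemma $2.4$. First I would dispose of the trivial case $|V(H)|=1$: then $\Gamma=T$, and since $\eta(K_1)=1$ the claimed identity reads $\eta(T)=1+p(T)-2$, which holds because $T$ is $1$-deficient and $c(T)=0$, so $\eta(T)=p(T)-1$. I would also observe that we may assume $H$ is connected: if $H_u$ is the component of $H$ containing $u$, then $\Gamma=(T,v;H_u,u)\cup(H-H_u)$, and both $\eta(\cdot)$ and the expression $\eta(\cdot)+p(T)-2$ are additive over this disjoint union, so the general case follows from the connected one.

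So assume $H$ is connected with $|V(H)|\geq 2$. Then $v$ is a cut-vertex of the connected signed graph $\Gamma$, and $\Gamma_1:=T-v$ is a component of $\Gamma-v$. One checks directly that the only edges of $\Gamma$ among $V(T)$ are the edges of $T$, so $\Gamma_1+v=T$, and likewise $\Gamma-\Gamma_1=H$. If the key identity $\eta(T-v)=\eta(T)-1$ holds, then $\eta(\Gamma_1)=\eta(\Gamma_1+v)-1$, so part $(1)$ of Lemma $2.4$ gives $\eta(\Gamma)=\eta(\Gamma_1)+\eta(\Gamma-\Gamma_1)=(\eta(T)-1)+\eta(H)$, which is exactly $\eta(H)+p(T)-2$ since $\eta(T)=p(T)-1$.

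It remains to prove $\eta(T-v)=\eta(T)-1$ (equivalently, that a leaf of a $1$-deficient tree is never a covered vertex). Let $w$ be the neighbour of $v$ in $T$; since $P_2$ is not $1$-deficient, $|V(T)|\geq 3$ and $d_T(w)\geq 2$. By Lemma $2.3$ we have $\eta(T-v)\geq\eta(T)-1$, so it is enough to prove $\eta(T-v)\leq\eta(T)-1$. If $d_T(w)\geq 3$, then deleting $v$ removes exactly one leaf and creates none, so $p(T-v)=p(T)-1$; applying Theorem $3.1$ to the tree $T-v$ (which has at least two vertices) gives $\eta(T-v)\leq p(T-v)-1=p(T)-2=\eta(T)-1$. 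If $d_T(w)=2$, then $w$ becomes a leaf of $T-v$, so $p(T-v)=p(T)$ and Theorem $3.1$ only yields $\eta(T-v)\leq p(T)-1=\eta(T)$; to upgrade this to a strict inequality I would use that the rank of a signed forest is even (it equals twice the matching number of the underlying graph), so $\eta(T-v)$ and $\eta(T)$ have opposite parities and cannot be equal.

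I expect the case $d_T(w)=2$ to be the only delicate point: there Theorem $3.1$ is off by one, and ruling out $\eta(T-v)=\eta(T)$ requires the parity input about signed forests (equivalently, an argument that $v$ cannot be a covered vertex when it is adjacent to a degree-$2$ vertex of a $1$-deficient tree). Once that is in place, the remainder is a routine combination of Lemmas $2.3$ and $2.4$ with the leaf count, and the whole argument closely parallels the treatment of the unsigned case in [15].
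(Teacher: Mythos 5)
Your argument is correct. The paper gives no proof of this lemma at all --- it only asserts that one can ``slightly modify'' Lemma 3.4 of [15] --- so your write-up actually supplies the missing argument, and it does so along the natural lines: reduce everything to the single claim that a leaf $v$ of a $1$-deficient tree $T$ satisfies $\eta(T-v)=\eta(T)-1$ (equivalently, $v$ is not a covered vertex), and then invoke Lemma $2.4(1)$ with $\Gamma_1=T-v$, $\Gamma_1+v=T$, $\Gamma-\Gamma_1=H$. All the reductions (the case $|V(H)|=1$, passing to the component of $H$ containing $u$, checking $p(T-v)$ in the two subcases $d_T(w)\ge 3$ and $d_T(w)=2$) are handled correctly. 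The only ingredient you use that the paper never states is that the nullity of a signed forest on $n$ vertices has the same parity as $n$; this is true (a signed tree is switching-equivalent to its underlying tree, so $r=2\mu$), but it is worth noting that it also follows purely from the paper's own Lemma $2.8$: repeatedly deleting a leaf together with its neighbour preserves the nullity and removes two vertices each time, terminating in a set of isolated vertices, whence $\eta(F)\equiv |V(F)| \pmod 2$. With that remark your proof is entirely self-contained within the paper's stated tools, which is arguably cleaner than the paper's own deferral to [15].
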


1-deficient trees have been characterized by Lemma $4.2$, now we characterize 1-deficient unicyclic signed graphs by referring to Lemma $ 3.5 $ in [15].

\begin {lemma} 
Let $\Gamma$ be a unicyclic signed graph with a unique cycle $C_{m}$. Then $\Gamma$ is 1-deficient if and only if $\Gamma$ is obtained from a 1-deficient tree $T$ with at least three vertices by attaching $C_{m}$ with nullity $ 2 $ at a leaf $x$ of $T$.
%and $ \sigma(C_{m})=+, m \equiv 0( mod \ 4) $ or $ \sigma(C_{m})=-, m \equiv 2( mod \ 4) $.

\end {lemma}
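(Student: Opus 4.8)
This is immediate from Lemma 4.7. If $\Gamma=(T,x;C_m,w)$ with $T$ a $1$-deficient tree on at least three vertices, $x$ a leaf of $T$ and $\eta(C_m)=2$, then Lemma 4.7 gives $\eta(\Gamma)=\eta(C_m)+p(T)-2=p(T)$. Since $c(\Gamma)=1$ and $p(\Gamma)=p(T)-1$ (the leaf $x$ acquires degree $3$, no other leaf of $T$ is disturbed, and $C_m$ contributes none), this reads $\eta(\Gamma)=2c(\Gamma)+p(\Gamma)-1$, so $\Gamma$ is $1$-deficient.

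\textbf{Necessity.} I plan to induct on $n=|V(\Gamma)|$. Let $\Gamma$ be $1$-deficient and unicyclic with cycle $C_m$. Since $\eta(C_m)\in\{0,2\}$ by Lemma 2.2 while $2c(C_m)+p(C_m)-1=1$, we have $\Gamma\neq C_m$, hence $\Gamma$ has a pendant vertex $v$; let $u$ be its neighbour. Suppose first $d(u)=2$, with second neighbour $u'$ of $u$; by Lemma 2.8, $\eta(\Gamma)=\eta(\Gamma-v-u)$ and $c(\Gamma-v-u)=1$. If $d(u')=2$ then $p(\Gamma-v-u)=p(\Gamma)$, so $\Gamma-v-u$ is $1$-deficient unicyclic and smaller; the induction hypothesis puts it in the asserted form, and restoring the two deleted vertices as a pendant $P_2$ at $u'$ (a leaf of $\Gamma-v-u$, hence of its tree part) keeps it in that form, since appending a $P_2$ at a leaf of a tree changes neither its nullity (Lemma 2.8) nor its leaf count. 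If instead $d(u')\ge 3$ then $p(\Gamma-v-u)=p(\Gamma)-1$, so $\eta(\Gamma-v-u)=p(\Gamma)+1=p(\Gamma-v-u)+2$, which by Theorem 3.1 is possible only when $p(\Gamma-v-u)=0$, i.e. $\Gamma-v-u=C_m$ with $\eta(C_m)=2$; then $\Gamma=(P_3,x;C_m,w)$, which is of the required form.

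It remains to handle $d(u)\ge 3$. First $p(\Gamma)\ge 2$ (if $p(\Gamma)=1$ then $\Gamma-v=C_m$ and $\eta(\Gamma)=\eta(C_m-u)=\eta(P_{m-1})\le 1<2$, impossible), so $p(\Gamma-v)=p(\Gamma)-1\ge 1$, and by Lemma 2.3 and Theorem 3.1, $\eta(\Gamma)\le\eta(\Gamma-v)+1\le p(\Gamma)+1$, which forces $\Gamma-v$ to be $1$-deficient unicyclic. By induction, $\Gamma-v=(T_0,x_0;C_m,w)$ with $T_0$ a $1$-deficient tree on at least three vertices and $\eta(C_m)=2$; note $\eta(C_m)=2$ makes $m$ even, hence $\eta(P_{m-1})=1$. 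I then split on the position of $u$ in $\Gamma-v$. (i) If $u\in V(T_0)\setminus\{x_0\}$, then $u$ is an internal vertex of $T_0$, $\Gamma=(T,x_0;C_m,w)$ with $T=T_0+v$, and $T$ is forced to be $1$-deficient by Lemma 4.6 (otherwise $\Gamma$ would be $2^{+}$-deficient) — the desired form. (ii) If $u\in V(C_m)\setminus\{w\}$, then Lemma 2.8 gives $\eta(\Gamma)=\eta(\Gamma-v-u)$, where $\Gamma-v-u$ is the tree obtained by attaching $T_0$ at its leaf $x_0$ to a vertex of $C_m-u=P_{m-1}$; Lemma 4.7 then gives $\eta(\Gamma)=\eta(P_{m-1})+p(T_0)-2=p(T_0)-1=p(\Gamma)-1$, contradicting $1$-deficiency. (iii) If $u=x_0=w$, then $\Gamma$ is $C_m$ with a pendant tree $R=T_0+v$ at $w$ and $d_\Gamma(w)=4$; applying Lemma 2.4 at the cut-vertex $w$ to the component $C_m-w=P_{m-1}$ of $\Gamma-w$ (where $\eta(P_{m-1})=1=\eta(C_m)-1$) gives $\eta(\Gamma)=1+\eta(R)\le p(R)=p(\Gamma)$, again contradicting $1$-deficiency. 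Since (ii) and (iii) cannot occur, $\Gamma$ has the asserted form in every surviving case, completing the induction.

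\textbf{Main obstacle.} The heart of the argument is exactly (ii) and (iii): one must show that a $1$-deficient unicyclic signed graph cannot carry two branch vertices on its cycle, nor a single branch vertex of degree $\ge 4$. The structural fact that makes this work is that $\eta(C_m)=2$ forces $m$ even, so that $P_{m-1}$ has nullity exactly $\eta(C_m)-1$; this is what triggers Lemma 2.4(1) (or, after deleting a suitable pendant edge, Lemma 4.7) and collapses $\eta(\Gamma)$ to a tree quantity, exposing the contradiction. The remaining details — the leaf-count bookkeeping relating $p(\Gamma)$ to $p(T_0)$ and $p(R)$ under the vertex deletions, the reattachment argument in the $d(u)=d(u')=2$ case, and checking that all trees produced have at least three vertices — are routine.
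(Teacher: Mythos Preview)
Your proof is correct. The paper itself does not give an explicit proof of this lemma; it simply states that the characterization follows ``by referring to Lemma 3.5 in [15]'', i.e.\ by adapting the unsigned argument of Wang--Fang--Geng to the signed setting. Your induction on $|V(\Gamma)|$, driven by Lemma~2.8 at a pendant edge and by the interlacing Lemma~2.3 otherwise, together with the contrapositive of Lemma~4.6 and the coalescence formula Lemma~4.7, is precisely such an adaptation and stays entirely within the paper's toolkit. The two structural obstructions you isolate in (ii) and (iii) --- a second branch point on $C_m$, or a single branch point of degree $\ge 4$ --- are exactly the cases one must exclude, and your use of $\eta(C_m)=2\Rightarrow m$ even $\Rightarrow \eta(P_{m-1})=\eta(C_m)-1$ to trigger Lemma~2.4(1) (resp.\ Lemma~4.7) is the right mechanism. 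All the bookkeeping (that $u'$ lands in the tree part when $d(u)=d(u')=2$, that $x_0$ remains a leaf of $T=T_0+v$ in case~(i), the leaf counts $p(\Gamma)=p(T_0)$ in (ii) and $p(R)=p(\Gamma)$ in (iii)) checks out.
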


By slightly modifying the proofs of Lemma $ 3.6 $ and Lemma $ 3.7 $ in [15], we give the following two lemmas to study structural properties of $ 1 $-deficient graphs.

\begin {lemma} 
Let $\Gamma$ be a $ 1 $-deficient connected signed graph with $C_{m}$ as a block. Then

$ (1) $ $ \eta(C_{m})=2 $; 

$ (2) $ Either $C_{m}$ is a pendant cycle of $\Gamma$, or $\Gamma=\infty(m, k, 1)$ such that the nullity of each cycle of $\Gamma$ is $ 2 $.

%$\sigma(C_{k})=+, k \equiv 0( mod \ 4) $ or $ \sigma(C_{k})=-, k \equiv 2( mod \ 4) $.
\end {lemma}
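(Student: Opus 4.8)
The plan is to treat the two claims separately: (1) by a short induction that peels $\Gamma$ apart at the cut-vertices lying on $C_{m}$, and (2) by a structural analysis of the major vertices of $C_{m}$ that reduces everything to the coalescence and path-insertion lemmas of Sections 2 and 4.

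For (1): first note $\Gamma\neq C_{m}$, since otherwise $p(\Gamma)=0$ and $c(\Gamma)=1$, so being $1$-deficient would force $\eta(C_{m})=1$, contradicting Lemma 2.2; hence some vertex of $C_{m}$ is a cut-vertex of $\Gamma$. I would actually prove the formally stronger statement that every connected signed graph which is not $2^{+}$-deficient and has $C_{m}$ as a block satisfies $\eta(C_{m})=2$, by induction on $|V(\Gamma)|$. Choose a cut-vertex $v$ on $C_{m}$, let $\Gamma_{1}$ be the component of $\Gamma-v$ containing $C_{m}-v$, and set $K=\Gamma[V(\Gamma_{1})\cup\{v\}]$, so that $\Gamma=(K,v;H,v)$ for the obvious $H$. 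Since $v$ is a cut-vertex, $|V(H)|\ge 2$, hence $|V(K)|<|V(\Gamma)|$; and $C_{m}$, being a block of $\Gamma$, remains a block of the induced subgraph $K$. By the contrapositive of Lemma 4.8, $K$ is not $2^{+}$-deficient, so the induction hypothesis applies to $K$ and gives $\eta(C_{m})=2$. The base case is $K=C_{m}$: then "not $2^{+}$-deficient" reads $\eta(C_{m})\ge 2c(C_{m})+p(C_{m})-1=1$, whence $\eta(C_{m})=2$ by Lemma 2.2. Since a $1$-deficient graph is never $2^{+}$-deficient, this proves (1).

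For (2): assume $C_{m}$ is not a pendant cycle, so it has at least two major vertices of $\Gamma$; as $C_{m}$ is a block, each major vertex on $C_{m}$ is a cut-vertex of $\Gamma$ and the branch $\Gamma_{x}$ hanging at it has at least two vertices. Take two consecutive major vertices $u,v$ on $C_{m}$, so that one of the two $u$-$v$ arcs of $C_{m}$, say $A$, has only degree-$2$ interior vertices, and set $\Gamma^{\ast}=\Gamma-(V(A)\setminus\{u,v\})$. Then $\Gamma^{\ast}$ is connected, $c(\Gamma^{\ast})=c(\Gamma)-1$, $p(\Gamma^{\ast})=p(\Gamma)$, and $\Gamma$ is recovered from $\Gamma^{\ast}$ by reinserting the path $A$ between the distinct vertices $u$ and $v$, so Lemma 2.12 gives $\eta(\Gamma)\le\eta(\Gamma^{\ast})+2$. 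If $p(\Gamma)\ge 1$, combining this with Theorem 3.1 applied to $\Gamma^{\ast}$ and with the $1$-deficiency of $\Gamma$ forces equality throughout, so $\Gamma^{\ast}$ is $1$-deficient and $\eta(\Gamma)=\eta(\Gamma^{\ast})+2$; I would then use the equality case of Lemma 2.12, together with Lemmas 4.8 and 4.9 applied to the branches $\Gamma_{u},\Gamma_{v}$ and to the complementary arc of $C_{m}$, to show no nontrivial branch can hang at $u$ or $v$, which is a contradiction — so $C_{m}$ is pendant after all. If $p(\Gamma)=0$, one shows instead that $c(\Gamma)=2$ and that $\Gamma$, being leaf-free with a cycle as a block, must be an $\infty$-graph; then $\eta(\Gamma)=2c(\Gamma)-1=3$ together with Lemmas 2.10 and 2.2 pins $\Gamma$ down as $\infty(m,k,1)$ with both cycles of nullity $2$, and part (1) applied to each cycle block gives that every cycle of $\Gamma$ has nullity $2$.

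The induction in (1) and the cyclomatic-and-pendant bookkeeping in (2) are routine. The crux — and the step where I expect to lean most on the arguments of [15] — is extracting rigid local structure from the single tight identity $\eta(\Gamma)=\eta(\Gamma^{\ast})+2$: one must decide exactly when reinserting an arc of $C_{m}$ raises the nullity by a full $2$, and thereby eliminate every configuration of branches at the major vertices of $C_{m}$ except the two-cycle graph $\infty(m,k,1)$. This needs a careful case split on $p(\Gamma)$, on whether $\Gamma_{u},\Gamma_{v}$ are trees, unicyclic, or larger, and on the parity constraints from Lemmas 2.1 and 2.2, and it is the only genuinely delicate point.
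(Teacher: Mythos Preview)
The paper does not give its own proof of this lemma: it simply cites [15] and asserts that the arguments for Lemmas~3.6 and~3.7 there carry over with minor changes. So there is no in-paper argument to compare against directly.

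Your treatment of part~(1) is correct in substance, but the lemma you invoke is Lemma~4.6 (if one factor of a coalescence is $2^{+}$-deficient then so is the whole), not Lemma~4.8; with that correction the induction goes through cleanly.

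Your sketch of part~(2), however, has genuine gaps beyond the one you acknowledge. First, in the $p(\Gamma)\ge 1$ branch you appeal to ``Lemma~4.9 applied to the branches $\Gamma_{u},\Gamma_{v}$'': that is the very statement being proved, and in any case those branches do not contain $C_{m}$ as a block, so the citation is incoherent. You also rely on an ``equality case of Lemma~2.12'', but no such characterisation is stated or proved anywhere in the paper; Lemma~2.12 is a bare inequality. Second, and more seriously, in the $p(\Gamma)=0$ branch you assert that ``one shows instead that $c(\Gamma)=2$'', yet nothing in your set-up forces this. Your arc-deletion yields only
\[
\eta(\Gamma)\le \eta(\Gamma^{\ast})+2\le 2c(\Gamma^{\ast})+2=2c(\Gamma),
\]
which is one larger than the $1$-deficient value $2c(\Gamma)-1$; the squeeze is slack, so you cannot conclude that $\Gamma^{\ast}$ is extremal or that $\Gamma$ carries no further cycles. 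The approach that actually closes this (and is the one in [15]) avoids Lemma~2.12 entirely: knowing from part~(1) that $\eta(C_{m})=2$, hence $\eta(C_{m}-v)=\eta(C_{m})-1$ for any major vertex $v$ on $C_{m}$, one applies Lemma~2.4(1) at each such cut-vertex to obtain an \emph{exact} additive decomposition of $\eta(\Gamma)$, and then the coalescence inequalities of Lemmas~4.4--4.7 force every piece to be extremal simultaneously, which rules out all configurations except a single second cycle glued at the unique major vertex. That exact decomposition, not a two-sided estimate, is the missing idea.
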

\begin {lemma} 
If $\Gamma$ is a $ 1 $-deficient connected signed graph, then the nullity of each cycle $C_{m}$ in $\Gamma$ is $ 2 $. 
%satisfies $\sigma(C_{m})=+, m \equiv 0( mod \ 4) $ or $ \sigma(C_{m})=-, m \equiv 2( mod \ 4) $.
\end {lemma}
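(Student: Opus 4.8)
The plan is to argue by induction on $n=|V(\Gamma)|$. If $\Gamma$ has no cycle the claim is vacuous, so assume $\Gamma$ contains a cycle; fix one such cycle $C$ and assume the result holds for all $1$-deficient connected signed graphs of order $<n$. We must show $\eta(C)=2$. The engine of the proof is the following reduction. Suppose $\Gamma'$ is a connected induced subgraph of $\Gamma$ with $|V(\Gamma')|<n$ and $C\subseteq\Gamma'$, every block of $\Gamma'$ is also a block of $\Gamma$, and $\Gamma'$ is not $2^{+}$-deficient. By Theorem $3.1$, $\Gamma'$ is then either $1$-deficient — and the induction hypothesis applied to $\Gamma'$ yields $\eta(C)=2$ — or $p(\Gamma')=0$, $\Gamma'$ is cycle-disjoint, and $\eta(\Gamma')=2c(\Gamma')$; in the latter case $C$, being a cycle of a cycle-disjoint graph, is (by an ear argument) a block of $\Gamma'$, hence of $\Gamma$, and Lemma $4.9$ applied to $\Gamma$ gives $\eta(C)=2$.

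\emph{Reduction to the leaf-free case.} Let $x$ be a pendant vertex of $\Gamma$ with neighbour $y$. If $d(y)\ge 3$, then $c(\Gamma-x)=c(\Gamma)$, $p(\Gamma-x)=p(\Gamma)-1$, and Lemma $2.3$ gives $\eta(\Gamma-x)\ge\eta(\Gamma)-1=2c(\Gamma-x)+p(\Gamma-x)-1$, so $\Gamma-x$ is not $2^{+}$-deficient; it is connected, contains $C$, and (removing a leaf) its blocks are blocks of $\Gamma$, so the reduction above applies. If $d(y)=2$, then Lemma $2.8$ gives $\eta(\Gamma-x-y)=\eta(\Gamma)$ while $c(\Gamma-x-y)=c(\Gamma)$ and $p(\Gamma)-1\le p(\Gamma-x-y)\le p(\Gamma)$, so $\Gamma-x-y$ is again not $2^{+}$-deficient, connected, contains $C$, with blocks among those of $\Gamma$, and the reduction applies. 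Thus we may assume $p(\Gamma)=0$, so $\eta(\Gamma)=2c(\Gamma)-1$.

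\emph{The leaf-free case.} First note: a $2$-connected signed graph $D$ with $c(D)\ge 1$ that is not $2^{+}$-deficient is either a cycle with $\eta(D)=2$ or a theta graph (two vertices joined by three internally disjoint paths) with $\eta(D)=3$, and in both cases every cycle of $D$ has nullity $2$. Indeed $p(D)=0$; if $c(D)=1$ then $D$ is a cycle, Lemma $2.2$ gives $\eta(D)\in\{0,2\}$, and $\eta(D)=0$ would make $D$ $2^{+}$-deficient, so $\eta(D)=2$; if $c(D)\ge 2$ then the cycles of $D$ share vertices, Theorem $3.1$ gives $\eta(D)\le 2c(D)-1$, hence $\eta(D)=2c(D)-1$ and $D$ is $1$-deficient, and then Lemma $4.3$ rules out $c(D)\ge 3$, so $c(D)=2$ and $D$ is a theta graph with $\eta(D)=3$; finally, writing a theta graph in each of the three ways as one of its cycles $C_s$ plus a path joining two vertices of $C_s$, Lemma $2.11$ gives $3\le\eta(C_s)+2$, so $\eta(C_s)=2$. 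Now, if $\Gamma$ is $2$-connected, apply this with $D=\Gamma$ (which is not $2^{+}$-deficient, being $1$-deficient) and we are done. Otherwise $\Gamma$ has a cut-vertex; choose an end-block $B$ of $\Gamma$, let $w$ be its unique cut-vertex, and put $H=\Gamma-(B-w)$, so $\Gamma$ is the coalescence of $B$ and $H$ at $w$ and the blocks of $\Gamma$ are $B$ together with the blocks of $H$. Since $\Gamma$ is leaf-free, $B$ is $2$-connected, and by Lemma $4.6$ (applied to both presentations of $\Gamma$ as a coalescence) neither $B$ nor $H$ is $2^{+}$-deficient. Applying the note to $D=B$, every cycle of $B$ has nullity $2$; so if $C\subseteq B$ we are done. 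Otherwise $C$ is a cycle of $H$, and since $H$ is connected, $|V(H)|<n$, $C\subseteq H$, the blocks of $H$ are blocks of $\Gamma$, and $H$ is not $2^{+}$-deficient, the reduction of the first paragraph applied to $\Gamma'=H$ gives $\eta(C)=2$.

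The step I expect to be the main obstacle is the bookkeeping that legitimizes the reduction: in each instance one must confirm that deleting a pendant vertex (and possibly its degree-$2$ neighbour) or an end-block neither destroys $C$ nor merges or splits blocks, and — more substantively — that a subgraph known only to be \emph{not} $2^{+}$-deficient really does lie in one of the two admissible classes ($1$-deficient, or cycle-disjoint with $\eta=2c$), which is exactly what the trichotomy of Theorem $3.1$ delivers once $c$ and $p$ are tracked correctly. A second delicate point is the exclusion of $2$-connected leaf-free $1$-deficient graphs with $c\ge 3$: this is precisely where Lemma $4.3$ is indispensable, and it is what forces every $2$-connected block met in the argument to be a cycle or a theta graph, the two shapes Lemmas $2.2$ and $2.11$ dispose of.
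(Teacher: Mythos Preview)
Your argument is correct. The paper does not prove Lemma 4.10 itself; it only states that the result follows ``by slightly modifying the proofs of Lemma 3.6 and Lemma 3.7 in [15]'', so no line-by-line comparison with an in-paper proof is possible. Your proof is built from precisely the tools the paper assembles for this purpose---Theorem 3.1 for the trichotomy, Lemma 4.6 to transfer non-$2^{+}$-deficiency across a coalescence, Lemma 4.9 for cycles that occur as blocks, Lemma 4.3 to rule out $2$-connected leaf-free $1$-deficient graphs with $c\ge 3$, and Lemmas 2.2 and 2.11 to dispose of cycles and theta graphs---so it is entirely in the spirit of the approach the paper imports from [15]. The inductive scheme (strip leaves via Lemmas 2.3 and 2.8, then peel an end-block via Lemma 4.6, and handle the $2$-connected core directly) is sound, and the two points you flag as delicate are indeed the crux: (i) the dichotomy after Theorem 3.1 (a not-$2^{+}$-deficient connected $\Gamma'$ containing a cycle is either $1$-deficient, or is leaf-free, cycle-disjoint with $\eta(\Gamma')=2c(\Gamma')$) is exactly right once one checks that $p(\Gamma')=0$ plus cycle-disjoint plus $\eta=2c-1$ still counts as $1$-deficient; and (ii) the block-preservation clause is what legitimizes invoking Lemma 4.9 on the original $\Gamma$ in the second branch, and it holds in each of your reductions (removing a leaf, a leaf together with its degree-$2$ neighbour, or an end-block deletes blocks without altering the remaining ones).
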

\section{Characterization of the extremal signed graph}

In this section, we characterize the corresponding extremal signed graph in Theorem $ 3.1 $.
%, that is the main results are theorem $ 5.1, 5.2, 5.3 $ and $ 5.4 $.

Firstly, we characterize the signed graph with nullity $ 2 c(\Gamma)+p(\Gamma) $.

\begin{Theorem}
	Let $ \Gamma $ be a signed graph in which every component contains at least two vertices, then $\eta(\Gamma)= 2 c(\Gamma)+p(\Gamma)$ if and only if every component of $\Gamma$ is a signed cycle with nullity $ 2$. 
	\end{Theorem}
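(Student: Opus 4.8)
The backward implication is immediate: if every component of $\Gamma$ is a signed cycle $C$, then $c(C)=1$, $p(C)=0$, and the hypothesis $\eta(C)=2$ gives $\eta(C)=2c(C)+p(C)$; summing over components (all three parameters are additive) yields $\eta(\Gamma)=2c(\Gamma)+p(\Gamma)$. For the forward implication my plan is to run everything against the bound of Theorem $3.1$ and extract the equality cases. Since $\eta,c,p$ are additive over components and Theorem $3.1$ gives $\eta(\Gamma')\le 2c(\Gamma')+p(\Gamma')$ for each component $\Gamma'$, equality on $\Gamma$ forces equality on every component, so I may assume $\Gamma$ is connected. Now Theorem $3.1$ excludes $p(\Gamma)\ge 1$ (which would give $\eta\le 2c+p-1$) and also excludes two cycles sharing a vertex (which would give $\eta\le 2c-1$); hence $\Gamma$ is leaf-free and cycle-disjoint, every vertex has degree at least $2$, $c(\Gamma)\ge 1$, and $\eta(\Gamma)=2c(\Gamma)$.

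The core of the argument is to prove $c(\Gamma)=1$. I would assume $c(\Gamma)\ge 2$ and derive $\eta(\Gamma)\le 2c(\Gamma)-1$, a contradiction. First, a standard block-structure argument (in a cycle-disjoint graph every block is an edge or a cycle, and leaf-freeness rules out a $K_2$ leaf block) produces a pendant cycle $C_s$ of $\Gamma$, whose unique major vertex $v$ is then a cut-vertex, with $C_s-v$ a whole component of $\Gamma-v$. By Lemma $2.2$, $\eta(C_s)\in\{0,2\}$, and I split accordingly. If $\eta(C_s)=0$, then $C_s$ on its own satisfies $\eta(C_s)=0=2c(C_s)+p(C_s)-2$, i.e.\ $C_s$ is $2^{+}$-deficient; writing $\Gamma=(H,v;C_s,v)$ with $H=\Gamma-(C_s-v)$, Lemma $4.6$ propagates $2^{+}$-deficiency to $\Gamma$, so $\eta(\Gamma)\le 2c(\Gamma)-2$. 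If $\eta(C_s)=2$, then $s$ is even (Lemma $2.2$), so $\eta(C_s-v)=\eta(P_{s-1})=1$ by Lemma $2.1$ while $\eta((C_s-v)+v)=\eta(C_s)=2$; thus the component $\Gamma_1=C_s-v$ of $\Gamma-v$ satisfies $\eta(\Gamma_1)=\eta(\Gamma_1+v)-1$, and part $(1)$ of Lemma $2.4$ gives $\eta(\Gamma)=\eta(\Gamma_1)+\eta(\Gamma-\Gamma_1)=1+\eta(H)$, where $H=\Gamma-(C_s-v)$ is connected with $c(H)=c(\Gamma)-1\ge 1$. Here $v$ is the only vertex of $H$ whose degree changes, dropping by $2$, so $H$ is leaf-free when $d_\Gamma(v)\ge 4$ and has a single leaf when $d_\Gamma(v)=3$; in both cases Theorem $3.1$ applied to the (still cycle-disjoint) graph $H$ gives $\eta(H)\le 2c(H)=2c(\Gamma)-2$, hence $\eta(\Gamma)=1+\eta(H)\le 2c(\Gamma)-1$. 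Both cases contradict $\eta(\Gamma)=2c(\Gamma)$, so $c(\Gamma)=1$; then the connected leaf-free graph $\Gamma$ is a single cycle $C_n$, and $\eta(\Gamma)=2c(\Gamma)=2$ forces $\eta(C_n)=2$. Undoing the reduction to connected components finishes the proof.

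The step I expect to be the genuine obstacle is the nullity-$0$ pendant cycle: the cut-vertex Lemma $2.4$ is useless there, since one only has $\eta(\Gamma_1)=\eta(\Gamma_1+v)$, so the argument has to route through the fact that $2^{+}$-deficiency survives a coalescence (Lemma $4.6$), together with the observation that a nullity-$0$ cycle is already $2^{+}$-deficient by itself. The remaining fiddly point is the bookkeeping after deleting the pendant cycle — checking $c(H)=c(\Gamma)-1$ (via the Euler-characteristic count, or Lemma $2.5$), that $H$ stays connected and cycle-disjoint, and that the at most one new leaf it may acquire when $d_\Gamma(v)=3$ is exactly the $p\ge 1$ branch of Theorem $3.1$ — but none of this should be more than routine.
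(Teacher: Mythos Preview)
Your proof is correct and more streamlined than the paper's. Both arguments reduce to a connected, leaf-free, cycle-disjoint $\Gamma$ with $\eta(\Gamma)=2c(\Gamma)$, locate a pendant cycle $C_s$ with unique major vertex $v$, and derive a contradiction when $c(\Gamma)\ge 2$. The paper finds the pendant cycle via a maximum-distance argument among the cycles of $\Gamma$ and then runs a five-way case split according to the residue of $s$ modulo $4$ and the sign of $C_s$, appealing to Lemma~2.4(1), Lemma~2.4(2), or Lemma~2.3 in the various branches and bounding the leftover piece $H_1$ or $H_2$ by Theorem~3.1. You instead split only on $\eta(C_s)\in\{0,2\}$: for $\eta(C_s)=2$ you use Lemma~2.4(1) essentially as the paper does, but for $\eta(C_s)=0$ you bypass the cut-vertex lemma entirely by observing that $C_s$ is itself $2^{+}$-deficient and invoking Lemma~4.6 to propagate $2^{+}$-deficiency to the coalescence $\Gamma=(H,v;C_s,v)$. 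This buys you a clean two-case dichotomy in place of the paper's mod-$4$/sign casework; the trade-off is that you lean on the later Lemma~4.6, whereas the paper's argument for Theorem~5.1 stays within the more elementary toolkit of Lemmas~2.1--2.4 and the bound of Theorem~3.1.
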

\begin{proof}
	The sufficiency is obvious.
	
Now we prove the sufficiency part. 

Firstly,
	%where $\Gamma$ is a signed graph in which each component contains at least two vertices. 
we have $\eta(H) \leq 2 c(H)+p(H)$ for any connected component $H$ of $\Gamma$  by Theorem $ 3.1 $. According to $\eta(\Gamma)=2 c(\Gamma)+p(\Gamma)$, we have $\eta(H)=2 c(H)+p(H)$ for any component $H$ of $\Gamma$. Next, we will prove that every component $H$ of $\Gamma$ is a signed cycle with nullity $ 2 $. For a given component $H$ of $\Gamma$, we know $H$ contains no pendant vertices and distinct signed cycles of $H$ (if any) have no common vertices, otherwise if H has pendant vertices, then $\eta(H)\leq2 c(H)+p(H)-1$ by Theorem $3.1$, a contradiction with $\eta(H)=2 c(H)+p(H)$, Similarly, distinct signed cycles of $H$ (if any) have no common vertices. If we can prove that $H$ is just a unicyclic signed graph, then $H$ must be a signed cycle with nullity $ 2 $( by Lemma $ 2.2 $). For a contradiction, we assume $H$ contains at least two signed cycles.
	
	For $x, y \in V(H)$, we denote by $d(x, y)$ the length of a shortest path between $x$ and $y .$ By $\mathcal{C}_{H}$ denote the set of signed cycles of $H$. Let $C_{1} \neq C_{2} \in \mathcal{C}_{H}$, set
	
	$$
	d\left(C_{1}, C_{2}\right)=\min \left\{d(x, y): x \in V\left(C_{1}\right), y \in V\left(C_{2}\right)\right\}
	$$
	and set
	$$
	d\left(\mathcal{C}_{H}\right)=\max \left\{d\left(C_{1}, C_{2}\right): C_{1} \neq C_{2} \in \mathcal{C}_{H}\right\}.
	$$
  
  	Taking $x \in V\left(C_{1}\right), y \in V\left(C_{2}\right)$ such that $d(x, y)=d\left(C_{1}, C_{2}\right)=d\left(\mathcal{C}_{H}\right)$. Then $x$ is a unique vertex of $C_{1}$ with degree  $ 3 $, and $x$ is a cut-point of $H$. Suppose the size of $C_{1}$ is $l$, that is $C_{1}=C_{l}$, then $H-x$ has $P_{l-1}$ as a component. Denote $H-P_{l-1}$ by $H_{1}$. Then
	$$
	\theta\left(H_{1}\right)=c(H)-1, \ p\left(H_{1}\right)=1,
	$$
	and it is from Theorem $3.1$ that
	$$
	\eta\left(H_{1}\right) \leq 2 c\left(H_{1}\right)+p\left(H_{1}\right)-1=2 c\left(H_{1}\right) .
	$$
	
	\textbf{ Case 1.}  $ l \equiv 0 (mod \ 4) $
	
	If $C_{l}$ is positive, then $\eta\left(P_{l-1}\right)=1=\eta\left(C_{1}\right)-1$
	% (by Lemma $ 2.1 $ and Lemma $ 2.2 $)
	 , thus we have by Lemma $2.4(1)$ that
	$$
	\eta(H)=\eta\left(P_{l-1}\right)+\eta\left(H_{1}\right),
	$$
	from which it follows that
	$$
	2 c(H)=2\left(c\left(H_{1}\right)+1\right) \leq 1+2 c\left(H_{1}\right),
	$$
	a contradiction.
	
	If $C_{l}$ is negative, then $ \eta\left(P_{l-1}\right)=0 $ and $ \eta\left(C_{1}\right)=1 $. As $ \eta\left(P_{l-1}\right)=\eta\left(C_{1}\right)+1 $ , we have by Lemma $ 2.4(2) $ that 
	$$
	\eta(H)=\eta(H-x)-1=\eta\left(P_{l-1}\right)+\eta\left(H_{2}\right)-1=\eta\left(H_{2}\right),
	$$
	where $H_{2}=H-C_{1}$. Let $z$ be the adjacent vertex of $x$ outside $C_{1}$. If $d(z)=2$, then $z$ is a pendant vertex of $H_{2} ;$ and if $d(z) \geq 3$, then $z$ is not a pendant vertex of $\mathrm{H}_{2}$. In both cases, $p\left(\mathrm{H}_{2}\right) \leq 1$. Applying Theorem $3.1$, we have
	$$
	\eta\left(H_{2}\right) \leq 2 c\left(H_{2}\right) .
	$$
	then we have 
	$$ 2c(H)= \eta(H)=\eta(H_{2}) \leq 2 c(H_{2}) ,$$
	a contradiction to $ c(H)= c(H_{2}) +1 . $
	
\textbf{ Case 2.} $ l \equiv 2 (mod \ 4) $
	
	If $C_{l}$ is positive, then $ \eta(P_{l-1}) = 1 $ and $ \eta(P_{l-1}+x) = 0 $, that is  $ \eta(P_{l-1}) = \eta(P_{l-1}+x) +1 $, thus by Lemma $2.4(2)$ we have
	\begin{equation}
	\eta(H)=\eta(H-x)-1=\eta\left(P_{l-1}\right)+\eta\left(H_{2}\right)-1=\eta\left(H_{2}\right),    \tag{5}
	\end{equation}
	where $H_{2}=H-C_{1}$. Let $z$ be the adjacent vertex of $x$ outside $C_{1}$. If $d(z)=2$, then $z$ is a pendant vertex of $H_{2} ;$ and if $d(z) \geq 3$, then $z$ is not a pendant vertex of $\mathrm{H}_{2}$. In both cases, $p\left(\mathrm{H}_{2}\right) \leq 1$. Applying Theorem $3.1$, we have
	\begin{equation}
	\eta\left(H_{2}\right) \leq 2 \theta\left(H_{2}\right) .    \tag{6}
	\end{equation}
	Combining (5) and (6), we have
	$$
	2 c(H)=\eta(H)=\eta\left(H_{2}\right) \leq 2 c\left(H_{2}\right),
	$$
	a contradiction to $c(H)=c\left(H_{2}\right)+1$.
	
	If $C_{l}$ is negative, then $ \eta(P_{l-1}) = 1 $ and $ \eta(C_{1})=\eta(P_{l-1}+x) = 2$, that is  $ \eta(P_{l-1}) = \eta(C_{1})-1$, thus we have by Lemma $ 2.4(1)$ that 
	$$
	\eta(H)=\eta\left(P_{l-1}\right)+\eta\left(H_{1}\right),
	$$
	from which it follows that 
	$$ 	2 c(H)=\eta(H)=\eta\left(P_{l-1}\right)+\eta\left(H_{1}\right) \leq 1+2 c\left(H_{1}\right), $$
	also a contradiction. 
	
\textbf{ Case 3.} $l$ is odd.
 
 Then by Lemma $2.3$ we have
	\begin{equation}
	\eta(H) \leq \eta(H-x)+1=\eta\left(P_{l-1}\right)+\eta\left(H_{2}\right)+1=\eta\left(H_{2}\right)+1,   \tag{7}
	\end{equation}
	substituting $\eta(H)=2 c(H)=2 c\left(H_{2}\right)+2$ and $\eta\left(H_{2}\right) \leq 2 c\left(H_{2}\right)$ to $(7)$ we have
	$$
	2 c\left(\mathrm{H}_{2}\right)+2 \leq 2 c\left(\mathrm{H}_{2}\right)+1,
	$$
so, we get a contradiction.
	\end{proof}

By slightly modifying the proofs of  Lemma $ 3.2 $ and Corollary $ 3.3 $ in [6], we have the following results.
\begin{lemma} 
Let $\Gamma$ be a connected leaf-free signed graph which is obtained from two signed graphs $ \Gamma_{1} $ and $ \Gamma_{2} $ by identifying  the unique common vertex $ u $. If $\Gamma_{1}$ is a block of $\Gamma$ with $\eta\left(\Gamma_{1}\right) \leq 2 c\left(\Gamma_{1}\right)-2$ and $\Gamma_{2}$ is a signed graph with $c\left(\Gamma_{2}\right) \geq 2$, then $\eta(\Gamma) \leq 2 c(\Gamma)-2$.
\end{lemma}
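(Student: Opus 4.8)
The plan is to derive this statement (call it Lemma~5.1) directly from Lemma~4.6, which says that if the $K$-part of a coalescence $(H,v;K,u)$ is $2^{+}$-deficient, then so is the whole coalescence. First I would record two elementary facts. Since $\Gamma$ is connected and leaf-free, $p(\Gamma)=0$. And since $\eta(\Gamma_{1})\ge 0$, the hypothesis $\eta(\Gamma_{1})\le 2c(\Gamma_{1})-2$ forces $c(\Gamma_{1})\ge 1$, so the block $\Gamma_{1}$ is not a single edge $K_{2}$; a block that is not $K_{2}$ is $2$-connected on at least three vertices, hence has minimum degree at least $2$, so $p(\Gamma_{1})=0$. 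Consequently $\eta(\Gamma_{1})\le 2c(\Gamma_{1})-2=2c(\Gamma_{1})+p(\Gamma_{1})-2$, i.e.\ $\Gamma_{1}$ is $2^{+}$-deficient.

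Now $\Gamma$ is precisely the coalescence $(\Gamma_{2},u;\Gamma_{1},u)$ obtained by identifying the vertex $u$ of $\Gamma_{2}$ with the vertex $u$ of $\Gamma_{1}$, and its $K$-part is $\Gamma_{1}$, which we have just shown to be $2^{+}$-deficient. Applying Lemma~4.6 gives that $\Gamma$ is $2^{+}$-deficient, that is $\eta(\Gamma)\le 2c(\Gamma)+p(\Gamma)-2=2c(\Gamma)-2$, the last equality using $p(\Gamma)=0$. This completes the proof; note that along this route the hypothesis $c(\Gamma_{2})\ge 2$ is not used at all (it merely records the configuration in which Lemma~5.1 is later invoked), and there is essentially no obstacle, since the substantive content is already carried by Lemma~4.6.

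If instead one wants an argument in the spirit of the proofs cited from [6] that does not pass through Lemma~4.6, I would work through the cut-vertex $u$. Since $\Gamma_{1}$ is $2$-connected, $\Gamma_{1}-u$ is a single component of $\Gamma-u$ with $(\Gamma_{1}-u)+u=\Gamma_{1}$ and $\Gamma-(\Gamma_{1}-u)=\Gamma_{2}$, so by Lemma~2.3 exactly one of $\eta(\Gamma_{1}-u)=\eta(\Gamma_{1})-1,\ \eta(\Gamma_{1}),\ \eta(\Gamma_{1})+1$ holds. If $\eta(\Gamma_{1}-u)=\eta(\Gamma_{1})-1$, then Lemma~2.4(1) gives $\eta(\Gamma)=\eta(\Gamma_{1}-u)+\eta(\Gamma_{2})$; if $\eta(\Gamma_{1}-u)=\eta(\Gamma_{1})+1$, then Lemma~2.4(2) gives $\eta(\Gamma)=\eta(\Gamma-u)-1=\eta(\Gamma_{1})+\eta(\Gamma_{2}-u)$. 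Both of these finish using $\eta(\Gamma_{2})\le 2c(\Gamma_{2})$ (Theorem~3.1, as $\Gamma_{2}$ is connected with $p(\Gamma_{2})\le 1$) and $\eta(\Gamma_{2}-u)\le 2c(\Gamma_{2})$, where the latter follows by applying Theorem~3.1 to each component of $\Gamma_{2}-u$ --- which has no isolated vertices because $\Gamma$ is leaf-free --- together with parts (1) and (3) of Lemma~3.1. The hard part is the remaining case $\eta(\Gamma_{1}-u)=\eta(\Gamma_{1})$, in which Lemma~2.4 gives no information about the $\Gamma_{1}$-side. Here one tries to apply Lemma~2.4 instead to a component $W$ of $\Gamma_{2}-u$ with $\eta(W+u)\ne\eta(W)$; if no such $W$ exists one is left with the genuinely delicate possibility $\eta(\Gamma)=\eta(\Gamma-u)+1$, which yields only $\eta(\Gamma)\le 2c(\Gamma)-1$ and must be sharpened by establishing $\eta(\Gamma_{2}-u)\le 2c(\Gamma_{2})-1$. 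Proving this last inequality requires determining when the Theorem~3.1 bound is simultaneously tight on every component of $\Gamma_{2}-u$ --- which forces $\Gamma_{2}-u$ to be essentially a disjoint union of nullity-$2$ cycles joined to $u$ by bridges --- and then invoking Theorem~5.1 together with Lemmas~4.9 and~4.10 (a $1$-deficient connected signed graph has every cycle of nullity $2$ and admits no cycle-block of any other nullity) to contradict $\eta(\Gamma_{1})\le 2c(\Gamma_{1})-2$. This extremal bookkeeping is the only genuine difficulty, and it is exactly what the $2^{+}$-deficiency formalism of Lemma~4.6 allows one to bypass.
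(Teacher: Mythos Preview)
Your first argument is correct: once you observe that $\Gamma_{1}$ is a block with $c(\Gamma_{1})\ge 1$, hence $2$-connected with $p(\Gamma_{1})=0$, the hypothesis $\eta(\Gamma_{1})\le 2c(\Gamma_{1})-2$ says exactly that $\Gamma_{1}$ is $2^{+}$-deficient, and Lemma~4.6 applied to the coalescence $\Gamma=(\Gamma_{2},u;\Gamma_{1},u)$ yields $\eta(\Gamma)\le 2c(\Gamma)+p(\Gamma)-2=2c(\Gamma)-2$. You are also right that $c(\Gamma_{2})\ge 2$ plays no role in this derivation.

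The paper, however, does not prove Lemma~5.1 this way; it gives no self-contained argument at all and simply declares the result to follow ``by slightly modifying the proofs of Lemma~3.2 and Corollary~3.3 in~[6].'' The proof in~[6] is for unsigned graphs and proceeds by a direct cut-vertex case analysis at $u$, much closer in spirit to your second sketch (splitting on the three possibilities for $\eta(\Gamma_{1}-u)$ and bounding each side separately). What your first route buys is brevity: Lemma~4.6 has already packaged precisely that cut-vertex analysis in the $2^{+}$-deficiency language, so invoking it turns Lemma~5.1 into a one-line corollary. What the paper's indicated route buys is independence from the machinery of~[15] --- it stays within the toolkit of~[6] from which this lemma is imported. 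Your second sketch is therefore closer to the paper's intended proof, though (as you note) the case $\eta(\Gamma_{1}-u)=\eta(\Gamma_{1})$ is genuinely the delicate one there and you have only outlined how it would be handled; given that your first argument already settles the matter, there is no need to complete it.
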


\begin{corollary} 
	Let $\Gamma$ be a connected leaf-free signed graph with $c(\Gamma) \geq 3$ and $\eta(\Gamma)=2 c(\Gamma)-1$. If $\Gamma$ contains a pendant cycle $C_{t}$, then $ \eta(C_{t})=2 $.
\end{corollary}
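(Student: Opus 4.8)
The plan is to exploit the block/coalescence structure that a pendant cycle forces on $\Gamma$, and then invoke Lemma 5.1 to rule out $\eta(C_t)\neq 2$. This mirrors the argument of Corollary 3.3 in [6].

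First I would extract the structural consequences of $C_t$ being a pendant cycle. By definition $C_t$ contains a unique major vertex, say $u$, while every other vertex of $C_t$ has degree $2$ in $\Gamma$ and hence lies on no other cycle and in no block besides $C_t$. Since $c(\Gamma)\geq 3>1$, the graph $\Gamma$ properly contains $C_t$, so $u$ is a cut-vertex of $\Gamma$ and $C_t$ is a block of $\Gamma$. Setting $\Gamma_1=C_t$ and $\Gamma_2=\Gamma-(V(C_t)\setminus\{u\})$, we obtain $\Gamma=(\Gamma_1,u;\Gamma_2,u)$ with $V(\Gamma_1)\cap V(\Gamma_2)=\{u\}$, and both $\Gamma_1$ and $\Gamma_2$ connected. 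Because the cyclomatic number splits additively across a coalescence, $c(\Gamma)=c(\Gamma_1)+c(\Gamma_2)=1+c(\Gamma_2)$, so $c(\Gamma_2)=c(\Gamma)-1\geq 2$.

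Then I would argue by contradiction. Suppose $\eta(C_t)\neq 2$. By Lemma 2.2 the nullity of a signed cycle is either $0$ or $2$, so $\eta(\Gamma_1)=\eta(C_t)=0\leq 2c(\Gamma_1)-2$. Now $\Gamma$ is leaf-free, $\Gamma_1$ is a block of $\Gamma$ with $\eta(\Gamma_1)\leq 2c(\Gamma_1)-2$, and $c(\Gamma_2)\geq 2$; thus all hypotheses of Lemma 5.1 are satisfied, and it gives $\eta(\Gamma)\leq 2c(\Gamma)-2$. This contradicts $\eta(\Gamma)=2c(\Gamma)-1$. Hence $\eta(C_t)=2$.

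There is no genuinely difficult step here: the only points needing care are that a pendant cycle really is a block and that its major vertex is a cut-vertex (both immediate from the degree condition), together with the additivity of $c$ along the cut-vertex; the entire weight of the argument is carried by Lemma 5.1.
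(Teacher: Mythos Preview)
Your proof is correct and follows exactly the route the paper intends: the paper derives Corollary 5.1 as the signed-graph analogue of Corollary 3.3 in [6], obtained by applying Lemma 5.1 (the signed analogue of Lemma 3.2 in [6]) to the block decomposition at the pendant cycle, and this is precisely what you do. The verification that $\eta(C_t)=0$ forces $\eta(\Gamma_1)\leq 2c(\Gamma_1)-2$ and that $c(\Gamma_2)=c(\Gamma)-1\geq 2$ is all that is needed; your structural remarks about the pendant cycle being a block with a cut-vertex are accurate and complete.
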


%Next, we will prove some lemmas for the characterizations of the signed graphs with nullity $ 2 c(\Gamma)+p(\Gamma)-1 $ or $ 2 c(\Gamma)-1 $.
 
 A bicyclic graph is a simple connected graph in which the number of edges equals the number of vertices plus one. There are two basic bicyclic graphs: $\infty$-graph and $\theta$-graph. An $\infty$-graph, denoted by $\infty(p, q, l)$, is obtained from two vertex-disjoint cycles $C_{p}$ and $C_{q}$ by connecting one vertex of $C_{p}$ and one of $C_{q}$ with a path $P_{l}$ of length $l-1$ (in the case of $l=1$, identifying the above two vertices); and a $\theta$-graph, denoted by $\theta(p, q, l)$, is a union of three internally disjoint paths $P_{p+1}, P_{q+1}, P_{l+1}$ of length $p, q, l$ respectively, with common end vertices, where $p, q, l \geq 1$ and at most one of them is 1. Observe that any bicyclic graph $G$ is obtained from an $\infty$-graph or a $\theta$-graph (possibly) by attaching trees to some of its vertices.
 
 Now we give the characterization of the leaf-free bicyclic signed graph with nullity $3$.

\begin{Theorem}
Let $ \Gamma $ be a leaf-free bicyclic signed graph. Then $ \eta(\Gamma)=2c(\Gamma)-1=3 $ if and only if  $ \Gamma \cong \infty(l, k, x) $ or $ \theta(l^{'}, x^{'}, k^{'}) $, where the nullity of each cycle of $ \Gamma$ is $ 2 $, and $ x $ is odd, $ l^{'}, x^{'}, k^{'} $ are even. 
\end{Theorem}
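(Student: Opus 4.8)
The plan is to split according to the shape of $\Gamma$. Being leaf-free and bicyclic, $\Gamma$ is connected with $c(\Gamma)=2$ and $p(\Gamma)=0$, and since attaching any nontrivial tree to an $\infty$-graph or a $\theta$-graph would create a leaf, $\Gamma$ is exactly an $\infty$-graph $\infty(l,k,x)$ or a $\theta$-graph $\theta(l',x',k')$. Moreover $\eta(\Gamma)=2c(\Gamma)-1=3$ says precisely that $\Gamma$ is $1$-deficient, so Lemma~4.10 applies and gives at once that every cycle of $\Gamma$ has nullity $2$; this disposes of the part of the characterization common to the two families, and from now on I work under the standing hypothesis that each cycle has nullity $2$ --- whence, by Lemma~2.2, each cycle has even length.

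Consider first $\Gamma=\infty(l,k,x)$; then $C_l$ and $C_k$ have nullity $2$, so $l,k$ are even. Let $u_1$ be the cut-vertex of $\Gamma$ lying on $C_l$. Then $P_{l-1}$ is a component of $\Gamma-u_1$ with $\eta(P_{l-1})=1=\eta(C_l)-1=\eta(P_{l-1}+u_1)-1$, so Lemma~2.4(1) yields $\eta(\Gamma)=\eta(P_{l-1})+\eta(\Gamma-P_{l-1})=1+\eta(\Gamma-P_{l-1})$. The graph $\Gamma-P_{l-1}$ is the cycle $C_k$ with a pendant path of $x-1$ vertices attached at one of its vertices; repeatedly applying Lemma~2.8 strips this pendant path two vertices at a time, leaving $C_k$ itself when $x$ is odd and leaving (after one further application) $P_{k-1}$ when $x$ is even. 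Hence $\eta(\Gamma)=1+\eta(C_k)=3$ when $x$ is odd and $\eta(\Gamma)=1+\eta(P_{k-1})=2$ when $x$ is even. This proves sufficiency for the $\infty$-family and, in combination with the standing hypothesis, shows that $\eta(\Gamma)=3$ forces $x$ odd.

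Now consider $\Gamma=\theta(l',x',k')$, and first treat the necessity of ``$l',x',k'$ all even'', which is a parity count. The three cycles $C^{(1)},C^{(2)},C^{(3)}$ of $\Gamma$ have lengths $l'+x'$, $l'+k'$, $x'+k'$, all even by the standing hypothesis, so $l'\equiv x'\equiv k'\pmod 2$. Write these lengths as $2m_1,2m_2,2m_3$; by Lemma~2.2 the cycle $C^{(i)}$, which has nullity $2$, is negative exactly when $m_i$ is odd. Since every edge sign occurs in exactly two of the three cycles, $\sigma(C^{(1)})\sigma(C^{(2)})\sigma(C^{(3)})=+$, so the number of negative cycles --- i.e.\ the number of odd $m_i$ --- is even, whence $m_1+m_2+m_3=l'+x'+k'$ is even. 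Together with $l'\equiv x'\equiv k'\pmod 2$, this excludes the possibility that $l',x',k'$ are all odd, so they are all even.

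It remains to prove sufficiency for the $\theta$-family: if $l',x',k'$ are even and every cycle has nullity $2$, then $\eta(\theta(l',x',k'))=3$. The bound $\eta(\Gamma)\le 2c(\Gamma)-1=3$ is immediate from Theorem~3.1 (the two cycles of a $\theta$-graph share edges). For the reverse inequality I would exhibit a $3$-dimensional kernel of $A(\Gamma)$ directly: after a switching that makes two of the three paths entirely positive and concentrates the remaining sign on the edge of the third path incident to $b$ (where $a,b$ denote the two degree-$3$ vertices), I would build a candidate null vector $\phi$ from the four values $\phi(a),\phi(p_1),\phi(q_1),\phi(r_1)$ (the value at $a$ and at its three neighbours): the interior equations along each path determine $\phi$ uniquely along that path, the value $\phi(b)$ comes out the same computed along each of the three paths precisely because each cycle has even length of the correct residue modulo $4$, and the two branch equations at $a$ and at $b$ both reduce to the single linear relation $\phi(p_1)+\phi(q_1)+\phi(r_1)=0$; this leaves a $3$-parameter family of null vectors, so $\eta(\Gamma)=3$. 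I expect the main obstacle to be exactly this last point --- verifying that the branch equation at $b$ collapses to the same relation as the one at $a$, which is where both hypotheses (``$l',x',k'$ even'' and ``each cycle of nullity $2$'') are consumed. An alternative route that sidesteps the eigenvector bookkeeping is to apply Lemma~4.1(1) repeatedly to shorten every internal path of $\Gamma$ to length $2$ or $4$ (this preserves $\eta(\Gamma)$, the parities of the path lengths, and the nullity of each cycle), reducing the claim to the finitely many graphs $\theta(a,b,c)$ with $a,b,c\in\{2,4\}$ carrying an admissible sign pattern, and then to check $\eta=3$ in each by a direct rank computation (for instance $\theta(2,2,2)$ with all cycles positive is switching-equivalent to $K_{2,3}$, whose nullity is $3$).
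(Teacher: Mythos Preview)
Your argument is correct. The $\infty$-graph case is handled exactly as in the paper: apply Lemma~2.4(1) at the cut-vertex on $C_l$, then strip the pendant path via Lemma~2.8 to reduce to $C_k$ or $P_{k-1}$ according to the parity of $x$.

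For the $\theta$-graph, your treatment diverges from the paper in an instructive way. For \emph{necessity}, the paper argues by cases on the signs of two of the three cycles, assumes some path length is odd, and derives a contradiction via Lemmas~2.3 and~2.8; you instead give a clean global parity count: all three pairwise sums $l'+x'$, $l'+k'$, $x'+k'$ are even (forcing $l'\equiv x'\equiv k'\pmod 2$), each cycle is negative iff its half-length is odd (Lemma~2.2), and the identity $\sigma(C^{(1)})\sigma(C^{(2)})\sigma(C^{(3)})=+$ (every edge lies in exactly two of the three cycles) forces the number of odd half-lengths to be even, whence $l'+x'+k'$ is even, ruling out the all-odd possibility. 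This is shorter and avoids case analysis entirely. For \emph{sufficiency}, your second route --- use Lemma~4.1(1) to reduce each path length modulo $4$ down to $\{2,4\}$ and then verify $\eta=3$ on the finitely many resulting $\theta(a,b,c)$ with admissible signs --- is precisely the paper's method (the paper computes $\eta(\theta(4,4,4))=3$ and declares the remaining cases similar). Your first route, building a $3$-dimensional kernel directly after a suitable switching, is a valid alternative that the paper does not pursue; the point you flag (the branch equation at $b$ collapsing to the same relation as at $a$) is indeed where both hypotheses are used, and the verification is routine once the switching concentrates all nontrivial sign on a single edge at $b$.

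In summary: the $\infty$-part coincides with the paper, the $\theta$-necessity is a genuinely different and more economical argument, and the $\theta$-sufficiency (via your second route) is the paper's approach.
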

\begin{proof}
Since $ \Gamma $ is a leaf-free bicyclic signed graph and $ \eta(\Gamma)=2c(\Gamma)+p(\Gamma)-1=3 $, $ \Gamma $ is $ 1 $-deficient and 
$ \Gamma \cong \infty(l, k, x) $ or $ \theta(l^{'}, x^{'}, k^{'}) $. By Lemma $ 4.10$, we know the nullity of each cycle $ C_{t} $ of $ \Gamma $ is $ 2 $.
%positive and $ t \equiv 0( mod \ 4) $ or negative and $ t \equiv 2( mod \ 4) $.

\textbf{ Case 1.} $ \Gamma \cong \infty(l, k, x) $. (See Fig. 1)

Observe that $ v_{1} $ is a cut-vertex of $ \Gamma $, and $ \eta(C_{l})-1= \eta(C_{l}-v_{1})= 1 $ by Lemma $ 2.1 $ and $ 2.2 $, set $ \Gamma_{1}= \Gamma- (C_{l}-v_{1}) $, 
then we have $ \eta(\Gamma)= \eta(C_{l}-v_{1})+ \eta(\Gamma_{1}) $( using Lemma $ 2.4(1)$), thus $ \eta(\Gamma_{1})= 2 $. 

If $ x $ is even, then  $ \eta(\Gamma_{1})= \eta(\Gamma_{1}- v_{1}- v_{2})= \cdots = \eta(\Gamma_{1}- P_{x})= 1  $ by applying Lemma $ 2.9$ repeatedly, a contradiction, thus $ x $ is odd. 
\begin{figure}[!h]
	\centering
	\includegraphics{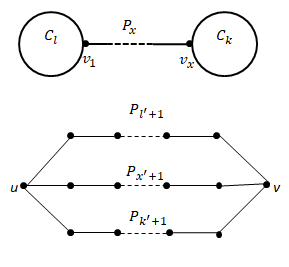}\\[1mm]
	{\small{  \bf Fig. 1~~}$\Gamma$ }
	\label{fig1}
\end{figure}

Conversely, if $ \Gamma \cong \infty(l, k, x) $, where $ \eta(C_{l})= \eta(C_{k})= 2 $ and $ x $ is odd, Observe that $ v_{1} $ is a cut-vertex of $ \Gamma $, and $ \eta(C_{l})-1= \eta(C_{l}-v_{1})= 1 $ by Lemma $ 2.1 $ and $ 2.2 $, set $ \Gamma_{1}= \Gamma- (C_{l}-v_{1}) $, 
then we have $ \eta(\Gamma)= \eta(C_{l}-v_{1})+ \eta(\Gamma_{1}) $( using Lemma $ 2.4(1)$), further $ \eta(\Gamma_{1})= \eta(\Gamma_{1}- v_{1}- v_{2})= \cdots = \eta(C_{k})= 2  $ by applying Lemma $ 2.8$ repeatedly, thus $ \eta(\Gamma)= 3= 2c(\Gamma)-1$. 

\textbf{ Case 2.} $ \Gamma \cong \theta(l^{'}, x^{'}, k^{'}) $, where $ \eta(C_{l^{'}+x^{'}})= \eta(C_{x^{'}+k^{'}})= 2 $  and $ l^{'}, x^{'}, k^{'} $ are even. (See fig. 1)

\textbf{ Case 2.1.} $ \sigma(C_{l^{'}+x^{'}})= \sigma(C_{k^{'}+x^{'}})= + $, then $ l^{'}+x^{'} \equiv 0( mod \ 4) $ and $ k^{'}+x^{'} \equiv 0( mod \ 4) $. If $ l^{'} $ is odd, then $ k^{'} $, $ x^{'} $ are odd. By Lemma $ 2.3 $, we have $ \eta(\Gamma) \leqslant \eta(\Gamma - u)+ 1 $, thus $ \eta(\Gamma - u) \geqslant 2 $, whereas $ \eta(\Gamma - u) =0 $ by applying Lemma $ 2.8$ repeatedly, a contradiction, so $ l^{'}, x^{'}, k^{'} $ are even.

If $ l^{'} \equiv 0( mod \ 4) $, then $ k^{'}=x^{'} \equiv 0( mod \ 4) $, by applying Lemma $ 4.1 $ repeatedly, we have $ \eta(\Gamma)= \eta(\theta(4, 4, 4)) $, where each cycle of $ \theta(4, 4, 4) $ is positive. By calculations, we can easily konw $ \eta(\theta(4, 4, 4))= 3 $. If $ l^{'} \equiv 2( mod \ 4) $, then $ k^{'}=x^{'} \equiv 2( mod \ 4) $, the proof is similar to the foregoing. 

As for $ \sigma(C_{l^{'}+x^{'}})= \sigma(C_{k^{'}+x^{'}})= - $ or $ \sigma(C_{k^{'}+x^{'}})= - $ and $ \sigma(C_{l^{'}+x^{'}})= + $, the proof is similar to case $ 2.1 $.

According to above discussion, the conclusion holds. 
\end{proof}

By slightly modifying the proofs of  Lemma $ 3.8 $ in [15] and Lemma $ 3.4 $ in [6], we have the following Lemmas for signed graph.

\begin{lemma}
	Let $\Gamma=\theta(k, l, m)$  be a bicyclic signed graph such that 
	the nullity of each cycle of $ \Gamma $ is $ 2 $ and 
	$k, l, m $ are even, then $\eta(\Gamma-v)=2$ for any vertex $ v $ in $ \Gamma $.

\end{lemma}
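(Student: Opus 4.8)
The plan is to fix notation and then run a short case analysis on the position of $v$. Write $\Gamma=\theta(k,l,m)$, let $u$ and $w$ be its two vertices of degree $3$, and let $P^{(1)},P^{(2)},P^{(3)}$ be the three internally disjoint $u$--$w$ paths, of lengths (numbers of edges) $k,l,m$, all even by hypothesis; write $P^{(1)}=u\,a_1a_2\cdots a_{k-1}\,w$, and similarly for $P^{(2)},P^{(3)}$. Every vertex of $\Gamma$ is $u$, or $w$, or an internal vertex of one of the three paths, and since permuting the three paths and swapping $u,w$ preserves both hypotheses, it is enough to treat $v=u$ and $v=a_i$ for $1\le i\le k-1$.

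For $v=u$ (the case $v=w$ being identical): $\Gamma-u$ is the tree obtained from $P^{(1)},P^{(2)},P^{(3)}$ by deleting $u$, namely the spider with unique major vertex $w$ and three legs of lengths $k-1,\,l-1,\,m-1$. These are all odd, so condition~(i) of Lemma~4.2 holds. For condition~(ii), deleting any single leg leaves a path whose order is one of $l+m-1,\ k+m-1,\ k+l-1$, hence odd, so that path (having two leaves) has nullity $1=p-1$; and in that path $w$ occupies an even position (namely $l$, $k$ or $k$), so $w$ is covered, because in an odd path the uncovered vertices are exactly those in odd positions. Thus Lemma~4.2 applies and $\eta(\Gamma-u)=p(\Gamma-u)-1=2$.

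For $v=a_i$ with $1\le i\le k-1$: deleting $a_i$ keeps $P^{(2)}\cup P^{(3)}$ intact and splits $P^{(1)}$ into two pendant pieces, so $\Gamma-a_i$ is the signed cycle $C_{l+m}$ carrying a pendant path on $i-1$ vertices at $u$ and a pendant path on $k-1-i$ vertices at $w$; since $(i-1)+(k-1-i)=k-2$ is even, these two pendant paths have the same parity. If both have even order, repeated use of Lemma~2.8 deletes both pendant paths entirely and leaves $C_{l+m}$, so $\eta(\Gamma-a_i)=\eta(C_{l+m})=2$, because $C_{l+m}$ is a cycle of $\Gamma$ and hence has nullity $2$ by hypothesis. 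If both have odd order, repeated use of Lemma~2.8 first shortens each pendant path to a single vertex, and one further application (removing the pendant vertex at $u$ together with $u$) replaces $C_{l+m}$ by $P_{l+m-1}$ and leaves the other pendant vertex attached at $w$, which then occupies the (even) position $l$ of $P_{l+m-1}$; the result is the spider with major vertex $w$ and legs $l-1,\,m-1,\,1$, all odd, whose nullity is $2$ by exactly the Lemma~4.2 argument of the previous paragraph. Hence $\eta(\Gamma-v)=2$ in every case.

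The vertex-counting and parity bookkeeping is routine. The step that needs care is the odd-order subcase: one must verify that after the repeated reductions via Lemma~2.8 the surviving graph is genuinely a three-legged spider, with all legs odd and the branch vertex in an even position of each path obtained by deleting one leg, so that Lemma~4.2 can be quoted verbatim. The degenerate small instances (for example $k=2$, where a pendant piece is empty, or $l+m=4$) are already subsumed by the general argument and require no separate treatment.
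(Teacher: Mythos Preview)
Your argument is correct. The paper does not actually prove this lemma; it only states that the result follows ``by slightly modifying the proofs of Lemma~3.8 in~[15] and Lemma~3.4 in~[6]'', so there is no in-paper proof to compare against in detail. Your case analysis ($v$ a degree-$3$ vertex versus $v$ an internal path vertex, the latter split according to the parity of the two stubs) is a clean and self-contained way to establish the claim.

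A couple of minor remarks. In the $v=u$ case your parenthetical ``(namely $l$, $k$ or $k$)'' is garbled; what you mean is that after deleting each of the three legs in turn, $w$ sits at position $l$, $k$, or $k$ (equivalently $m$, $l$) along the resulting odd path, and these are all even since $k,l,m$ are even. In the odd-stub subcase you should state explicitly that the three-legged spider at the end has legs $l-1$, $m-1$, $1$, and that condition~(ii) of Lemma~4.2 is checked by removing each of these legs in turn (not just one), yielding odd paths $P_{l+m-1}$, $P_{m+1}$, $P_{l+1}$ with $w$ in even position each time; you sketch this but the phrase ``by exactly the Lemma~4.2 argument of the previous paragraph'' is a slight overstatement since the leg lengths differ. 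These are presentation issues only; the mathematics is sound, and the signed-graph aspect is handled correctly (the only place signs matter is the surviving cycle $C_{l+m}$, which has nullity~$2$ by hypothesis, while all tree computations are sign-independent).
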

\begin{lemma}
Let $\Gamma$ be a connected leaf-free signed graph with $c(\Gamma) \geq 3$. If $\eta(\Gamma)=2 c(\Gamma)-1$, then

$ 	(1) $ $ \Gamma $ contains a cut-vertex and a pendant cycle.
	
$ 	(2)$ Let $\mathrm{C_{t}}$ be a pendant cycle and $H_{1}$ be the maximal leaf-free subgraph of $\Gamma-C_{t}$. Then the internal path connecting $\Gamma$ and $H_{1}$ is odd.
\end{lemma}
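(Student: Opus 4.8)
The starting point is that $\Gamma$ is $1$-deficient: being leaf-free it has $p(\Gamma)=0$, so $\eta(\Gamma)=2c(\Gamma)-1=2c(\Gamma)+p(\Gamma)-1$, and the Section-$4$ tools for $1$-deficient graphs apply both to $\Gamma$ and to its blocks. The cut-vertex half of $(1)$ is Lemma $4.3$; what remains is to exhibit a pendant cycle and to prove $(2)$, and I would carry out both through the block--cut tree of $\Gamma$.

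\emph{Part (1).} Let $B$ be an end block of $\Gamma$ with cut-vertex $v$, and write $\Gamma_{2}:=\Gamma-(B-v)$, so that $\Gamma$ is the coalescence of $B$ and $\Gamma_{2}$ at $v$. Since $\Gamma$ has no leaf, $B\ne K_{2}$, hence $B$ is $2$-connected and leaf-free, and $\Gamma_{2}$ has at most one leaf (namely $v$). As $\Gamma$ is $1$-deficient it is not $2^{+}$-deficient, so by the contrapositive of Lemma $4.6$, $B$ is not $2^{+}$-deficient, i.e.\ $\eta(B)\ge 2c(B)-1$. If $c(B)\ge 2$, then $B$ is connected, leaf-free and not a single cycle, so Theorems $3.1$ and $5.1$ give $\eta(B)\le 2c(B)-1$, whence $\eta(B)=2c(B)-1$; but for $c(B)\ge 3$ Lemma $4.3$ applied to $B$ would then produce a cut-vertex of $B$, contradicting $2$-connectivity. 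So $c(B)\in\{1,2\}$. Suppose $c(B)=2$; then $B$ is bicyclic with $\eta(B)=3$, so by Theorem $5.2$ (and $2$-connectivity excludes the $\infty$-graph case) $B=\theta(l',x',k')$ with each cycle of nullity $2$ and $l',x',k'$ even. By Lemma $5.2$, $\eta(B-v)=2=\eta(B)-1$, and $B-v$ is a component of $\Gamma-v$, so Lemma $2.4(1)$ gives $\eta(\Gamma)=\eta(B-v)+\eta(\Gamma_{2})=2+\eta(\Gamma_{2})$, i.e.\ $\eta(\Gamma_{2})=2c(\Gamma)-3=2c(\Gamma_{2})+1$; this is impossible, since $\eta(\Gamma_{2})\le 2c(\Gamma_{2})$ by Theorem $3.1$ (as $p(\Gamma_{2})\le 1$). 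Hence $c(B)=1$, $B$ is a cycle, and since $c(\Gamma)\ge 3$ rules out $\Gamma=\infty(m,k,1)$, Lemma $4.9(2)$ shows $B$ is a pendant cycle.

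\emph{Part (2).} Let $C_{t}$ be a pendant cycle, with $w$ its unique major vertex. By Corollary $5.1$, $\eta(C_{t})=2$, so $t$ is even (Lemma $2.2$) and $\eta(C_{t}-w)=\eta(P_{t-1})=1=\eta(C_{t})-1$ (Lemma $2.1$). As $C_{t}-w$ is a component of $\Gamma-w$, Lemma $2.4(1)$ gives $\eta(\Gamma)=1+\eta(\Gamma'')$, where $\Gamma'':=\Gamma-(C_{t}-w)$, and $c(\Gamma'')=c(\Gamma)-1\ge 2$, so $\eta(\Gamma'')=2c(\Gamma'')$. In $\Gamma''$ every vertex other than $w$ keeps its $\Gamma$-degree (being non-adjacent to the interior of $C_{t}$), while $w$ has degree $d(w)-2$ there. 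If $d(w)\ge 4$, then $\Gamma''$ is connected (the interior of $C_{t}$ meets the rest of $\Gamma$ only at $w$), leaf-free and has $\eta(\Gamma'')=2c(\Gamma'')$, which by Theorem $5.1$ forces $\Gamma''$ to be a single cycle, contradicting $c(\Gamma'')\ge 2$. Hence $d(w)=3$, $w$ is a pendant vertex of $\Gamma''$, and deleting leaves from $\Gamma''$ one finds that $\Gamma''$ is $H_{1}$ with the internal path $P=wx_{1}\cdots x_{k}$ (from $C_{t}$ to the first vertex $x_{k}$ of degree $\ge 3$ along it, which lies in $H_{1}$) attached as a pendant path at $x_{k}$. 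Peeling $P$ repeatedly by Lemma $2.8$ yields $\eta(\Gamma'')=\eta(H_{1})$ when $k$ is even and $\eta(\Gamma'')=\eta(H_{1}-x_{k})$ when $k$ is odd. If $k$ were even, then, since $H_{1}$ is connected, leaf-free and not a single cycle (as $c(H_{1})=c(\Gamma)-1\ge 2$), Theorem $5.1$ gives $\eta(H_{1})\le 2c(H_{1})-1$, so $\eta(\Gamma'')=\eta(H_{1})\le 2c(\Gamma'')-1<2c(\Gamma'')=\eta(\Gamma'')$, a contradiction. Therefore $k$ is odd, i.e.\ the internal path is odd.

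The routine bookkeeping is to verify that $B-v$ (resp.\ $C_{t}-w$) is exactly one component of the relevant vertex-deleted graph, that deleting the interior of a pendant cycle preserves connectedness and lowers $c$ by $1$, and that $\Gamma''$ genuinely decomposes as $H_{1}$ plus the pendant internal path, so that the iterated Lemma $2.8$ does what is claimed. I expect the real obstacle to be the $c(B)=2$ subcase of Part $(1)$: this is the single place where the crude inequalities are not enough, and one must invoke the exact classification of $1$-deficient bicyclic blocks (Theorem $5.2$) and the value $\eta(B-v)=2$ for such a block (Lemma $5.2$) to turn the coalescence identity into the contradiction $\eta(\Gamma_{2})=2c(\Gamma_{2})+1$.
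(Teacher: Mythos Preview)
Your argument is correct. The paper itself does not supply a proof of this lemma; it merely states that the result follows ``by slightly modifying the proofs of Lemma~3.8 in [15] and Lemma~3.4 in [6]'', so there is no in-paper proof to compare against line by line. What you have written is a complete, self-contained proof built entirely from tools already available at this point of the paper (Lemmas~2.1, 2.2, 2.4, 2.8, 4.3, 4.6, 4.9, 5.2, Corollary~5.1, Theorems~3.1, 5.1, 5.2), with no circularity: none of Theorem~5.1, Theorem~5.2, Lemma~5.2, or Corollary~5.1 invokes Lemma~5.3 in its proof.

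A couple of small confirmations of the points you flagged as ``routine bookkeeping'': in Part~(1), $\Gamma_{2}$ is connected (end-block removal) with $c(\Gamma_{2})=c(\Gamma)-2\ge 1$, so Theorem~3.1 legitimately yields $\eta(\Gamma_{2})\le 2c(\Gamma_{2})$; in Part~(2), $\Gamma''$ is connected because the interior of $C_{t}$ meets $\Gamma$ only at $w$, and the decomposition $\Gamma''=H_{1}\cup P$ is exactly what repeated leaf-deletion of $\Gamma-C_{t}$ produces, since every vertex off the path retains its $\Gamma$-degree $\ge 2$ and $x_{k}$ ends with degree $d_{\Gamma}(x_{k})-1\ge 2$. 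Your identification of the $c(B)=2$ subcase as the crux is apt: it is precisely where the sharp classification (Theorem~5.2) and the exact value $\eta(B-v)=2$ (Lemma~5.2) are needed, and your use of Lemma~2.4(1) there to force $\eta(\Gamma_{2})=2c(\Gamma_{2})+1$ is clean. The overall strategy---end-block analysis to pin down $c(B)=1$, then Lemma~4.9(2) for the pendant cycle, then peeling the pendant path via Lemma~2.8 and invoking Theorem~5.1 to exclude even length---is the natural route and matches what one would expect the cited references to do.
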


Secondly, we give the characterization of the connected leaf-free signed graph with  nullity $ 2 c(\Gamma)-1 $ and $c(\Gamma) \geq 3$.

\begin{Theorem}
	Let $\Gamma$ be a connected leaf-free signed graph with $c(\Gamma) \geq 3$. Then $\eta(\Gamma)=2 c(\Gamma)-1$ if and only if $\Gamma$ is a signed graph which is obtained from a tree $T$ with $p(T)=c(\Gamma)$ and $\eta(T)=p(T)-1$ by attaching a cycle with nullity $ 2 $ on each leaf of $T$.
\end{Theorem}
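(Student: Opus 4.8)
This direction is a routine computation. Let $\Gamma$ be obtained from $T$ by attaching a nullity-$2$ cycle $C_{t_i}$ at each leaf $\ell_i$ of $T$ $(1\le i\le p(T)=c(\Gamma))$, where $\eta(T)=p(T)-1$. For each $i$ the vertex $\ell_i$ is a cut-vertex of $\Gamma$ and $C_{t_i}-\ell_i$ is a component of $\Gamma-\ell_i$; since $\eta(C_{t_i})=2$, Lemma $2.2$ forces $t_i$ even, so $t_i-1$ is odd and, by Lemma $2.1$, $\eta(C_{t_i}-\ell_i)=\eta(P_{t_i-1})=1=\eta(C_{t_i})-1=\eta\bigl((C_{t_i}-\ell_i)+\ell_i\bigr)-1$. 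Hence Lemma $2.4(1)$ gives $\eta(\Gamma)=1+\eta\bigl(\Gamma-(C_{t_i}-\ell_i)\bigr)$. Removing all $c(\Gamma)$ of these paths one after another leaves exactly $T$, so $\eta(\Gamma)=c(\Gamma)+\eta(T)=c(\Gamma)+p(T)-1=2c(\Gamma)-1$.

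\textbf{Necessity: reduction to a $1$-deficient graph with one leaf.} For the converse I would induct on $c(\Gamma)$. By Lemma $5.3(1)$ and Corollary $5.1$, $\Gamma$ has a pendant cycle $C_t$ with $\eta(C_t)=2$; let $u$ be its unique major vertex. As above $t$ is even and $\eta(C_t-u)=1=\eta(C_t)-1$, so, $u$ being a cut-vertex, Lemma $2.4(1)$ gives $\eta(\Gamma)=1+\eta(\Gamma_0)$ with $\Gamma_0:=\Gamma-(C_t-u)$ and $c(\Gamma_0)=c(\Gamma)-1$; hence $\eta(\Gamma_0)=2c(\Gamma_0)$. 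If $d_\Gamma(u)\ge 4$, then $\Gamma_0$ is connected and leaf-free with $c(\Gamma_0)\ge 2$ and $\eta(\Gamma_0)=2c(\Gamma_0)=2c(\Gamma_0)+p(\Gamma_0)$, so Theorem $5.1$ would force $\Gamma_0$ to be a single nullity-$2$ cycle, a contradiction. Therefore $d_\Gamma(u)=3$, the vertex $u$ is the unique pendant vertex of $\Gamma_0$, $p(\Gamma_0)=1$, and $\Gamma_0$ is $1$-deficient.

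\textbf{The heart of the argument.} It remains to describe a $1$-deficient connected signed graph $\Lambda$ that carries exactly one pendant vertex and then reattach $C_t$. I would establish, as an auxiliary statement by induction on the cyclomatic number, that \emph{every $1$-deficient connected signed graph with at least one pendant vertex is obtained from a $1$-deficient tree $T'$ (with $\eta(T')=p(T')-1$, as characterized in Lemma $4.2$) by attaching a nullity-$2$ cycle at $c$ of its leaves, $c$ being the cyclomatic number.} The base case (cyclomatic number $1$) is Lemma $4.8$, together with the observation that the leaves of $\Lambda$ are exactly the leaves of its underlying tree other than the one bearing the cycle. For the inductive step, Lemmas $4.9$ and $4.10$ force every cycle-block of $\Lambda$ to be a pendant cycle of nullity $2$ unless $\Lambda$ has an $\infty(m,k,1)$- or $\theta$-block; these are excluded because an $\infty(m,k,1)$-graph or a $\theta$-graph with a pendant tree attached is never $1$-deficient (this uses Lemma $5.2$ and the deficiency Lemmas $4.6$ and $4.7$). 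The argument of the previous paragraph (again via Theorem $5.1$) shows the major vertex of the chosen pendant cycle has degree $3$, so peeling it returns a smaller $1$-deficient graph with a pendant vertex, to which the induction hypothesis applies; the covered-vertex condition in Lemma $4.2$, used through Lemma $4.7$, guarantees that restoring the corresponding leg keeps the underlying tree $1$-deficient. Granting this for $\Lambda=\Gamma_0$, reattaching $C_t$ at $u$ makes $u$ a leaf of $T_{\Gamma_0}$ that now carries a nullity-$2$ cycle, so $\Gamma$ is obtained from $T:=T_{\Gamma_0}$ — which has $p(T)=c(\Gamma_0)+1=c(\Gamma)$ leaves and $\eta(T)=p(T)-1$ — by attaching a nullity-$2$ cycle at every leaf, which is the required description. (The smallest instance $c(\Gamma)=3$ is subsumed: there $\Gamma_0$ is bicyclic with one leaf, and one reduction step lands in the base case.)

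\textbf{The main obstacle.} The genuinely delicate part is the analysis of $1$-deficient graphs \emph{with} pendant vertices in the auxiliary statement: once a pendant cycle has been peeled off, the graph is no longer leaf-free, so Lemma $5.3$ and the easy consequences of leaf-freeness are unavailable, and one must argue directly that $\theta$- and $\infty(m,k,1)$-blocks cannot occur in this regime — in sharp contrast with the leaf-free bicyclic classification of Theorem $5.2$, where $\theta$-graphs genuinely do appear — and that the property $\eta(T')=p(T')-1$ survives every reduction. Beyond that, the proof is parity bookkeeping, driven by Lemma $2.2$ (a pendant nullity-$2$ cycle has even length), Lemma $5.3(2)$ (the connecting internal paths are odd), and the nullity-transfer Lemmas $2.4(1)$ and $2.8$.
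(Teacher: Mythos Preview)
Your approach is correct but follows a genuinely different route from the paper's.

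For sufficiency, you peel off each pendant nullity-$2$ cycle one at a time via Lemma~$2.4(1)$, landing on $T$ after $c(\Gamma)$ steps. The paper instead replaces every pendant cycle by a nullity-$2$ quadrangle (Lemma~$4.1$), deletes one vertex from each quadrangle to preserve rank (Lemma~$2.7$), and then strips the resulting pendant $P_3$'s via Lemma~$2.8$. Your argument is shorter and avoids the quadrangle machinery.

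For necessity, the paper inducts \emph{within} the class of connected leaf-free graphs: after peeling a pendant cycle it jumps to the maximal leaf-free subgraph $H$ of $\Gamma-C$, shows $\eta(H)=2c(H)-1$ via Lemma~$2.4(2)$, applies the induction hypothesis to $H$, and then separately rules out the possibility that $\Gamma$ has only $c-1$ pendant cycles by another quadrangle reduction. This forces a detailed case analysis at the base $c(\Gamma)=3$ (one, two, or three pendant cycles). You instead leave the leaf-free world immediately: one peel produces a $1$-deficient graph $\Gamma_0$ with a single leaf, and your auxiliary statement characterizes all $1$-deficient graphs with $p\ge 1$---which is exactly the form-$(1)$ part of the paper's later Theorem~$5.4$. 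In effect you prove Theorems~$5.3$ and that piece of $5.4$ simultaneously rather than sequentially, trading the paper's concrete $c=3$ casework for a block-structure argument (rule out $\theta$-blocks as in the paper's Theorem~$5.4$, Case~$1$, and rule out blocks with $c\ge 3$ via Lemma~$4.3$). Two small corrections: in the auxiliary inductive step the degree-$3$ conclusion comes from Theorem~$3.1$ (since $p(\Lambda_0)\ge 1$) rather than Theorem~$5.1$; and you should say explicitly that blocks with $c\ge 3$ are excluded by Lemma~$4.3$, since Lemma~$4.9$ only handles cycle-blocks. With those two lines added, the sketch closes.
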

\begin{proof}
	We first prove the sufficiency part. Let $T$ be a tree with $p(T)=c(\Gamma)$ and $\eta(T)=p(T)-1$. Let $v_{1}, \cdots, v_{p(T)}$ be all leaves of $T$. $\Gamma$ is a signed graph obtained from $T$ by attaching a cycle with nullity $ 2 $, say $C_{i}$, on each $v_{i}(i \in\{1, \cdots, p(T)\})$. Firstly, we replace each $C_{i}$ of $\Gamma$ by a quadrangle $C_{4}^{i}$ with nullity $ 2 $, and denote the resulting signed graph by $\Gamma^{*}$. Then by Lemma $4.1$, we have $\eta(\Gamma)=\eta\left(\Gamma^{*}\right)$. Secondly, let $\Gamma^{**}$ be a signed graph which is obtained from $\Gamma^{*}$ by removing one vertex of each $C_{4}^{i},(i \in\{1, \cdots, p(T)\})$ which is adjacent to $v_{i}$. Then by Lemma $ 2.7$, $r\left(\Gamma^{*}\right)=r\left(\Gamma^{* *}\right)$. Moreover, by applying Lemma $ 2.8$ several times, we have $\eta\left(\Gamma^{* *}\right)=\eta(T)=p(T)-1=c(\Gamma)-1$. Therefore,
	$$
	\begin{aligned}
	r\left(\Gamma^{*}\right) &=r\left(\Gamma^{**}\right) \\
	n\left(\Gamma^{*}\right)-\eta\left(\Gamma^{*}\right) &=n\left(\Gamma^{* *}\right)-\eta\left(\Gamma^{**}\right) \\
	n\left(\Gamma^{*}\right)-\eta(\Gamma) &=\left(n\left(\Gamma^{*}\right)-c(\Gamma)\right)-(c(\Gamma)-1) \\
	\eta(\Gamma) &=2 c(\Gamma)-1 .
	\end{aligned}
	$$
	
	Now we will prove the necessity part. By Lemma $ 5.3(1) $ and Corollary $ 5.1 $, we know $ \Gamma $ contains a pendant cycle $ C $ with the nullity $ 2 $.
	
	Next, we prove the conclusion by induction on $c(\Gamma)$. 
	
	When $c(\Gamma)=3$, if there is only one pendant cycle in $\Gamma$, say $C_{1}$, then $\Gamma$ can be regarded as a signed graph obtained from $C_{1}$ and $\theta \cong \theta(l, x, k)$ by connecting $v_{1} \in V\left(C_{1}\right)$ and $v_{s} \in V(\theta)$ with a path $P\left(v_{1}, \cdots, v_{s}\right), s \geq 1$ (See Fig. 2).
	
	\begin{figure}[!h]
		\centering
		\includegraphics{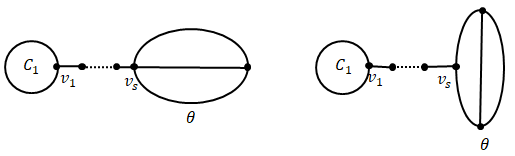}\\[1mm]
		{\small{  \bf Fig. 2~~}Leaf-free tricyclic signed graph with only one pendant cycle }
		\label{fig1}
	\end{figure}
	
	By Lemma $5.3(2) $, $s-1$ is odd, then we have $\eta\left(C_{1}\right)=\eta\left(C_{1}+\left\{v_{2}, \cdots, v_{s-1}\right\}\right)$ and $\eta\left(C_{1}-v_{1}\right)=\eta\left(\left(C_{1}-\right.\right.$ $\left.\left.v_{1}\right)+\left\{v_{1}, \cdots, v_{s}\right\}\right)=\eta\left(C_{1}+\left\{v_{2}, \cdots, v_{s}\right\}\right)$ by applying Lemma $ 2.8$ several times. Moreover, by Lemma $ 2.1 $ and $ 2.2 $, we have $ \eta\left(C_{1}\right)= \eta\left(C_{1}-v_{1}\right)+1 $. 
	Therefore, $v_{s}$ is a cut-vertex of $\Gamma$ such that $\eta\left(C_{1}+\left\{v_{2}, \cdots, v_{s-1}\right\}\right)=\eta\left(C_{1}+\left\{v_{2}, \cdots, v_{s}\right\}\right)+1$. By Lemma $ 2.4(2)$, we have
	$$
	\begin{aligned}
	\eta(\Gamma) &=\eta\left(\Gamma-v_{s}\right)-1 \\
	&=\eta\left(C_{1}+\left\{v_{2}, \cdots, v_{s-1}\right\}\right)+\eta\left(\theta-v_{s}\right)-1 \\
	&=\eta\left(C_{1}\right)+\eta\left(\theta-v_{s}\right)-1 \\
	&=2+\eta\left(\theta-v_{s}\right)-1 \\
	&=\eta\left(\theta-v_{s}\right)+1 .
	\end{aligned}
	$$
	
	When $\Gamma$ is the first graph in Fig. 2, clearly, $\theta-v_{s}$ is a tree with at most 3 leaves. By Theorem $3.1$, we have $\eta\left(\theta-v_{s}\right) \leq 3-1=2$. Then we have $\eta(\Gamma)=\eta\left(\theta-v_{s}\right)+1 \leq 2+1 \neq 5$, a contradiction.
	
	When $\Gamma$ is the second graph in Fig. 2, $\theta-v_{s}$ is a connected unicyclic signed graph and at most $ 2 $ leaves. By Theorem $ 3.1 $, we have $\eta\left(\theta-v_{s}\right) \leq 2 c\left(\theta-v_{s}\right)+2-1=3$. Then we have $\eta(\Gamma)=\eta\left(\theta-v_{s}\right)+1 \leq 3+1 \neq 5$, a contradiction.
	
	If there are exactly two pendant cycles in $\Gamma$, say $C_{1}$ and $C_{2}$, then $\Gamma$ can be regarded as a graph shown in top of Fig. 3. 
	
	\begin{figure}[!h]
		\centering
		\includegraphics{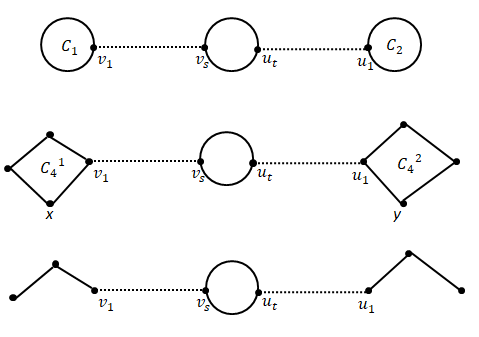}\\[1mm]
		{\small{  \bf Fig. 3~~} $\Gamma, \  \Gamma^{*} \  and \  \Gamma^{* *} $   }
		\label{fig1}
	\end{figure}
	
	By Corollary $ 5.1 $, $ \eta(C_{1})= \eta(C_{1})= 2 $. We replace $C_{i}(i=1,2)$ of $\Gamma$ by a quadrangle $C_{4}^{i}$, and denote the resulting signed graph shown in middle of Fig. 4 by $\Gamma^{*}$, where $ \eta(C_{4}^{i})= 2$. Then by Lemma $4.1$, we have $\eta(\Gamma)=\eta\left(\Gamma^{*}\right)$. Let $\Gamma^{**}$ be a signed graph which is obtained from $\Gamma^{*}$ by removing $x \in V\left(C_{4}^{1}\right)$ and $y \in V\left(C_{4}^{2}\right)$ (See third graph in Fig. 3). Then by Lemma $ 2.7$, $r\left(\Gamma^{*}\right)=r\left(\Gamma^{* *}\right)$. Therefore,
	$$
	\begin{aligned}
	\eta(\Gamma) &=\eta\left(\Gamma^{*}\right)=n\left(\Gamma^{*}\right)-r\left(\Gamma^{*}\right)=\left(n\left(\Gamma^{* *}\right)+2\right)-r\left(\Gamma^{* *}\right)=n\left(\Gamma^{* *}\right)+2. 
	\end{aligned}
	$$
	Then, $\eta\left(\Gamma^{* *}\right)=3$. However, by Lemma $ 5.3( 2 ) $, $ P\left(v_{1}, \cdots, v_{s}\right)$ and $P\left(u_{1}, \cdots, u_{t}\right) $ in $\Gamma^{* *}$ is odd, that is, $ t $ and $ s $ are even. By applying Lemma $ 2.8$ repeatedly, we have $\eta\left(\Gamma^{**}\right) \leq 2$, which leads to a contradiction. Therefore, all cycles of $\Gamma$ are pendant cycles.
	
	Let $C_{1}, C_{2}$ and $C_{3}$ be all pendant cycles of $ \Gamma $. By Corollary $ 5.1 $, $ \eta (C_{i})= 2(i=1,2,3)$. Similarly, we replace each $C_{i}(i=1,2,3)$ of $\Gamma$ by a quadrangle $C_{4}^{i}$ with the nullity $ 2 $, and denote the resulting graph by $\Gamma^{*}$. Then by Lemma $ 4.1 $, we have $\eta(\Gamma)=\eta\left(\Gamma^{*}\right)$. We continue to delete one vertex from each $C_{4}^{i}$ which is adjacent to a vertex with degree 3 and denote the resulting signed graph by $\Gamma^{**}$. We know that $\Gamma^{**}$ is a tree with 3 leaves. By Lemma $2.8, r\left(\Gamma^{*}\right)=r\left(\Gamma^{**}\right)$. Therefore,
	$$
	\begin{aligned}
	\eta(\Gamma) &=\eta\left(\Gamma^{*}\right) 
=n\left(\Gamma^{*}\right)-r\left(\Gamma^{*}\right) =\left(n\left(\Gamma^{* *}\right)+3\right)-r\left(\Gamma^{* *}\right)=\eta\left(\Gamma^{* *}\right)+3.
	\end{aligned}
	$$
	Then, $\eta\left(\Gamma^{**}\right)=\eta(\Gamma)-3=2$. It means that $\Gamma^{**}$ is a tree with $ 3 $ leaves and $\eta\left(\Gamma^{* *}\right)=2$. Thus, $\Gamma$ is a signed graph which is obtained from a tree $T$ with $p(T)=3$ and $\eta(T)=p(T)-1=2$ by attaching a cycle with nullity $ 2 $ on each leaf of $T$.
	
	 We assume that the result holds when $c(\Gamma)<c$. Now we consider the case $c(\Gamma)=c$ $(c \geq 4)$.
	
	Let $H$ be the maximal leaf-free subgraph of $\Gamma-C$. Then $\Gamma$ can be regarded as a signed graph which is obtained from $C$ and $H$ by connecting $v_{1} \in V(C)$ and $v_{l} \in V(H)$ with a path $P\left(v_{1}, v_{2}, \cdots, v_{l}\right)(l \geq 2)$, by Lemma $ 5.3(2) $, $ l $ is even (See Fig. 4).
	
		\begin{figure}[!h]
		\centering
		\includegraphics{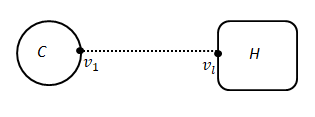}\\[1mm]
		{\small{  \bf Fig. 4~~} $\Gamma$   }
		\label{fig1}
	\end{figure}
	
Firstly, we have $\eta(H) \leq 2 c(H)-1$ by Theorem $ 3.1$. Secondly, we have $\eta(C)=\eta\left(C+\left\{v_{2}, \cdots, v_{l-1}\right\}\right)$ and $\eta\left(C-v_{1}\right)=\eta\left(\left(C-v_{1}\right)+\left\{v_{1}, \cdots, v_{l}\right\}\right)=\eta\left(C+\left\{v_{2}, \cdots, v_{l}\right\}\right)$ by applying Lemma $ 2.8$ several times. Therefore, $v_{l}$ is a cut-vertex of $\Gamma$ such that $\eta\left(C+\left\{v_{2}, \cdots, v_{l-1}\right\}\right)=\eta(C)=\eta\left(C-v_{1}\right)+1=\eta\left(C+\left\{v_{2}, \cdots, v_{1}\right\}\right)+1$. If $\eta(H) \leq 2 c(H)-2$, by Lemma $2.4(2)$, we have
	$$
	\begin{aligned}
	\eta(\Gamma) &=\eta\left(\Gamma-v_{l}\right)-1 \\
	&=\eta\left(C+\left\{v_{2}, \cdots, v_{l-1}\right\}\right)+\eta\left(H-v_{l}\right)-1 \\
	& \leq \eta(C)+(\eta(H)+1)-1 \\
	& \leq 2+(2 c(H)-2)+1-1 \\
	&=2(c(\Gamma)-1)=2 c(\Gamma)-2
	\end{aligned}
	$$
	a contradiction. Thus $\eta(H) \leq 2 c(H)-1$. 
	
	Since $c(H)=c(\Gamma)-1=c-1<c$, by the induction assumption, $H$ is a signed graph obtained from a tree $T^{\prime}$ with $p\left(T^{\prime}\right)=c-1$ and $\eta\left(T^{\prime}\right)=p\left(T^{\prime}\right)-1$ by attaching a cycle with nullity $ 2 $ on each leaf of $T^{\prime}$(See Fig. 5). It means that all $c-1$ cycles of $H$ are pendant cycles. Therefore, $\Gamma$ has at least $c-1$ pendant cycles.
	
		\begin{figure}[!h]
		\centering
		\includegraphics{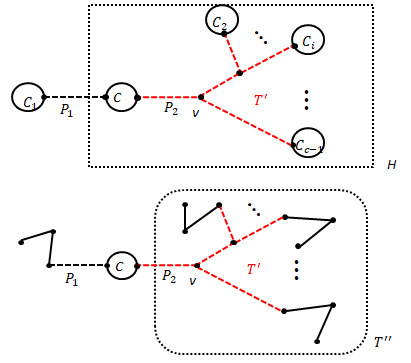}\\[1mm]
		{\small{  \bf Fig. 5                                                                                              ~~} $\Gamma$ and $ \Gamma^{* *} $  }
		\label{fig1}
	\end{figure}

	If there are exactly $c-1$ pendant cycles in $\Gamma$, then $\Gamma$ can be regarded as a signed graph in Fig. 5.
	Similarly, we replace $C_{\mathrm{i}}(i=1, \cdots, c-1)$ of $\Gamma$ by a quadrangle with nullity $ 2 $, the resulting signed graph is denoted by $ \Gamma^{*} $, then by Lemma $ 4.1 $, we have $ \eta(\Gamma^{*})= \eta(\Gamma) $. Removing one vertex from each quadrangle which is adjacent to a vertex with degree $ 3 $, the resulting graph is denoted by $\Gamma^{* *}$, then by Lemma $ 2.7$, we have $ r(\Gamma^{*})= r(\Gamma^{* *}) $. Then, $\eta\left(\Gamma^{* *}\right)=c$ by calculations. However, $\eta\left(\Gamma^{* *}\right)<c$. In fact, by Lemma $ 4.2(2) $, $v$ is a covered vertex in $T^{\prime \prime}$ and $\eta\left(T^{\prime \prime}\right)=p\left(T^{\prime \prime}\right)-1$. It follows from Lemma $ 2.6(1) $, we have $\eta\left(\Gamma^{* *}\right)=\eta\left(T^{\prime \prime}\right)+\eta\left(\Gamma^{* *}-T^{\prime \prime}\right)$. By Lemma $ 5.3(2) $, $  P_{1} $ and  $ P_{2} $ are odd in $ \Gamma^{* *}$. Then we have $\eta\left(\Gamma^{**}-T^{\prime \prime}\right)=1$ by applying Lemma $2.8$ repeatedly. Thus, $\eta\left(\Gamma^{**}\right)=\eta\left(T^{\prime \prime}\right)+\eta\left(\Gamma^{**}-T^{\prime \prime}\right)=\left(p\left(T^{\prime \prime}\right)-1\right)+1=p\left(T^{\prime \prime}\right)=c-2<c$.
	
	Therefore, all cycles of $\Gamma$ are pendant cycles, and the nullity of each cycle is $ 2 $. Similarly, we replace each cycle of $\Gamma$ by a quadrangle with nullity $ 2 $, and denote the resulting signed graph by $\Gamma^{*}$. Then by Lemma $4.1$, we have $\eta(\Gamma)=\eta\left(\Gamma^{*}\right)$. We continue to delete one vertex from each cycle which is adjacent to a vertex with degree $ 3 $ and the resulting graph is denoted by $\Gamma^{**}$. Clearly,  $\Gamma^{**}$ is a tree with $c$ leaves. By Lemma $2.7, r\left(\Gamma^{*}\right)=r\left(\Gamma^{**}\right)$. Therefore,
	$$
	\begin{aligned}
	\eta(\Gamma) &=\eta\left(\Gamma^{*}\right)=n\left(\Gamma^{*}\right)-r\left(\Gamma^{*}\right) =\left(n\left(\Gamma^{**}\right)+c\right)-r\left(\Gamma^{**}\right) =\eta\left(\Gamma^{**}\right)+c.
	\end{aligned}
	$$
	Then, $\eta\left(\Gamma^{**}\right)=\eta(\Gamma)-c=(2 c-1)-c=c-1$. It means that $\Gamma^{**}$ is a tree with $c$ leaves and $\eta\left(\Gamma^{**}\right)=c-1$. Thus, $\Gamma$ is a signed graph which is obtained from a tree $T$ with $p(T)=c$ and $\eta(T)=p(T)-1=c-1$ by attaching a cycle with nullity $ 2 $ on each leaf of $T$.
\end{proof}

Lastly, we give the characterization of the connected signed graph with nullity $ 2 c(\Gamma)+p(\Gamma)-1 $ and $c(\Gamma) \geq 1$.

\begin{Theorem}
Let $\Gamma$ be a connected signed graph with $c(\Gamma) \geq 1$. Then $\eta(\Gamma)=2 c(\Gamma)+p(\Gamma)-1$ if and only if $\Gamma$ has one of the following forms.

$ (1) $ A signed graph obtained from a tree $T$ with nullity $p(T)-1$ by attaching $c(\Gamma)$ cycles with nullity $ 2 $ on $c(\Gamma)$ leaves of $T$, where $p(T) \geq c(\Gamma)$.

$ (2) $ An $\infty$-graph $\infty(p, q, 1)$, where the nullity of each cycle of $\infty(p, q, 1)$ is $ 2 $, 

$ (3) $ $A\ \theta$-graph $\theta(p, q, l)$, where the nullity of each cycle of $\theta(p, q, l)$ is $ 2 $.	
\end{Theorem}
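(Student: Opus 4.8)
I would split the argument into sufficiency (routine) and necessity (the bulk of the work). For sufficiency, verify $\eta(\Gamma)=2c(\Gamma)+p(\Gamma)-1$ directly for each of the three forms, reusing the reductions from the proofs of Theorems $5.2$ and $5.3$. If $\Gamma$ has form (1), assembled from a tree $T$ with $\eta(T)=p(T)-1$ by attaching $c:=c(\Gamma)$ nullity-$2$ cycles at $c$ leaves of $T$, then each such leaf gains degree $3$ and no new pendant vertex is created, so $p(\Gamma)=p(T)-c$ and $2c(\Gamma)+p(\Gamma)-1=c+p(T)-1$; following the proof of Theorem $5.3$ verbatim — replace each attached cycle by a nullity-$2$ (hence positive) quadrangle (Lemma $4.1$), delete from each quadrangle one neighbour of the attachment vertex (rank-preserving by Lemma $2.7$), and strip the resulting pendant paths (Lemma $2.8$) — leaves nullity $\eta(T)=p(T)-1$; tracking orders and ranks gives $\eta(\Gamma)=\eta(T)+c=c+p(T)-1$. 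For form (2), $\infty(p,q,1)$, the common vertex $v$ is a cut-vertex with $\eta(C_p-v)=1=\eta(C_p)-1$ (each cycle being even of nullity $2$), so Lemma $2.4(1)$ gives $\eta(\Gamma)=1+\eta(C_q)=3=2c(\Gamma)+p(\Gamma)-1$. For form (3), the requirement that all cycles of $\theta(p,q,l)$ have nullity $2$ forces $p,q,l$ even (a short parity argument on the three cycle signs), so Theorem $5.2$ applies and $\eta(\Gamma)=3=2c(\Gamma)+p(\Gamma)-1$.

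\textbf{Necessity, $p(\Gamma)=0$.} Here $\Gamma$ is leaf-free and connected with $c(\Gamma)\geq1$. The case $c(\Gamma)=1$ is impossible, since a cycle has nullity $0$ or $2$ by Lemma $2.2$, never $2c(\Gamma)-1=1$. The case $c(\Gamma)=2$ is exactly Theorem $5.2$: its output $\infty(l,k,x)$ with $x$ odd and all cycles of nullity $2$ is form (2) when $x=1$ and form (1) with $T=P_x$ (an odd path satisfies $\eta(P_x)=1=p(P_x)-1$ and $p(P_x)=2\geq c(\Gamma)$) when $x\geq3$; its output $\theta(l',x',k')$ is form (3). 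The case $c(\Gamma)\geq3$ is exactly Theorem $5.3$, giving form (1) with $p(T)=c(\Gamma)$.

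\textbf{Necessity, $p(\Gamma)\geq1$.} Now $\Gamma$ is not leaf-free, so it cannot be of form (2) or (3), and I must show it has form (1); I would induct on the order of $\Gamma$, the unicyclic case being supplied by Lemma $4.8$. Pick a pendant vertex $x$ with neighbour $y$. If $d(y)=2$, with $z$ the other neighbour of $y$, then $\eta(\Gamma)=\eta(\Gamma-x-y)$ by Lemma $2.8$ while $c$ is unchanged, and comparison with Theorem $3.1$ forces $d(z)=2$ (if $d(z)\geq3$ then $\Gamma-x-y$ either contradicts $1$-deficiency or is a single nullity-$2$ cycle, in which latter case $\Gamma$ is a small graph of form (1) directly); thus $\Gamma-x-y$ is again $1$-deficient with the same $c$ and $p$ and smaller order, hence of form (1) by induction, and re-attaching the pendant path $z$--$y$--$x$ at the leaf $z$ merely lengthens one leaf of the underlying tree, which keeps it a $1$-deficient tree by Lemma $2.8$, so $\Gamma$ has form (1). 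If $d(y)\geq3$, one argues analogously by passing to $\Gamma-x$, using Lemma $2.3$ with Theorem $3.1$ to see $\Gamma-x$ is $1$-deficient, and using the covered-vertex criterion (Lemmas $2.6$ and $4.2$) to check that the reattachment edge sits at an admissible vertex of the underlying tree.

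\textbf{The pendant cycle and the main obstacle.} The configurations above cover $\Gamma$ unless no leaf-peeling reduces it; then $\Gamma$ must contain a pendant cycle. By Lemma $4.10$ every cycle of $\Gamma$ has nullity $2$, and if some block of $\Gamma$ is a cycle then Lemma $4.9(2)$ makes it pendant. The step I expect to be the main obstacle is excluding the alternative that every cyclic block of $\Gamma$ has cyclomatic number $\geq2$: one must show, using Theorems $5.1$--$5.3$ together with the coalescence/path Lemmas $4.4$--$4.6$ and $5.1$ along the block-tree of $\Gamma$, that such a block would have to be a $\theta$-graph, for which $\eta(\theta-v)=2$ by Lemma $5.2$, and that coalescing a $\theta$-graph with any non-trivial remainder makes the whole graph $2^{+}$-deficient, contradicting $1$-deficiency. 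Once a nullity-$2$ pendant cycle $C$ with major vertex $v$ is produced, $v$ is a cut-vertex, $\Gamma_1:=\Gamma-(C-v)$ has $c(\Gamma_1)=c(\Gamma)-1$, and $\eta(C-v)=\eta(C)-1$ gives $\eta(\Gamma)=\eta(\Gamma_1)+1$ via Lemma $2.4(1)$. If $v$ were not pendant in $\Gamma_1$ this would force $\eta(\Gamma_1)=2c(\Gamma_1)+p(\Gamma_1)$, hence $\Gamma_1$ a single nullity-$2$ cycle by Theorem $5.1$ and $\Gamma\cong\infty(m,k,1)$, contradicting $p(\Gamma)\geq1$; so $v$ is a leaf of $\Gamma_1$, which is therefore $1$-deficient, connected, with $c(\Gamma_1)=c(\Gamma)-1\geq1$ and $p(\Gamma_1)\geq1$, hence of form (1) by induction with $v$ an unused leaf of its underlying tree $T$. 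Re-attaching $C$ at $v$ then exhibits $\Gamma$ as obtained from the tree $T$ (with $\eta(T)=p(T)-1$ and $p(T)\geq c(\Gamma)$) by attaching $c(\Gamma)$ nullity-$2$ cycles at leaves, i.e. as form (1), completing the induction.
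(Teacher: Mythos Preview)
Your sufficiency argument and the $p(\Gamma)=0$ necessity are correct; reducing the leaf-free case directly to Theorems $5.2$ and $5.3$ is clean (and slightly tidier than the paper, which treats $p=0$ and $p\ge1$ uniformly).

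For $p(\Gamma)\geq 1$, however, the leaf-peeling induction has a genuine gap in the $d(y)\geq 3$ branch. You pass to $\Gamma-x$, declare it $1$-deficient, and then say the reattachment of $x$ at $y$ ``sits at an admissible vertex of the underlying tree'' by the covered-vertex criterion. Two problems. First, when $p(\Gamma)=1$ the graph $\Gamma-x$ is leaf-free and could a priori be of form $(2)$ or $(3)$, or leaf-free of form $(1)$; each of these does lead to a contradiction via $\eta(\Gamma)=\eta((\Gamma-x)-y)$ together with Lemma $5.2$ or Theorem $3.1$, but you do not carry this out. Second, and more seriously, when $\Gamma-x$ has form $(1)$ with underlying tree $T'$, the vertex $y$ may lie on one of the attached cycles (at a non-attachment vertex), in which case $y\notin V(T')$ and there is no candidate tree $T$ for $\Gamma$ to which Lemmas $2.6$ or $4.2$ could apply. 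You would need to show that this position of $y$ contradicts $1$-deficiency; that is a real case analysis, not a one-line appeal to the covered-vertex criterion. The transition sentence ``unless no leaf-peeling reduces it'' is also incoherent: with $p(\Gamma)\geq 1$ there is always a leaf to peel, so it is unclear when you intend to switch to the pendant-cycle argument.

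Your final paragraph, on the other hand, is essentially the paper's proof, and it suffices on its own without any leaf-peeling. The paper's organization: take the block $B$ of maximal cyclomatic number; Lemma $4.6$ forces $B$ to be $1$-deficient, so $\eta(B)=2c(B)-1$, and since a block has no cut-vertex Lemma $5.3(1)$ gives $c(B)\leq 2$. If $c(B)=2$ then $B=\theta(k,l,m)$, and the computation you outline (Lemma $5.2$ gives $\eta(B-x)=2$, then Lemma $2.4(1)$ and Theorem $3.1$ force $\Gamma=B$) is exactly the paper's Case~1. If $c(B)=1$ then every cyclic block is a cycle; Lemma $4.9$ yields either $\Gamma=\infty(p,q,1)$ or vertex-disjoint cycles, and in the latter case one inducts on $c(\Gamma)$ by shrinking a nullity-$2$ pendant cycle via Lemma $2.4(1)$ --- precisely your pendant-cycle paragraph. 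So the fix is structural: discard the leaf-peeling, promote your ``main obstacle'' and pendant-cycle arguments to the main line, and start from the maximal block rather than from a pendant vertex.
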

\begin{proof}
We first prove the sufficiency part. Signed graphs of form $ (2) $ or $ (3) $ have been proved in Theorem $ 5.2$. As for signed graphs of form $ (1) $, we proceed by induction on $c(\Gamma)$ to prove $\Gamma$ is $ 1- $deficient. When $c(\Gamma)=1$, it has been proved by Lemma $ 4.8 $. Suppose the result holds for connected signed graphs with $k$ elementary cycles, while $c(\Gamma)=k+1$. Let $C_{m}$ be a cycle with nullity $ 2 $ attached at a leaf of $T$. Let $ v $ be the vertex lying on $C_{m}$ and $ d_{\Gamma}(v)=3 $, so we have $ 1=\eta(C_{m}-v)=\eta(C_{m})-1 $ by Lemma $ 2.1 $ and $ 2.2 $, further by Lemma $2.4(1)$,  $ \eta(\Gamma)=\eta\left(\Gamma^{\prime}\right)+1$, where $\Gamma^{\prime}$ is the signed graph by shrinking $C_{m}$ into a vertex. Now $\Gamma^{\prime}$ is obtained by attaching $k$ cycles with nullity $ 2 $ at $k$ leaves of $T$, the induction hypothesis implies that $\Gamma^{\prime}$ is $ 1- $deficient. Observe that $p\left(\Gamma^{\prime}\right)=p(\Gamma)+1$, we have	
$$
\eta(\Gamma)=\eta\left(\Gamma^{\prime}\right)+1=\left[2 c\left(\Gamma^{\prime}\right)+p\left(\Gamma^{\prime}\right)-1\right]+1=2 c(\Gamma)+p(\Gamma)-1,
$$
which proves that $\Gamma$ is $ 1- $deficient.

Now we will prove the necessity part. Let $B$ be a block of $\Gamma$ such that $c(B) \geq c\left(B^{\prime}\right)$ for any block $B^{\prime}$ of $\Gamma$, then $B$ is $ 1- $deficient, otherwise, $\Gamma$ is $ 2^{+}- $deficient by Lemma $ 4.6$. Since $ B $ is $ 1- $deficient and  is a block, so $ \eta(B)=2c(B)-1 $, then by Lemma $5.3$, $1 \leq c(B) \leq 2$. We consider the following two cases depending on this.

\textbf{ Case 1.} $c(B)=2$.

Since $ B $ is a block of $\Gamma$, $B=\theta(k, l, m)$. By Lemma $ 4.10$, the nullity of each cycle is $ 2 $. Now we will prove $\Gamma=B$. Indeed, if $B$ is a proper subgraph of $\Gamma$, $ \Gamma$ has a cut-vertex $x$ which lies on $ B $. Note that we have $\eta(B-x)=2$ by Lemma $ 5.2$. Let $Q^{\prime}$ be a component of $\Gamma-x$ such that $V\left(Q^{\prime}\right) \cap V(B)=\emptyset$ and let $Q=Q^{\prime}+x$. Now, let $\Gamma^{\prime}$ be the subgraph induced by the vertices of $B$ and $Q$. If we can prove $\Gamma^{\prime}$ is $2^{+}$-deficient, then so is $\Gamma$ (by Lemma $ 4.6 $), and thus we derive a contradiction. Since $ 2=\eta(B-x)=\eta(B)-1 $ and $ x $ is a cut-vertex of $\Gamma^{\prime}$, by Lemma $2.4(1)$, we have $\eta\left(\Gamma^{\prime}\right)=\eta(Q)+\eta(B-x)=\eta(Q)+2$. If $Q$ is a cycle, then $\eta(Q) \leq 2$ by Lemma $ 2.2 $ and $ c\left(\Gamma^{\prime}\right)=3 $, thus $\eta\left(\Gamma^{\prime}\right) \leq 4=2 c\left(\Gamma^{\prime}\right)+p\left(\Gamma^{\prime}\right)-2$. Otherwise, according to  $\eta(Q) \leq 2 c(Q)+p(Q)-1$, $c(Q)=c\left(\Gamma^{\prime}\right)-2$, and $p(Q) \leq p\left(\Gamma^{\prime}\right)+1$ we have
$$
\eta\left(\Gamma^{\prime}\right)=\eta(Q)+2 \leq[2 c(Q)+p(Q)-1]+2 \leq 2 c\left(\Gamma^{\prime}\right)+p\left(\Gamma^{\prime}\right)-2.
$$
Hence, $\Gamma=B=\theta(k, l, m)$, and the nullity of each cycle is $ 2 $.

\textbf{ Case 2.} $c(B)=1$.

In this case, each block of $\Gamma$ contains at most one cycle. Therefore any two cycles share at most one common vertex, otherwise there exists a block with at least $ 2 $ cycles. If there are two cycles sharing a common vertex, then by Lemma $4.9$ we know $\Gamma$ has form $ (2) $.

Next, we assume that $\Gamma$ contains $c(\Gamma)$ vertex-disjoint cycles. By Lemma $ 4.10 $, the nullity of each cycle of $\Gamma$ is $ 2 $. Now we proceed by induction on $c(\Gamma)$ to prove that $\Gamma$ has form $(1)$. If $c(\Gamma)=1$, the result has been proved by Lemma $ 4.8$. Suppose the result holds for signed graphs with $k(\geq 1)$ disjoint cycles, while $c(\Gamma)=k+1$. Let $C_{m}$ be one of the cycles with nullity $ 2 $ and $\Gamma^{\prime}$ be the signed graph obtained by shrinking the cycle into a vertex. Since $\eta\left(\Gamma^{\prime}\right)=\eta(\Gamma)-1$ (by Lemma $ 2.4(1)$), $c\left(\Gamma^{\prime}\right)=c(\Gamma)-1$, and $p\left(\Gamma^{\prime}\right)=p(\Gamma)+1$, we have 
	$$
\begin{aligned}
\eta(\Gamma^{\prime}) &=\eta\left(\Gamma\right)-1 \\
&=2c\left(\Gamma\right)+p(\Gamma)-1-1 \\
&=2(c\left(\Gamma^{\prime}\right)+1)+(p\left(\Gamma^{\prime}\right)-1)-2\\
&=2c(\Gamma^{\prime})+p\left(\Gamma^{\prime}\right)-1,
\end{aligned}
$$
that is, $\Gamma^{\prime}$ is also $ 1- $deficient. The induction hypothesis implies that $\Gamma^{\prime}$ is obtained from a tree $T$ with $\eta(T)=p(T)-1$ by attaching $k$ pendant cycles with nullity $ 2 $ at $k$ leaves of $T$. Thus $\Gamma$ is obtained from $T$ by attaching $k+1$ pendant cycles with nullity $ 2 $ at $k+1$ leaves of $T$, where $c(\Gamma)=k+1 \leq p(T)$.
\end{proof}
%\section{Application}

%By lemma 2.1 and 2.2, theorem 4.1 and 4.3, for an arbitrary integer $ N\geq0 $, we can construct a signed graph with nullity $ N $.

\end{document}